\newtheorem{thm}{Theorem}[section]
\newtheorem{lemma}[thm]{Lemma}
\newtheorem{rem}[thm]{Remark}
\newtheorem{example}[thm]{Example}
\newcommand{\V}[1]{\mbox{\boldmath $ #1 $}}
\newcommand{\tr}[1]{\text{tr} #1}
\newcommand{\M}[1]{\mathbb{M} #1}
\newcommand{\FMF}[1]{\left( (F_K')^T\mathbb{M}_K F_K'\right) #1}
\newcommand{\dFMF}[1]{\det\left( (F_K')^T\mathbb{M}_K F_K'\right) #1}
\newcommand{\FMFinv}[1]{\left(\left( (F_K')^T\mathbb{M}_K F_K'\right)^{-1}\right) #1}
\newcommand{\T}[1]{\mathcal{T} #1}
\theoremstyle{definition}
\def \x{\bm{x}}
\def \J{\mathbb{J}}
\def \R{\mathbb{R}}
\theoremstyle{definition}
\newcommand{\bey}{\begin{eqnarray}}
\newcommand{\eey}{\end{eqnarray}}
\newcommand{\beq}{\begin{equation}}
\newcommand{\eeq}{\end{equation}}
\numberwithin{equation}{section} \topmargin=-2cm \oddsidemargin=1cm
\begin{document}
%\linenumbers
\baselineskip=2pc

\title{\textbf{A unifying moving mesh method for curves, surfaces, and domains based on mesh equidistribution and alignment}}

 \author{
 Min Zhang\footnote{National Engineering Laboratory for Big Data Analysis and Applications, Peking University, Beijing, 100871, China.~Chongqing Research Institute of Big Data, Peking University, Chongqing, 401121, China.  Email: minzhang@pku.edu.cn.}
 ~and~
 Weizhang Huang\footnote{Department of Mathematics, University of Kansas, Lawrence, Kansas 66045, USA. E-mail: whuang@ku.edu.}
 }
%Corresponding author.

\date{}
\maketitle
\begin{abstract}
A unifying moving mesh method is developed for general $m$-dimensional geometric objects
in $d$-dimensions ($d \ge 1$ and $1\le m \le d$) including curves, surfaces, and domains.
The method is based on mesh equidistribution and alignment and
does not require the availability of an analytical parametric representation of the underlying geometric object.
Mathematical characterizations of shape and size of $m$-simplexes and properties of corresponding edge matrices
and affine mappings are derived. The equidistribution and alignment conditions
are presented in a unifying form for $m$-simplicial meshes.
The equation for mesh movement is defined based on the moving mesh PDE approach,
and suitable projection of the nodal mesh velocities is employed to ensure the mesh points stay on the underlying geometric object.
The analytical expression for the mesh velocities is obtained in a compact matrix form.
The nonsingularity of moving meshes is proved.
Numerical results for curves ($m=1$) and surfaces ($m=2$) in two and three dimensions
are presented to demonstrate the ability of the developed method to move mesh points without causing singularity
and control their concentration.
\end{abstract}

\noindent\textbf{The 2020 Mathematics Subject Classification:} 65M50, 65N50

\vspace{5pt}

\noindent\textbf{Keywords:}
Unifying method for mesh movement, Moving mesh PDE, Mesh nonsingularity, Equidistribution, Alignment

\normalsize \vskip 0.2in
\newpage

\section{Introduction}
\label{sec:intr}

We are interested in the development of a unifying moving mesh method that can be used
to move simplicial meshes on a general bounded $m$-dimensional geometric object $S$ in $\R^d$ ($d \ge 1$ and $1\le m \le d$)
with or without an analytical parametric representation. The so-developed method is useful for
the computation of the evolution of $S$ and the numerical solution of partial differential equations (PDEs) defined on $S$.
Notice that $S$ can be a domain ($m = d$), a curve ($m=1 < d$), or a surface ($m=2 < d$) in $\mathbb{R}^d$.
Moreover, the method does not require the availability of an analytical parametric representation of $S$.
Generally speaking, it needs to use the information of the normal/tangent vector to ensure the mesh points stay on $S$.
The curvature of $S$ (for $m<d$) is also needed if we want to control mesh concentration based on the curvature.
These information can be obtained from a mesh that presents $S$ reasonably accurately.
Mesh examples are shown in Fig.~\ref{Fig:exd3} for a torus curve and a torus surface. The initial meshes are nonuniform, generated
with random perturbations to the location of mesh points.
The final meshes are obtained by applying the unifying moving mesh method to be presented in this work with the geometric information of normal/tangent to $S$ computed from the initial meshes and with a mesh concentration control in an attempt to make the mesh more uniform.
 \begin{figure}[H]
 \centering
\subfigure[\scriptsize{Torus curve: initial nonuniform mesh}]{
\includegraphics[width=0.36\textwidth, trim=80 0 90 30,clip]{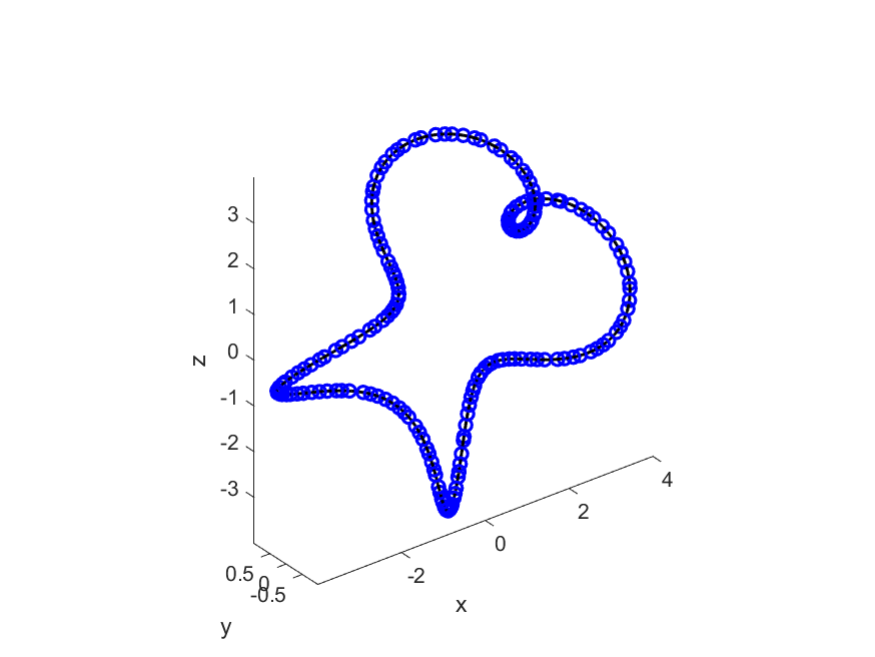}}
\hspace{1cm}
\subfigure[\scriptsize{Torus curve: final uniform mesh}]{
\includegraphics[width=0.36\textwidth, trim=80 0 90 30,clip]{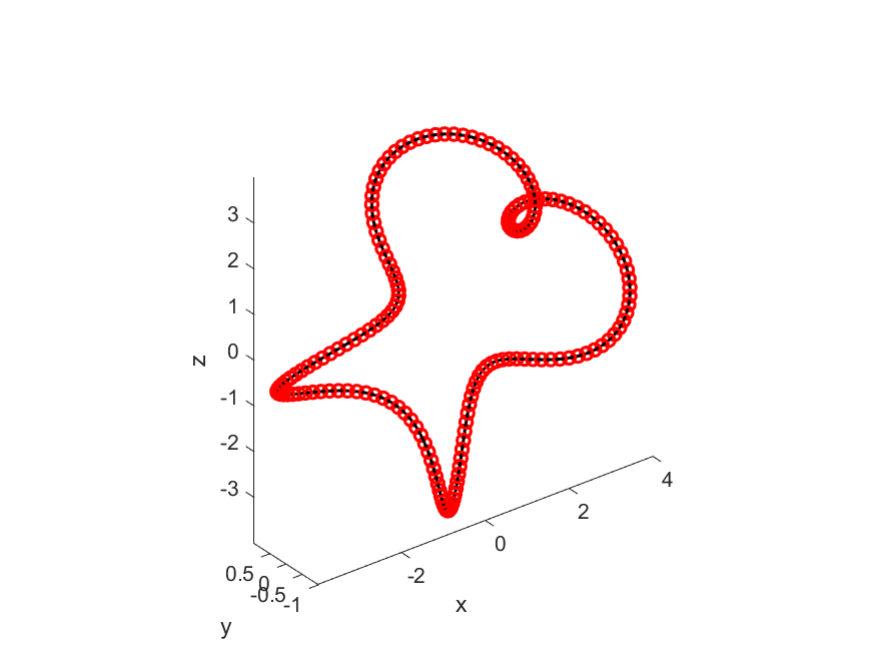}}
 \subfigure[\scriptsize{Torus surface: initial nonuniform mesh}]{
 \includegraphics[width=0.36\textwidth, trim=20 20 20 20,clip]{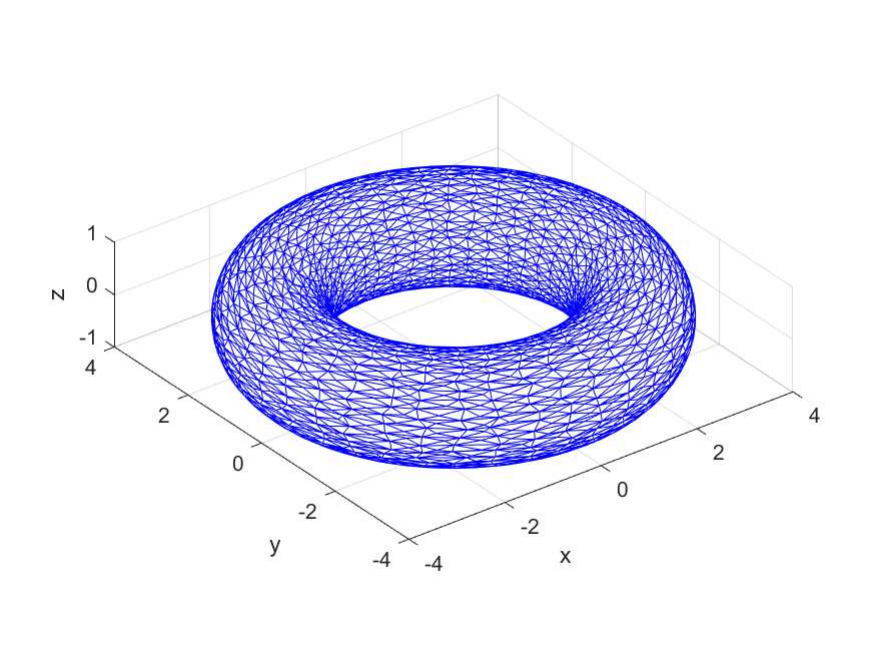}}
 \hspace{1cm}
 \subfigure[\scriptsize{Torus surface: final uniform mesh}]{
 \includegraphics[width=0.36\textwidth, trim=20 20 20 20,clip]{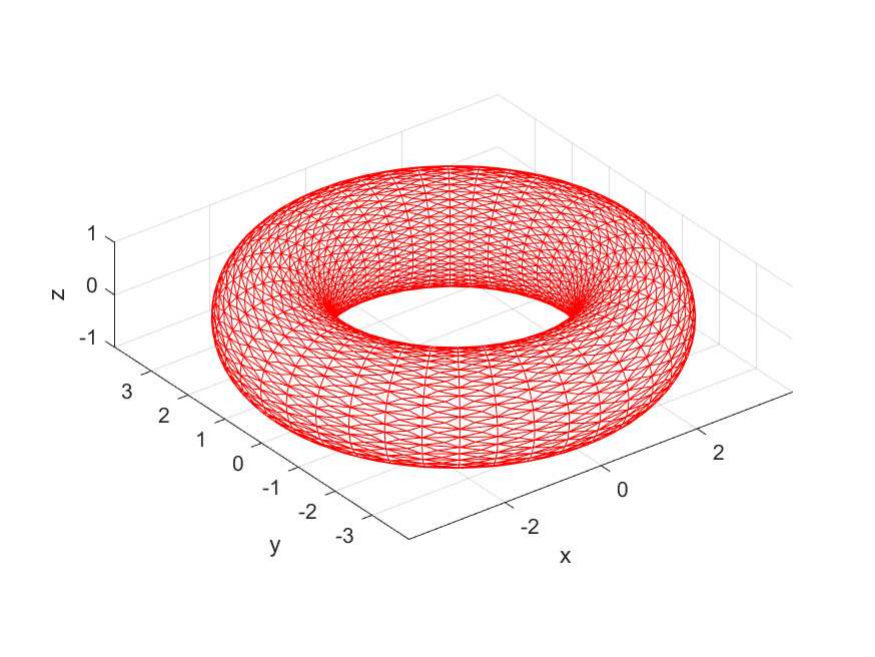}}
 \caption{
 Mesh examples for a torus curve and a torus surface in $\mathbb{R}^3$.
  }
 \label{Fig:exd3}
 \end{figure}

In the past, (adaptive) mesh movement has been studied extensively for domains ($m = d$) and various methods have been developed.
For example, Winslow \cite{W1981} proposed an equipotential method based on variable diffusion.
Brackbill and Saltzman \cite{BS1982} developed a method by combining mesh concentration, smoothness, and orthogonality.
Dvinsky \cite{D1991} proposed to use of harmonic mappings for mesh adaptation.
Huang et al. \cite{HRR1994} proposed a moving mesh PDE (MMPDE) approach
of the moving mesh method based on the equidistribution principle.
Knupp \cite{K1996} developed a functional based on the idea of conditioning the Jacobian matrix of the coordinate transformation.
Huang \cite{H2001} proposed to use mesh equidistribution and isotropy (i.e., alignment) to develop meshing functionals for
variational mesh adaptation (also see Huang and Russell \cite{HR2011-book}).
Li et al. \cite{LTZ2001} developed a moving mesh method based on harmonic maps in the framework of the Hamilton-Schoen-Yau theory \cite{D1991}.
Huang and  Kamenski \cite{HK2015}  presented a simple direct discretization for functionals used in the variational mesh generation
and adaptation. Zhang et al. \cite{M-RTE2020-cicp} developed a matrix-intersection-based MMPDE moving mesh method for radiative transfer equations.
The reader is referred to books and review articles \cite{Baines1994,BHJ2011,BHR2009,K1996,AvaryH2018,LTZ2002,HTang2006,TT2003,Tang2005,TWM1985-book,M-RTE2020,M-SWEs2021,M-SWEs2022} for moving mesh methods of domains.

On the other hand, studies of (adaptive) mesh movement for curves and surfaces are much more limited.
For example, Crestel et al. \cite{CRR} presented a moving mesh method for parametric surfaces by generalizing Winslow's meshing functional.
Browne et al. \cite{BBCW} developed a mesh adaptation method on the sphere by using a Monge-Amp\'ere type equation on the sphere.
Di et al. \cite{DLTZ2006-shpere} used a perturbed harmonic mapping moving mesh method for solving singular problems on a sphere.
MacDonald~et~al.~\cite{MMNI} developed a moving mesh method for the numerical simulation of coupled bulk-surface reaction-diffusion
equations on an evolving two-dimensional domain.
Kolasinski and Huang \cite{AvaryH2020} presented a surface-moving mesh method based on equidistribution and alignment
for general surfaces with or without explicit parameterization.
Mackenzie et al. \cite{Mackenzie-curve2019} proposed an adaptive moving mesh method for the numerical solution of a forced curve shortening geometric evolution equation.

The objective of this work is to develop a unifying moving mesh method
based on mesh equidistribution and alignment that works for general bounded $m$-dimensional geometric
objects in $\mathbb{R}^d$ ($ d \ge 1$ and $1\le m \le d$) without major modifications in its formulation.
These geometric objects include domains, surfaces, and curves. It is interesting to point out
that the unifying method reduces to the moving mesh method of \cite{H2001,HR2011-book} for bulk meshes
when $m=d$ and the surface moving mesh method of \cite{AvaryH2020} when $m = d-1$ for $d \ge 2$.
In other words, those existing bulk and surface moving mesh methods are special cases
of the unifying method. Notice that the formulations of those methods do not apply
to other cases with $ 1 \le m \le d-2$ (for $d \ge 3$) and particularly the case with curves
in three dimensions ($m = 1$ and $d = 3$).
Thus, the unifying method is useful since it covers cases that are not covered by the existing methods.
Moreover, the development of the unifying formulation can significantly simplify the theory and computer
implementation of the moving mesh method since we do not need to analyze and implement the method for each
of those cases separately.

It should be emphasized that the unifying moving mesh method does not require the availability of
an analytical parametric representation of the underlying geometry.
For the surface and curve mesh movement, the method utilizes the surface normal vectors and
curve tangent vectors to ensure that the mesh points stay on the surface and curve, respectively.
The surface normal vector (or curve tangent vector) can be computed from a mesh that represents the surface (or the curve) reasonably accurately.
Properties of edge matrices and affine mappings of $m$-simplicial elements are derived and used to characterize their size and shape.
The analytical expression for the mesh velocities is obtained in a compact matrix form.
The mesh nonsingularity of the unifying moving mesh equations is analyzed.
A selection of examples for curve mesh movement in two and three dimensions and surface mesh movement in three dimensions
are presented.

It is worth pointing out that the unifying moving mesh method is developed with the aim that it provides a mechanism for effective control of mesh concentration through a metric tensor that contains the information for the shape, size, and orientation of mesh elements.
This mesh concentration control includes the case of mesh adaptation where the metric tensor is
computed based on certain errors in some numerical approximation to the solution of some physical PDEs.
The discussion of mesh adaptation will involve physical PDEs and their discretization, which will make the current paper very long.
Moreover, mesh adaptation has been extensively studied with the bulk moving mesh method (a special case of the current method);
e.g., see  \cite{M-RTE2020,M-SWEs2021,M-SWEs2022}). For these reasons, we will not discuss mesh adaptation in this work.
Instead, we focus on the development of the unifying formulation and
the demonstration of the method's ability to move mesh points without causing singularity and control their concentration according to the uniform-mesh and curvature-based metric tensors.

This paper is organized as follows.
Properties of simplicial meshes and edge matrices are studied in \S\ref{sec:jsim}.
The equidistribution and alignment conditions for general nonuniform, simplicial bulk, surface, or curve meshes in a unifying form are presented in \S\ref{sec:M-uniform}.
The unifying moving mesh equations are established in \S\ref{sec:MMeqn}.
The theoretical analysis on the nonsingularity of the mesh trajectory for the unifying moving mesh equations is given in \S\ref{sec:mesh-non}.
Numerical examples for curves and surfaces in two and three dimensions are presented in \S\ref{sec:numerical}.
Conclusions and further remarks are given in \S\ref{sec:conclusion}.

%%%%%%%%%%%%%%%%%%%%%%%%
\section{Simplicial meshes and edge matrices}
\label{sec:jsim}

Consider a bounded, connected $m$-dimensional geometric object $S$ in $\mathbb{R}^d$, where $1\le m \le d$ and $d\ge 1$.
This object can be a curve, a surface, or a domain in $\mathbb{R}^d$.
Assume that a simplicial mesh $\mathcal{T}_h$ is given on $S$ or approximating $S$.
Let $N$ and $N_v$ be the number of elements and vertices of the mesh, respectively.
In this section, we discuss the characterization of the shape and size of $m$-dimensional simplexes in $\mathbb{R}^d$
in terms of edge matrices and affine mappings.

An $m$-dimensional simplex ($m$-simplex) $K\subset\mathbb{R}^d$ is defined as the convex hull of $(m+1)$ vertices $\bm{x}_j^K\in\mathbb{R}^d,~j=0,...,m$, i.e.,
\begin{equation}\label{chull-K}
\begin{split}
K &= \text{CHull}\left(\bm{x}_0^K,\bm{x}_1^K,...,\bm{x}_m^K\right)
=\left\{\sum_{j=0}^m\lambda_j\bm{x}_j^K:~ 0<\lambda_j<1,~\sum_{j=0}^m\lambda_j=1\right \}.
\end{split}
\end{equation}
Notice that a $0$-simplex is a point, a $1$-simplex is a line segment, a $2$-simplex is a triangle, and a $3$-simplex is a tetrahedron.
The properties of $m$-simplexes that are needed in developing algorithms for mesh generation and adaptation will be described below.

Assume a reference simplicial element $\hat{K}\subset\R^m$ has been chosen. Denote its vertices
by $\bm{\xi}_j~\in \mathbb{R}^{m},~j=0,1,...,m$.
The edge matrices $E_K$ and $\hat{E}$ for $K$ and $\hat{K}$, respectively, are defined as
\begin{equation}\label{Emat}
\begin{split}
E_K &= \left[\bm{x}_{1}^K-\bm{x}_0^K, ..., \bm{x}_m^K-\bm{x}_0^K\right],\quad \x_j^K\in\mathbb{R}^d,
\\
\hat{E}& = \left[\bm{\xi}_{1} ~- ~\bm{\xi}_0~, ~...,\bm{\xi}_m ~- \bm{\xi}_0 \right],\quad ~\bm{\xi}_j\in \mathbb{R}^{m}.
\end{split}
\end{equation}
Note that $\hat{E}$ is an $m\times m$ square matrix and its inverse exists since $\hat{K}$ is not degenerate.
On the other hand, $E_K$ has the size $d\times m$ and is not square except for the case $m = d$.

Let $F_K:\hat{K} \to K $ be the affine mapping between $\hat{K}\subset \mathbb{R}^{m}$ and
$K \subset \mathbb{R}^{d}$. Then,
\begin{align*}
&\bm{x}_j^K = F_K(\bm{\xi}_j), \quad j =0,1,..., m,
\end{align*}
or
\begin{equation*}
\bm{x}_j^K-\bm{x}_0^K = F_K'\left(\bm{\xi}_j - \bm{\xi}_0\right), \quad j = 1,...,m
\end{equation*}
where $F_K'$ is the Jacobian matrix of $F_K$.
It is not difficult to see that $F_K' = E_K\hat{E}^{-1}$, which is not square except for the case $m = d$.

Edge matrices $E_K$ and $\hat{E}$ and affine mapping $F_K$ are used to characterize the size and shape of $m$-simplicial elements.

\subsection{Properties of edge matrices}
\label{sec:propE}

Without causing confusion, we call the $m$-dimensional measure of $K$ as the size of $K$, denoted by $|K|$.
Notice that the size of $K$ can be the volume (e.g., for $m = d = 3$), area (e.g., for $m = 2$ and $d=3$),
and length (e.g., for $m = 1$ and $d=3$). Denoted the diameter, minimum height, and the in-diameter
of $K$ by $h_K$, $a_K$, and $\rho_K$, respectively.
In this subsection, we study the properties of edge matrices $E_K$ and $\hat{E}$.

\begin{lemma}
\label{volK-E}
For an $m$-simplex $K\subset\mathbb{R}^d$, the size $|K|$ for $K$ can be expressed as
\begin{equation}
\label{|K|}
|K| =\frac{1}{m!}\det\left(\left(E_K^TE_K\right)^{\frac{1}{2}}\right),
\end{equation}
where $\det (\cdot )$ denotes the determinant of a matrix.
\end{lemma}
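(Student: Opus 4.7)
The plan is to reduce the non-square situation $m < d$ to the familiar square case via the affine map $F_K$, using the Gram matrix $E_K^T E_K$ to encode the $m$-dimensional geometry. First I would observe that the columns of $E_K$ are the $m$ edge vectors $\bm{x}_j^K - \bm{x}_0^K \in \mathbb{R}^d$, so $E_K^T E_K$ is their Gram matrix: symmetric positive semi-definite in general, and (by non-degeneracy of $K$) positive definite. Hence $(E_K^T E_K)^{1/2}$ is a well-defined symmetric positive-definite $m\times m$ matrix, and $\det\!\bigl((E_K^T E_K)^{1/2}\bigr) = \sqrt{\det(E_K^T E_K)}$, so the target identity is equivalent to $|K|\,m! = \sqrt{\det(E_K^T E_K)}$.

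Next I would parametrize $K$ by the reference simplex via $F_K:\hat K \to K$, exploiting $F_K' = E_K\hat E^{-1}$. Since $F_K$ is an affine embedding of $\hat K\subset\mathbb R^m$ into $\mathbb R^d$, the induced $m$-dimensional measure on $K$ equals the constant Jacobian $\sqrt{\det\bigl((F_K')^T F_K'\bigr)}$ times Lebesgue measure on $\hat K$. Integrating the constant factor gives
\begin{equation*}
|K| \;=\; \sqrt{\det\bigl((F_K')^T F_K'\bigr)}\;|\hat K|,
\qquad\text{with}\qquad |\hat K| \;=\; \frac{1}{m!}|\det \hat E|,
\end{equation*}
the latter being the standard simplex volume formula in $\mathbb R^m$ (the square case, where $\hat E$ is invertible).

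Substituting $F_K' = E_K\hat E^{-1}$, I would compute
\begin{equation*}
(F_K')^T F_K' \;=\; \hat E^{-T}(E_K^T E_K)\hat E^{-1},
\qquad
\det\bigl((F_K')^T F_K'\bigr) \;=\; \frac{\det(E_K^T E_K)}{(\det \hat E)^2}.
\end{equation*}
Plugging this into the area formula collapses the $|\det \hat E|$ factors and yields $|K| = \frac{1}{m!}\sqrt{\det(E_K^T E_K)} = \frac{1}{m!}\det\!\bigl((E_K^T E_K)^{1/2}\bigr)$, as desired.

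The one delicate point is justifying the area-element formula $\sqrt{\det((F_K')^T F_K')}$ for an affine embedding into a higher-dimensional Euclidean space; this is where the whole lemma really lives. If one prefers not to quote the general surface-area / Cauchy–Binet formula, the cleanest workaround is to pick an orthonormal basis $Q\in\mathbb R^{d\times m}$ of the affine span of $K$, write $E_K = Q\tilde E_K$ for a square $\tilde E_K = Q^T E_K$, and apply the classical $m=m$ simplex volume formula to $\tilde E_K$; since $Q^T Q = I_m$, one has $E_K^T E_K = \tilde E_K^T\tilde E_K$ so $\det(E_K^T E_K) = (\det\tilde E_K)^2$, and the result follows. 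Everything else in the proof is bookkeeping.
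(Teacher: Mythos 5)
Your proposal is correct, and its essential content coincides with the paper's proof: your fallback argument --- factoring $E_K = Q\tilde E_K$ through an orthonormal basis $Q$ of the affine span of $K$, so that $Q^TQ=\mathbb{I}_m$ gives $E_K^TE_K=\tilde E_K^T\tilde E_K$ and the classical square-case simplex volume formula applies to $\tilde E_K$ --- is exactly the paper's QR decomposition of $E_K$ (orthogonal $Q_K$ times $R_K$ stacked on a zero block) in different notation. The preliminary detour through $F_K$, $\hat K$, and the general area-element formula $\sqrt{\det((F_K')^TF_K')}$ is an unnecessary layer that, as you yourself note, essentially presupposes the lemma; but since you supply the self-contained reduction, the proof stands.
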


\begin{proof}
The QR decomposition of $E_K \in \mathbb{R}^{d\times m}$ reads as
\begin{equation}
\begin{split}
E_K=Q_K
\begin{bmatrix*}[c]
R_K\\
\mathbf{0}
\end{bmatrix*},
\end{split}
\end{equation}
where $Q_K \in \mathbb{R}^{d\times d}$ is an orthogonal matrix, $R_K\in\mathbb{R}^{m\times m}$ is an upper triangular matrix,
and $\mathbf{0}$ is a zero-matrix $\in \mathbb{R}^{ (d-m)\times m}$.
This indicates that $K$ is formed by rotating the convex hull with the columns vectors of {\small{$\begin{bmatrix*}[c] R_K\\ \mathbf{0}\end{bmatrix*}$}} being its edge vectors.
Since rotation does not change the size of geometric objects, $K$ has the same size as the convex hull associated with
{\small{$\begin{bmatrix*}[c] R_K\\ \mathbf{0}\end{bmatrix*}$}}, with the latter lying on the $x^{(1)}-...-x^{(m)}$ -- plane
and having the same size as the convex hull formed by the column vectors of $R_K$ in $m$-dimensions. Then,
\[
\begin{split}
|K|=\text{size}(R_K)
=\frac{1}{m!}\det(R_K)
=\frac{1}{m!}\det(R_K^TR_K)^{\frac{1}{2}}.
\end{split}
\]
Moreover, we have
\[
\begin{split}
\det\left(E_K^TE_K\right)^{\frac 1 2}
&=\det\left(
\begin{bmatrix*}[c]
R_K \\
\mathbf{0}
\end{bmatrix*}^T Q_K^TQ_K
\begin{bmatrix*}[c]
R_K\\
\mathbf{0}
\end{bmatrix*}
\right)^{\frac 1 2}
\\&=\det\left(
\begin{bmatrix*}[c]
R_K^T &\mathbf{0}
\end{bmatrix*}
\begin{bmatrix*}[c]
R_K\\
\mathbf{0}
\end{bmatrix*}\right)^{\frac 1 2}
=\det\left(R_K^TR_K\right)^{\frac 1 2} .
\end{split}
\]
Combining the above results we obtain (\ref{|K|}).
\end{proof}

\vspace{8pt}

Edge matrices can be used to compute the gradient of Lagrange-type linear basis functions that are needed in the computation of the gradient of mesh energy functions (cf. \S\ref{sec:ana-V}).

\begin{lemma}
\label{lem-dphi}
Denote the Lagrange-type linear basis function associated with the vertex $\V x_j^K\in\mathbb{R}^d$ by
$\phi_j^K$ ($j = 0, ..., m$). Then, there holds
\begin{equation}\label{dphi}
\begin{split}
\begin{bmatrix*}
\frac{\partial \phi_{1}^{K}}{\partial \x}\\
 \vdots \\
 \frac{\partial \phi_{m}^{K}}{\partial \x}
\end{bmatrix*}
=(E_K^TE_K)^{-1}E_K^T,
\quad
\frac{\partial \phi_{0}^{K}}{\partial \x} = -\sum_{j=1}^m\frac{\partial \phi_{j}^{K}}{\partial \x},
\end{split}
\end{equation}
where $\frac{\partial \phi_{j}^{K}}{\partial \x}$ is a row vector of size $1$-by-$d$.
\end{lemma}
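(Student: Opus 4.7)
The plan is to identify the Lagrange linear basis functions $\phi_j^K$ with barycentric coordinates and then invert the resulting vector equation using the left pseudoinverse $(E_K^T E_K)^{-1} E_K^T$.

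First, I would recall that any point $\bm{x}$ in the affine hull of $K$ admits a unique expansion $\bm{x}=\sum_{j=0}^m \lambda_j \bm{x}_j^K$ with $\sum_{j=0}^m \lambda_j = 1$, and by definition $\phi_j^K(\bm{x})=\lambda_j$. Eliminating $\lambda_0 = 1 - \sum_{j=1}^m \lambda_j$ gives the compact relation
\[
\bm{x}-\bm{x}_0^K = \sum_{j=1}^m \phi_j^K(\bm{x})\,(\bm{x}_j^K - \bm{x}_0^K) = E_K\,\bm{\lambda},
\qquad \bm{\lambda}=(\phi_1^K(\bm{x}),\ldots,\phi_m^K(\bm{x}))^T.
\]

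Second, since the $m+1$ vertices of $K$ are affinely independent, $E_K\in\mathbb{R}^{d\times m}$ has full column rank, so $E_K^T E_K$ is invertible (the same fact already invoked in Lemma~\ref{volK-E}). Multiplying the displayed relation on the left by $E_K^T$ and then by $(E_K^T E_K)^{-1}$ isolates the coordinates:
\[
\bm{\lambda} = (E_K^T E_K)^{-1} E_K^T (\bm{x}-\bm{x}_0^K).
\]
This is the unique solution for $\bm{x}$ on the affine hull of $K$ and the Moore-Penrose extension off of it. Differentiating each component in $\bm{x}$ immediately gives the first identity in \eqref{dphi}, since $\partial \phi_j^K/\partial \bm{x}$ is the $j$-th row of the constant matrix $(E_K^T E_K)^{-1}E_K^T$. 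The second identity then follows from the partition of unity $\sum_{j=0}^m \phi_j^K \equiv 1$ upon differentiation.

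The one point that deserves care, rather than a genuine obstacle, is the meaning of $\partial/\partial \bm{x}$ when $m<d$: the basis functions live on an $m$-dimensional object embedded in $\mathbb{R}^d$, so strictly speaking the natural derivative is tangential to $K$. The pseudoinverse formula supplies a canonical extension of $\phi_j^K$ to all of $\mathbb{R}^d$ whose ambient gradient is exactly the stated row vector, and because $(E_K^T E_K)^{-1}E_K^T$ annihilates the orthogonal complement of the column space of $E_K$, this ambient gradient coincides with the tangential gradient on $K$. Thus the formula \eqref{dphi} is unambiguous and consistent with its downstream use in computing gradients of mesh energy functionals in \S\ref{sec:ana-V}.
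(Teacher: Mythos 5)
Your proposal is correct and follows essentially the same route as the paper: both start from the partition of unity and the reproduction identity $\sum_j \bm{x}_j^K\phi_j^K=\bm{x}$, reduce to $\bm{x}-\bm{x}_0^K=E_K(\phi_1^K,\dots,\phi_m^K)^T$, and invert via the left pseudoinverse $(E_K^TE_K)^{-1}E_K^T$. The only difference is that you solve for the barycentric coordinates explicitly before differentiating, which neatly sidesteps the paper's intermediate identity $E_K\,[\partial\phi_j^K/\partial\bm{x}]=\mathbb{I}_{d\times d}$ (literally valid only for $m=d$, since the left side has rank $m$) and makes the tangential-versus-ambient-gradient interpretation for $m<d$ explicit.
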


\begin{proof}
Notice that the  linear basis functions satisfy
\begin{equation}\label{linear}
\sum_{j=0}^m\phi_{j}^{K}=1
\quad \text{and}\quad \sum_{j=0}^m \x_j^K \phi_{j}^{K}=\x.
\end{equation}
Differentiating the first equation of \eqref{linear} with respect to $\V x$, we have
\[
\frac{\partial \phi_{0}^{K}}{\partial \x} = -\sum_{j=1}^m\frac{\partial \phi_{j}^{K}}{\partial \x},
\]
which gives the second equation of (\ref{dphi}).

From \eqref{linear}, we obtain
\[
\V x-\V x_{0}^K=\sum_{j=1}^m (\V x_{j}^K-\V x_{0}^K)\phi_{j}^{K}.
\]
Differentiating the above equation with respect to $\V x$ gives
\begin{equation}\label{Ephi}
\sum_{j=1}^m(\V x_{j}^K-\V x_{0}^K)\frac{\partial \phi_{i}^{K}}{\partial \V x}=\mathbb{I}_{d\times d},
\end{equation}
which can be written as
\begin{equation}\label{Ephi-1}
\left[ \x_{1}^K-\x_{0}^K,...,\x_{m}^K-\x_{0}^K \right]
\begin{bmatrix*}
\frac{\partial \phi_{1}^{K}}{\partial \x}\\
 \vdots \\
 \frac{\partial \phi_{m}^{K}}{\partial \x}
\end{bmatrix*}=
E_K\begin{bmatrix*}
\frac{\partial \phi_{1}^{K}}{\partial \x}\\
 \vdots \\
 \frac{\partial \phi_{m}^{K}}{\partial \x}
\end{bmatrix*}=\mathbb{I}_{d\times d}.
\end{equation}
Thus, we have
\[
E_K^TE_K
\begin{bmatrix*}
\frac{\partial \phi_{1}^{K}}{\partial \x}\\
 \vdots \\
 \frac{\partial \phi_{m}^{K}}{\partial \x}
\end{bmatrix*}
=E_K^T\quad\hbox{or}\quad
\begin{bmatrix*}
\frac{\partial \phi_{1}^{K}}{\partial \x}\\
 \vdots \\
 \frac{\partial \phi_{m}^{K}}{\partial \x}
\end{bmatrix*}
=(E_K^TE_K)^{-1}E_K^T,
\]
which gives the first equation of (\ref{dphi}).
\end{proof}

\vspace{8pt}

Notice that $(E_K^TE_K)^{-1}E_K^T$ is the Moore-Penrose pseudo-inverse of $E_K$, i.e.,
\begin{equation}\label{ginvEK}
E_K^{+} = (E_K^TE_K)^{-1}E_K^T .
\end{equation}
We introduce the $\bm{q}$-vectors as
\begin{equation}
\label{qv}
(E_K^{+})^T = [\bm{q}_1^K,...,\bm{q}_m^K],\quad \bm{q}_0^K = - \sum_{j=1}^{m} \bm{q}_j^K.
\end{equation}
Then, Lemma~\ref{lem-dphi} implies
\[
\bm{q}_j^K = \left ( \frac{\partial \phi_{j}^{K}}{\partial \x}\right )^T, \quad j = 0, ..., m.
\]
From this and the fact that the $j$-th facet $F_j^K$ of $K$ (the facet formed by all vertices except $\bm{x}_j^K$)
is a contour plane of $\phi_{j}^{K}$, we can obtain the following lemma.

\begin{lemma}
\label{lem-q}
For $j=0,1,..m$,
\begin{enumerate}
  \item[(i)] Vector $\bm{q}_j^K$ is an inward normal to facet $F_j^K$.
  \item[(ii)] The $j$-th height of $K$ (the distance from vertex $\bm{x}_j^K$ to facet $F_j^K$) is equal
  to $1/\|\bm{q}_j^K\|$, i.e., $a_j^K = 1/\|\bm{q}_j^K\|$.
\end{enumerate}
\end{lemma}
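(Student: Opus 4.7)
The plan is to exploit the fact that $\phi_j^K$ is a Lagrange linear basis function, so the facet $F_j^K$ is precisely the zero level set of $\phi_j^K$ while $\phi_j^K(\bm{x}_j^K)=1$. By Lemma~\ref{lem-dphi} we already know that $\bm{q}_j^K$ is (the transpose of) the gradient $\partial \phi_j^K/\partial \bm{x}$, so both assertions should follow from standard gradient-of-level-set reasoning, with a little care to account for the fact that $K$ is $m$-dimensional while sitting inside $\mathbb{R}^d$.

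For part (i), I would first note that $\phi_j^K$ is affine on the affine hull of $K$ and vanishes at every vertex $\bm{x}_i^K$ with $i\ne j$; by linearity it therefore vanishes on the entire facet $F_j^K$. Taking any two points $\bm{x},\bm{y}\in F_j^K$ and using the Moore-Penrose characterization $\bm{q}_j^K=(E_K^+)^T \bm{e}_j$ from \eqref{ginvEK}-\eqref{qv} (which forces $\bm{q}_j^K$ to lie in the column space of $E_K$, i.e.\ in the tangent space of $K$), the identity $\phi_j^K(\bm{y})-\phi_j^K(\bm{x})=(\bm{q}_j^K)^T(\bm{y}-\bm{x})=0$ shows that $\bm{q}_j^K$ is orthogonal to every edge vector of $F_j^K$. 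Hence $\bm{q}_j^K$ is normal to $F_j^K$ within the affine hull of $K$. To see it is inward, evaluate at $\bm{x}_j^K$: picking any $\bm{x}\in F_j^K$, linearity gives $(\bm{q}_j^K)^T(\bm{x}_j^K-\bm{x}) = \phi_j^K(\bm{x}_j^K)-\phi_j^K(\bm{x}) = 1>0$, so $\bm{q}_j^K$ points from the facet toward $\bm{x}_j^K$.

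For part (ii), let $\bm{x}^*\in F_j^K$ be the foot of the perpendicular from $\bm{x}_j^K$ to $F_j^K$, so that $\bm{x}_j^K-\bm{x}^* = a_j^K\, \bm{n}_j^K$ for a unit inward normal $\bm{n}_j^K$. By part (i), $\bm{q}_j^K = \|\bm{q}_j^K\|\, \bm{n}_j^K$. Then
\begin{equation*}
1 = \phi_j^K(\bm{x}_j^K) - \phi_j^K(\bm{x}^*) = (\bm{q}_j^K)^T(\bm{x}_j^K-\bm{x}^*) = \|\bm{q}_j^K\|\, a_j^K,
\end{equation*}
which rearranges to $a_j^K = 1/\|\bm{q}_j^K\|$. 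The case $j=0$ is identical, using $\bm{q}_0^K = -\sum_{i=1}^m \bm{q}_i^K = (\partial\phi_0^K/\partial\bm{x})^T$ from Lemma~\ref{lem-dphi}.

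The only subtle point, and the place I would be most careful, is the ambient versus intrinsic gradient issue: the formula in Lemma~\ref{lem-dphi} defines $\partial\phi_j^K/\partial\bm{x}$ via the pseudo-inverse of $E_K$, and when $m<d$ this is the unique gradient that lies in the tangent space of $K$ (the column space of $E_K$). This is exactly what is needed so that ``normal to $F_j^K$'' makes sense intrinsically on the $m$-dimensional element, and so that the projection argument in part (ii) takes place within the affine hull of $K$ rather than in all of $\mathbb{R}^d$.
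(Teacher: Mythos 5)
Your proof is correct and follows essentially the same route as the paper, which states the lemma without a detailed proof, remarking only that $\bm{q}_j^K=(\partial\phi_j^K/\partial\x)^T$ and that $F_j^K$ is a contour plane of $\phi_j^K$; your argument is simply a careful expansion of that observation, including the (worthwhile) check that $\bm{q}_j^K$ lies in the column space of $E_K$ so the normal and the foot-of-perpendicular computation are taken within the affine hull of $K$ when $m<d$.
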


\begin{lemma}
\label{lem-hKaK}
For any $m$-simplex $K\subset\mathbb{R}^d$, there holds
\begin{align}
& \frac{h_K^2}{m}\leq \| E_K^TE_K\| \leq m h_K^2,  \label{hk}\\
&\frac{1}{m^2a_K^2}\leq \| (E_K^TE_K)^{-1}\| \leq \frac{m}{a_K^2}, \label{aK}
\end{align}
where $a_K$ is the minimum height of $K$, i.e., $a_K = \min\limits_{0\le j \le m} a_j^K$.
\end{lemma}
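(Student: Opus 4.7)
The plan is to rewrite both norms in terms of singular values of $E_K$: since $E_K^TE_K$ is symmetric positive definite, $\|E_K^TE_K\|=\sigma_{\max}(E_K)^2$ and $\|(E_K^TE_K)^{-1}\|=\sigma_{\min}(E_K)^{-2}=\|E_K^+\|_2^2$, where $E_K^+$ is the pseudo-inverse from \eqref{ginvEK}. With this reformulation, each inequality becomes a statement relating a singular value of $E_K$ (or $E_K^+$) to a geometric quantity, which can be attacked by sandwiching the 2-norm between a single-column/row norm and the Frobenius norm.

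For \eqref{hk}, the upper bound follows directly from $\|E_K\|_2^2\le\|E_K\|_F^2=\sum_{j=1}^m\|\bm{x}_j^K-\bm{x}_0^K\|^2\le m\,h_K^2$. For the lower bound, I would pick indices $i,j$ with $\|\bm{x}_i^K-\bm{x}_j^K\|=h_K$ and write this difference as $E_K(e_i-e_j)$ (using the convention $e_0=\bm{0}$); this gives $h_K^2\le\|E_K\|_2^2\,\|e_i-e_j\|^2\le 2\|E_K^TE_K\|$, from which $h_K^2/m\le\|E_K^TE_K\|$ follows since $m\ge 2$ whenever the two vertices are both different from $\bm{x}_0^K$, and trivially for $m=1$.

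For \eqref{aK}, the key input is Lemma~\ref{lem-q}, which gives $\|\bm{q}_j^K\|=1/a_j^K$ for every $j=0,\ldots,m$. The rows of $E_K^+$ are $(\bm{q}_1^K)^T,\ldots,(\bm{q}_m^K)^T$, so the Frobenius-vs-spectral bound immediately yields $\|E_K^+\|_2^2\le\|E_K^+\|_F^2=\sum_{j=1}^m 1/(a_j^K)^2\le m/a_K^2$, proving the upper bound. For the lower bound I would use that $\|E_K^+\|_2$ dominates the norm of any row, hence $\|E_K^+\|_2\ge\max_{1\le j\le m}\|\bm{q}_j^K\|=\max_{1\le j\le m}1/a_j^K$.

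The main obstacle is the possibility $a_K=a_0^K$, since $\bm{q}_0^K$ is not a row of $E_K^+$. To handle this, I would combine the row-norm lower bound with the identity $\bm{q}_0^K=-\sum_{j=1}^m\bm{q}_j^K$ from \eqref{qv}: the triangle inequality yields $\|\bm{q}_0^K\|\le m\max_{1\le j\le m}\|\bm{q}_j^K\|\le m\|E_K^+\|_2$, so in any case $1/a_K=\max_{0\le j\le m}\|\bm{q}_j^K\|\le m\|E_K^+\|_2$, which after squaring gives exactly $1/(m^2a_K^2)\le\|(E_K^TE_K)^{-1}\|$. This factor-of-$m$ slack coming from redistributing $\bm{q}_0^K$ across the other $\bm{q}_j^K$'s is the only place where the constant $m^2$ (rather than $m$) in \eqref{aK} enters.
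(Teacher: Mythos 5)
Your proof of \eqref{aK} is correct and follows the paper's route in substance: the upper bound comes from comparing the spectral norm of $(E_K^TE_K)^{-1}=E_K^+(E_K^+)^T$ with the sum $\sum_{j=1}^m\|\bm{q}_j^K\|^2$ of squared row norms of $E_K^+$ together with Lemma~\ref{lem-q}, and the lower bound handles the possibility $a_K=a_0^K$ by expanding $\bm{q}_0^K=-\sum_{j=1}^m\bm{q}_j^K$ with the triangle inequality, which is exactly where the paper also pays the extra factor of $m$. Your Frobenius-versus-spectral comparisons are the same estimates as the paper's trace bounds, just phrased differently.

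Where you genuinely diverge is the lower bound in \eqref{hk}, and there your argument is tighter than the paper's. The paper asserts $h_K^2\le \mathrm{tr}\left(E_K^TE_K\right)=\sum_{j=1}^m\|\bm{x}_j^K-\bm{x}_0^K\|^2$, which can fail when the diameter is realized by an edge not incident to $\bm{x}_0^K$: for $\bm{x}_0^K=(0,0)$, $\bm{x}_1^K=(1,\epsilon)$, $\bm{x}_2^K=(-1,\epsilon)$ one has $h_K^2=4$ but $\mathrm{tr}\left(E_K^TE_K\right)=2+2\epsilon^2$. Only $h_K^2\le 2\,\mathrm{tr}\left(E_K^TE_K\right)$ holds in general, and routing the estimate through the trace then yields only $h_K^2\le 2m\|E_K^TE_K\|$, which misses the stated constant. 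Your device of writing the diameter edge as $E_K(e_i-e_j)$, so that $h_K^2\le \|E_K^TE_K\|\,\|e_i-e_j\|^2$ with $\|e_i-e_j\|^2\le 2$, combined with the observation that $m\ge 2$ whenever both endpoints differ from $\bm{x}_0^K$, gives $h_K^2\le m\|E_K^TE_K\|$ in every case (and equality analysis shows the constant is attained in the example above). So your proposal is not only correct but quietly repairs a small flaw in the published argument.
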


\begin{proof}
It is not difficult to show
\begin{equation}
\frac{1}{m}\tr{\left(E_K^TE_K\right)} \leq    \| E_K^TE_K\|  \leq \tr{\left(E_K^TE_K\right)} .
\end{equation}
Moreover, by direct calculation, we have
\[
\tr{\left(E_K^TE_K\right)} = \sum_{j=1}^m (\V x_j^K-\V x_0^K)^T (\V x_j^K-\V x_0^K) ,
\]
which implies
\begin{equation}
h^2_K \leq \tr{\left(E_K^TE_K\right)} \leq m h^2_K.
\end{equation}
Then, we have obtained \eqref{hk}.

Similarly, we have
\begin{equation}\label{eig-tr-inv}
\frac{1}{m}\tr{\left((E_K^TE_K)^{-1}\right)} \leq    \| (E_K^TE_K)^{-1}\|  \leq \tr{\left((E_K^TE_K)^{-1}\right)} .
\end{equation}
From the definition of $\bm{q}$-vectors and Lemma \ref{lem-q}, we have
\begin{equation}\label{ak-r}
 \frac{1}{\min\limits_{1\leq j\leq m} (a_j^K)^2}\leq \tr{\left((E_K^TE_K)^{-1}\right)} = \tr{\left( E_K^{+}(E_K^{+})^T\right)} =\sum_{j=1}^m \|\bm{q}_j^K\|^2 \leq \frac{m}{ a^2_K}.
\end{equation}
Then, we have
\begin{equation}\label{ak-r1m}
 \frac{1}{m \min\limits_{1\leq j\leq m} (a_j^K)^2}\leq \| (E_K^TE_K)^{-1}\| \leq \frac{m}{ a^2_K}.
\end{equation}
Furthermore, using the triangle inequality and Cauchy-Schwarz inequality we have
\begin{equation}
\|\bm{q}_0^K\|^2 =\|\sum_{j=1}^m \bm{q}_j^K\|^2 \leq  m \sum_{j=1}^m \| \bm{q}_j^K\|^2= m ~\tr{\left((E_K^TE_K)^{-1}\right)}\leq m^2 ~\| (E_K^TE_K)^{-1}\|.
\end{equation}
and
\begin{equation}\label{ak-r0}
\frac{1}{m^2 (a_0^K)^2} \leq \| (E_K^TE_K)^{-1}\|
\leq \frac{m}{ a^2_K}.
\end{equation}
Hence, we can obtain \eqref{aK} by combined \eqref{ak-r1m} and \eqref{ak-r0}.
\end{proof}

\subsection{Properties of affine mapping}

\begin{lemma}
\label{lem:volK}
For any $m$-simplexes $K\subset \mathbb{R}^{d}$ and $\hat{K} \subset \mathbb{R}^{m}$,
there holds
\begin{equation}\label{area-1}
\frac{|K|}{|\hat{K}|} =\det\left(\left(F_K'\right)^TF_K'\right)^{\frac{1}{2}} .
\end{equation}
\end{lemma}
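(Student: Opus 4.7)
The plan is to reduce the statement to Lemma~\ref{volK-E} applied to both $K$ and $\hat{K}$, and then use the factorization $E_K=F_K'\hat{E}$ that follows from $F_K'=E_K\hat{E}^{-1}$.

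First I would write down the two size formulas given by Lemma~\ref{volK-E}:
\[
|K|=\frac{1}{m!}\det\!\left(E_K^TE_K\right)^{\frac12},\qquad
|\hat K|=\frac{1}{m!}\det\!\left(\hat E^T\hat E\right)^{\frac12}=\frac{1}{m!}\,|\det(\hat E)|,
\]
where the second equality uses that $\hat E$ is a square $m\times m$ invertible matrix so $\det(\hat E^T\hat E)=\det(\hat E)^2$.

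Next I would substitute $E_K=F_K'\hat E$ into $E_K^TE_K$, obtaining
\[
E_K^TE_K=\hat E^T (F_K')^T F_K'\,\hat E.
\]
Taking determinants and using multiplicativity together with the fact that $\hat E$ is square yields
\[
\det\!\left(E_K^TE_K\right)=\det(\hat E)^2\,\det\!\left((F_K')^TF_K'\right),
\]
so that $\det(E_K^TE_K)^{1/2}=|\det(\hat E)|\,\det((F_K')^TF_K')^{1/2}$. Dividing by $m!|\hat K|=|\det(\hat E)|$ gives the claim.

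There is no real obstacle here: the only subtle point is that $F_K'$ itself is not square when $m<d$, so one cannot split $\det((F_K')^TF_K')$ as $\det(F_K')^2$. The argument works precisely because the square factor $\hat E$ pulls out cleanly on both sides, leaving the Gram determinant of $F_K'$ intact.
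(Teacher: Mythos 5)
Your proposal is correct and follows essentially the same route as the paper: both proofs rest on the factorization $E_K = F_K'\hat{E}$, the multiplicativity of the determinant for the square factor $\hat{E}$, and Lemma~\ref{volK-E} to convert Gram determinants into sizes. The only cosmetic difference is the direction of substitution (you insert $E_K = F_K'\hat{E}$ into $E_K^TE_K$, while the paper inserts $F_K'=E_K\hat{E}^{-1}$ into $(F_K')^TF_K'$), and your use of $|\det(\hat{E})|$ is slightly more careful than the paper's unsigned $\det(\hat{E})$.
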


\begin{proof}
Recalling $F_K' = E_K\hat{E}^{-1}$, we have
\begin{align*}
\det\left(\left(F_K'\right)^TF_K'\right)^{\frac{1}{2}}&
=\det\left(\hat{E}^{-T}E_K^TE_K\hat{E}^{-1}\right)^{\frac{1}{2}}\\
&=\det\left(\hat{E}^{-T}\right)^{\frac{1}{2}}\det\left(E_K^TE_K\right)^{\frac{1}{2}}
\det\left(\hat{E}^{-1}\right)^{\frac{1}{2}}\\
&=\det(\hat{E})^{-1}\det\left(E_K^TE_K\right)^{\frac{1}{2}}\\
&=\frac{1}{m!\, |\hat{K}|}\det\left(E_K^TE_K\right)^{\frac{1}{2}}.
\end{align*}

Therefore, %Using Lemma \ref{volK-E},
\begin{equation*}
\begin{split}
\det\left(\left(F_K'\right)^TF_K'\right)^{\frac{1}{2}}
=\frac{1}{m!\, |\hat{K}|}\det\left(E_K^TE_K\right)^{\frac{1}{2}}
= \frac{|K|}{|\hat{K}|} .
\end{split}
\end{equation*}

\end{proof}

\begin{lemma}
\label{lem-sEK}
For any $m$-simplexes $K \subset \mathbb{R}^d$ and $\hat{K}\subset \mathbb{R}^m$,  we have
\begin{align}
& \frac{h_K^2}{m \hat{h}^2}\leq \big\| (F'_K)^TF'_K\big\| \leq \frac{m h_K^2}{\hat{a}^2},
\label{s-1}\\
&\frac{\hat{a}^2}{m^2a_K^2}\leq \big\|\left ((F'_K)^TF'_K\right)^{-1}\big\| \leq \frac{m\hat{h}^2}{a_K^2},\label{s-2}
\end{align}
where $\hat{h}$ and $\hat{a}$ are the diameter and minimum height of $\hat{K}$.
\end{lemma}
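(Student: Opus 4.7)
The plan is to rewrite $(F_K')^T F_K'$ in terms of the edge matrices $E_K$ and $\hat{E}$, apply sub-multiplicativity of the spectral norm, and then invoke the estimates from Lemma~\ref{lem-hKaK} applied to both $K$ and $\hat{K}$. The key identity is $F_K' = E_K \hat{E}^{-1}$, which gives
\[
(F_K')^T F_K' = \hat{E}^{-T}\bigl(E_K^T E_K\bigr)\hat{E}^{-1},
\qquad
E_K^T E_K = \hat{E}^{T}\bigl((F_K')^T F_K'\bigr)\hat{E}.
\]
Because $\hat{E}$ is a square $m\times m$ invertible matrix, the spectral norm and its inverse satisfy the clean identifications $\|\hat{E}\|^2 = \|\hat{E}^T \hat{E}\|$ and $\|\hat{E}^{-1}\|^2 = \|(\hat{E}^T \hat{E})^{-1}\|$, which will let me transfer the edge-matrix bounds of Lemma~\ref{lem-hKaK} directly to factors involving $\hat{h}$ and $\hat{a}$.

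For the upper bound in \eqref{s-1}, I would apply sub-multiplicativity to the first identity, so that $\|(F_K')^T F_K'\| \le \|\hat{E}^{-1}\|^2\,\|E_K^T E_K\| = \|(\hat{E}^T \hat{E})^{-1}\|\,\|E_K^T E_K\|$, and then plug in the upper bound $\|E_K^T E_K\| \le m h_K^2$ from \eqref{hk} together with the upper bound $\|(\hat{E}^T\hat{E})^{-1}\|\le m/\hat{a}^2$ from \eqref{aK} (applied to $\hat{K}$ in place of $K$). For the lower bound in \eqref{s-1}, I would use the second identity and sub-multiplicativity in the reverse direction: $\|E_K^T E_K\| \le \|\hat{E}^T\hat{E}\|\,\|(F_K')^T F_K'\|$, which rearranges to $\|(F_K')^T F_K'\| \ge \|E_K^T E_K\|/\|\hat{E}^T \hat{E}\|$. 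Then invoking the lower bound $\|E_K^T E_K\| \ge h_K^2/m$ from \eqref{hk} and the upper bound $\|\hat{E}^T \hat{E}\| \le m \hat h^2$ from the same estimate applied to $\hat{K}$ yields the stated estimate.

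For \eqref{s-2}, I would repeat the argument with the two matrices swapped. Inverting the first identity gives $\bigl((F_K')^T F_K'\bigr)^{-1} = \hat{E}\,(E_K^T E_K)^{-1}\,\hat{E}^T$, and the analogous reversed inequality $\|(E_K^T E_K)^{-1}\| \le \|\hat{E}^{-1}\|^2\,\|((F_K')^T F_K')^{-1}\|$ will give the lower bound. Combining these with the bounds \eqref{hk}--\eqref{aK} from Lemma~\ref{lem-hKaK}, once for $K$ (yielding the factors of $a_K$) and once for $\hat{K}$ (yielding the factors of $\hat{h}$), produces \eqref{s-2}.

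The main obstacle is purely bookkeeping: one must carry the various factors of $m$ carefully through each application of sub-multiplicativity, and one must remember that the submultiplicative passage between $\|\hat{E}^{\pm 1}\|^2$ and the norms of $\hat{E}^T \hat{E}$ and its inverse is clean only because $\hat{E}$ is square. There is no deep difficulty beyond this; the lemma is essentially a corollary of Lemma~\ref{lem-hKaK} together with the factorization $F_K' = E_K \hat{E}^{-1}$.
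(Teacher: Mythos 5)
The paper's own ``proof'' here is a single line (``similar to that of Lemma~\ref{lem-hKaK}''), so writing out the details is worthwhile --- but your chain of inequalities does not deliver the constants the lemma asserts: every one of the four bounds comes out weaker by a factor of $m$. Take the upper bound in \eqref{s-1}. From $(F_K')^TF_K'=\hat E^{-T}(E_K^TE_K)\hat E^{-1}$ and sub-multiplicativity you get $\|(F_K')^TF_K'\|\le\|(\hat E^T\hat E)^{-1}\|\,\|E_K^TE_K\|$, and inserting $\|(\hat E^T\hat E)^{-1}\|\le m/\hat a^2$ from \eqref{aK} and $\|E_K^TE_K\|\le m h_K^2$ from \eqref{hk} gives $m^2h_K^2/\hat a^2$, not $m h_K^2/\hat a^2$. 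The same compounding occurs in the other three estimates: your route yields $h_K^2/(m^2\hat h^2)$, $\hat a^2/(m^3a_K^2)$, and $m^2\hat h^2/a_K^2$ in place of the stated $h_K^2/(m\hat h^2)$, $\hat a^2/(m^2a_K^2)$, and $m\hat h^2/a_K^2$. You identify the tracking of the factors of $m$ as the ``main obstacle'' and assert it is mere bookkeeping, but the bookkeeping fails: the two norm estimates you multiply are each already worst-case by a factor of order $\sqrt m$, and sub-multiplicativity compounds them. As written, the argument proves a correct but strictly weaker statement than the lemma.

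To recover the stated constants one must exploit the structure of $F_K'$ rather than multiply separate bounds for $E_K$ and $\hat E^{-1}$. For the lower bound in \eqref{s-1}, for example, evaluate the Rayleigh quotient of $(F_K')^TF_K'$ at the coefficient vector of the edge of $\hat K$ whose image is the diameter edge of $K$: since $F_K'$ maps edge vectors of $\hat K$ to the corresponding edge vectors of $K$, this gives $\|(F_K')^TF_K'\|\ge h_K^2/\hat h^2\ge h_K^2/(m\hat h^2)$ directly, with no loss; the lower bound in \eqref{s-2} can be treated symmetrically. The upper bounds are more delicate: even the trace route suggested by the paper (mimicking Lemma~\ref{lem-hKaK} via $\|A\|\le\tr(A)$ applied to $\tr\bigl(E_K^TE_K(\hat E^T\hat E)^{-1}\bigr)$) produces the same $m^2$ when the two factors are estimated separately, so a genuinely finer geometric estimate --- e.g.\ bounding $\|F_K'\bm u\|$ by the image of the longest chord of $\hat K$ in direction $\bm u$, whose length is at least $\hat a$ --- is needed. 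You should either supply such an argument or note explicitly that your method only establishes the inequalities with an extra factor of $m$.
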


\begin{proof}
The proof is similar to that of Lemma~\ref{lem-hKaK}.
\end{proof}

\vspace{8pt}

We now consider the mathematical characterization of similarity among simplexes regarding affine mappings.
The simplex similarity is one of the two properties defining uniform meshes.

\begin{lemma}
\label{lem:Ksim}
Any two $m$-simplexes $\hat{K}\subset\R^m$ and $K\subset\R^d$ are similar if and only if one of the following conditions holds.
\begin{enumerate}
\item[(i)] $K$ can be obtained from $\hat{K}$ by a sequence of rotation, translation, and dilation transformations.
  In this case, $\hat{K}$ is considered as an $m$-simplex embedded in $\R^d$.
\item[(ii)] The Jacobian matrix $F'_K$ can be expressed as a product of scalar and orthogonal matrices (cf. (\ref{FUIV-1})).
\item[(iii)] There exists a constant $\theta_K$ such that
{\em
     \begin{equation}\label{Keq}
    \left(F_K'\right)^TF_K'=\theta_K\mathbb{I}.
    \end{equation}
    }
\item[(iv)] $F'_K$ satisfies
{\em
   \begin{equation}
   \label{Ksim}
    \frac{1}{m}\tr\left(\left(F_K'\right)^TF_K'\right)
    =\det\left(\left(F_K'\right)^TF_K'\right)^{\frac{1}{m}},
    \end{equation}
    }
    where $\text{tr}(\cdot)$ denotes the trace of a matrix.
\item[(v)] $F'_K$ satisfies
{\em
   \begin{equation}
   \label{Ksim-2}
    \frac{1}{m}\tr\left(\left((F_K')^TF_K'\right)^{-1}\right)
    =\det\left(\left((F_K')^TF_K'\right)^{-1}\right)^{\frac{1}{m}}.
    \end{equation}
    }
\end{enumerate}
\end{lemma}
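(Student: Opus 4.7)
The plan is to establish the chain of equivalences (iii) $\Leftrightarrow$ (iv) $\Leftrightarrow$ (v) first, then close the loop with (ii) $\Leftrightarrow$ (iii) and finally (i) $\Leftrightarrow$ (ii). Throughout, the key observation is that $(F_K')^T F_K'$ is an $m \times m$ symmetric positive definite matrix because $F_K' = E_K \hat{E}^{-1}$ has full column rank $m$ (since $K$ is a nondegenerate $m$-simplex); this is the object on which all the algebraic conditions sit.

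For (iii) $\Leftrightarrow$ (iv) $\Leftrightarrow$ (v), I would use the arithmetic-geometric mean inequality applied to the eigenvalues $\lambda_1,\ldots,\lambda_m > 0$ of the SPD matrix $A = (F_K')^T F_K'$. Since $\tfrac{1}{m}\mathrm{tr}(A) = \tfrac{1}{m}\sum_i \lambda_i$ and $\det(A)^{1/m} = (\prod_i \lambda_i)^{1/m}$, equality in AM-GM occurs if and only if all $\lambda_i$ are equal, i.e. $A = \theta_K \mathbb{I}$ for some $\theta_K > 0$. This proves (iii) $\Leftrightarrow$ (iv) directly. The same argument applied to $A^{-1}$ (whose eigenvalues are $1/\lambda_i$) gives (iii) $\Leftrightarrow$ (v), since $A^{-1} = \mu \mathbb{I}$ iff $A = \mu^{-1} \mathbb{I}$. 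This step is essentially routine once the SPD structure is noted.

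For (ii) $\Leftrightarrow$ (iii), I would use the thin SVD of $F_K' \in \mathbb{R}^{d\times m}$, namely $F_K' = U_K \Sigma_K V_K^T$ with $U_K \in \mathbb{R}^{d \times m}$ having orthonormal columns, $V_K \in \mathbb{R}^{m \times m}$ orthogonal, and $\Sigma_K = \mathrm{diag}(\sigma_1,\ldots,\sigma_m)$. Condition (iii) says $V_K \Sigma_K^2 V_K^T = \theta_K \mathbb{I}$, i.e. $\sigma_1 = \cdots = \sigma_m = \sqrt{\theta_K}$, so that $F_K' = \sqrt{\theta_K}\, U_K V_K^T$, which is exactly the scalar-times-orthogonal form in (\ref{FUIV-1}) (extending $U_K$ to a $d \times d$ orthogonal matrix if needed). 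The converse is a direct verification.

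For (i) $\Leftrightarrow$ (ii), the direction (i) $\Rightarrow$ (ii) follows by writing each of the building-block transformations (rotation, translation, dilation) in coordinates and noting that the Jacobian of the composite affine map is a scalar multiple of a product of orthogonal matrices, while translations contribute nothing to the Jacobian. For (ii) $\Rightarrow$ (i), I would read off the SVD-type factorization: the orthogonal factors realize rotations (in $\mathbb{R}^m$ before embedding and in $\mathbb{R}^d$ after), the scalar factor $\sqrt{\theta_K}$ is the dilation, and the constant $\bm{x}_0^K - F_K'\bm{\xi}_0$ supplies the translation. I expect this last equivalence to be the main obstacle, since it requires carefully reconciling the geometric notion of similarity with the fact that $\hat{K}$ lives in $\mathbb{R}^m$ while $K$ lives in $\mathbb{R}^d$; the embedding has to be handled by padding $U_K$ with extra orthonormal columns so that the whole map is realized in $\mathbb{R}^d$. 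Once this embedding is made precise, the cycle of equivalences closes.
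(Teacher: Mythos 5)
Your proposal is correct and follows essentially the same route as the paper's proof: the singular value decomposition of $F_K'$ links (i), (ii), and (iii), and the equality case of the arithmetic--geometric mean inequality applied to the eigenvalues of $(F_K')^TF_K'$ (and its inverse) handles (iv) and (v). Your write-up is in fact more careful than the paper's terse argument, particularly in making explicit the padding of the thin SVD factor to a full $d\times d$ orthogonal matrix and the treatment of the translation term, but the underlying ideas are identical.
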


\begin{proof}
(i) is obvious in geometry. From the singular value decomposition and (i),
$F_K'\in \mathbb{R}^{d\times m}$ can be expressed as
\begin{equation}
F_K' = \sqrt{\theta_K} \, U \begin{bmatrix*}[c]
\mathbb{I}\\
\mathbf{0}
\end{bmatrix*} V^T,
\label{FUIV-1}
\end{equation}
where $\theta_K$ is a constant representing dilation, $U \in \mathbb{R}^{d\times d}$ and $V\in\mathbb{R}^{m\times m}$
are orthogonal matrices representing rotation. Thus, we obtain (ii). Notice that this is equivalent to
\begin{equation*}
(F_K')^T F_K' = \theta_K^2 \mathbb{I},
\end{equation*}
which gives (iii). It also states that the eigenvalues of $(F_K')^T F_K'$ are equal.
(iv) and (v) are equivalent statements of the equality of the eigenvalues.
\end{proof}

\subsection{Simplexes in Riemannian metric}

In this subsection, we extend the results in the previous subsection to the situation
with the Riemannian metric prescribed by a metric tensor $\mathbb{M}=\mathbb{M}(\bm{x})$ defined on $S$.

We consider a symmetric and uniformly positive definite metric tensor $\mathbb{M}(\bm{x})$ which satisfies
\begin{equation}\label{Mbound}
\underline{\rho} \mathbb{I}\le\mathbb{M}(\bm{x}) \le \overline{\rho} \mathbb{I}, \quad \forall \bm{x}\in S\subset \mathbb{R}^d
\end{equation}
where $\underline{\rho}$ and $\overline{\rho}$ are two positive constants and
the less-than-or-equal-to sign is in the sense of negative semi-definiteness.  We define
the average of $\mathbb{M}$ over $K$ as
\[
\mathbb{M}_K=\frac{1}{|K|}\int_{K}\mathbb{M}(\bm{x})d\bm{x}.
\]
Recall that the distance measure in the Riemannian metric $\mathbb{M}_K$ is defined as
\begin{equation}\label{Mnorm}
\|\bm{x}\|_{\mathbb{M}_K}=\sqrt{\bm{x}^T\mathbb{M}_K\bm{x}}
=\sqrt{\left(\mathbb{M}_K^{\frac{1}{2}}\bm{x}\right)^T
\left(\mathbb{M}_K^{\frac{1}{2}}\bm{x}\right)}
=\left\|\mathbb{M}_K^{\frac{1}{2}}\bm{x}\right\|,
\end{equation}
where $\| \cdot \|$ denotes the standard Euclidean norm.  This implies that the geometric properties of $K$ in the metric $\mathbb{M}_K$ can be obtained from those of $\mathbb{M}_K^{\frac{1}{2}}K$ in the Euclidean metric.
For example, the edge matrix of $K$ in the metric $\mathbb{M}_K$ is given by
\begin{equation}\label{EVinM}
 E_{K,\mathbb{M}} = \mathbb{M}_K^{\frac{1}{2}} E_{K},% \in \mathbb{R}^{d\times m},
 \end{equation}
and the Jacobian matrix of the affine mapping from $\hat{K}\subset \mathbb{R}^{m}$ to $\mathbb{M}_K^{\frac{1}{2}}K$ is given by
\begin{equation}\label{JinM}
F'_{K,\mathbb{M}}=E_{K,\mathbb{M}}\hat{E}^{-1} =\left(\mathbb{M}_K^{\frac{1}{2}}E_K\right) \hat{E}^{-1}=\mathbb{M}_K^{\frac{1}{2}} F_K'.% \in \mathbb{R}^{d\times m} .
\end{equation}
Note that $E_{K,\mathbb{M}}$ and $F_{K,\mathbb{M}}'$ are matrices of size $d\times m$.
Moreover, the $\bm{q}$-vectors of $K$ in the metric $\mathbb{M}_K$ are given by the pseudo-inverse $E_{K,\mathbb{M}}^{+}$, i.e.,
\begin{equation}
\label{qv-M}
\begin{split}
& [\bm{q}_{1,\mathbb{M}},...,\bm{q}_{m,\mathbb{M}}]
= (E_{K,\mathbb{M}}^{+})^T
=\mathbb{M}_K^{-\frac{1}{2}} (E_K^{+})^T
= \mathbb{M}_K^{-\frac{1}{2}} [\bm{q}_{1},...,\bm{q}_{m}],\\
& \bm{q}_{0,\mathbb{M}} = - \sum_{j=1}^{m} \bm{q}_{j,\mathbb{M}}
= - \sum_{j=1}^{m} \mathbb{M}_K^{-\frac{1}{2}}\bm{q}_{j}.
\end{split}
\end{equation}

We list some other results in the following lemmas without proof.

\begin{lemma}
\label{lem:volK-M}
Denote the size of $K$ in the metric $\mathbb{M}_K$ by $|K|_{\mathbb{M}}$. There holds
\begin{align}
&\frac{|K|_{\mathbb{M}}}{|\hat{K}|}=\det\FMF^{\frac{1}{2}},
\label{|K|-M-1}\\
& |K|_{\mathbb{M}} = \frac{1}{m!}\det\left(E_K^{T}\mathbb{M}_KE_K\right)^\frac{1}{2}.
\label{|K|-M-2}
\end{align}
\end{lemma}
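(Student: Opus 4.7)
\proof
The plan is to reduce both identities to their Euclidean counterparts (Lemmas~\ref{volK-E} and \ref{lem:volK}) by exploiting the isometry $\bm{x}\mapsto \mathbb{M}_K^{1/2}\bm{x}$ between $(\mathbb{R}^d,\|\cdot\|_{\mathbb{M}_K})$ and $(\mathbb{R}^d,\|\cdot\|)$ implied by \eqref{Mnorm}. Since $\mathbb{M}_K$ is constant on $K$, the map sends the simplex $K$ to the Euclidean simplex $\mathbb{M}_K^{1/2}K$, and by the very definition of $|K|_{\mathbb{M}}$ as the $m$-dimensional measure of $K$ in the metric $\mathbb{M}_K$, one has $|K|_{\mathbb{M}} = |\mathbb{M}_K^{1/2}K|$.

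First I would prove \eqref{|K|-M-2}. The edge matrix of the Euclidean simplex $\mathbb{M}_K^{1/2}K$ is precisely $E_{K,\mathbb{M}} = \mathbb{M}_K^{1/2}E_K$ by \eqref{EVinM}. Applying Lemma~\ref{volK-E} to $\mathbb{M}_K^{1/2}K$ immediately gives
\[
|K|_{\mathbb{M}} \;=\; |\mathbb{M}_K^{1/2}K| \;=\; \frac{1}{m!}\det\!\left(E_{K,\mathbb{M}}^{T}E_{K,\mathbb{M}}\right)^{\frac{1}{2}} \;=\; \frac{1}{m!}\det\!\left(E_K^{T}\mathbb{M}_K E_K\right)^{\frac{1}{2}},
\]
which is \eqref{|K|-M-2}.

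Next I would prove \eqref{|K|-M-1} along the same lines. The Jacobian of the affine map $\hat{K}\to \mathbb{M}_K^{1/2}K$ is $F'_{K,\mathbb{M}} = \mathbb{M}_K^{1/2}F_K'$ by \eqref{JinM}. Applying Lemma~\ref{lem:volK} to the pair $(\hat{K},\mathbb{M}_K^{1/2}K)$ yields
\[
\frac{|K|_{\mathbb{M}}}{|\hat{K}|} \;=\; \frac{|\mathbb{M}_K^{1/2}K|}{|\hat{K}|}
\;=\; \det\!\left((F'_{K,\mathbb{M}})^T F'_{K,\mathbb{M}}\right)^{\frac{1}{2}}
\;=\; \det\!\left((F_K')^T \mathbb{M}_K F_K'\right)^{\frac{1}{2}},
\]
establishing \eqref{|K|-M-1}.

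There is no real obstacle here: the only conceptual point that needs a sentence of justification is the identification $|K|_{\mathbb{M}} = |\mathbb{M}_K^{1/2}K|$, which follows from the fact that $\mathbb{M}_K$ is a \emph{constant} symmetric positive definite matrix on $K$ (being the average of $\mathbb{M}(\bm{x})$ over $K$), so that $\mathbb{M}_K^{1/2}$ is a genuine linear isometry between the metrics on $\mathbb{R}^d$. Once this is noted, both identities are direct corollaries of Lemmas~\ref{volK-E} and~\ref{lem:volK}.
\endproof
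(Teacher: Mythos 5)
Your proof is correct, and it is precisely the argument the paper intends: the lemma is stated ``without proof'' immediately after the paper introduces $E_{K,\mathbb{M}}=\mathbb{M}_K^{1/2}E_K$ and $F'_{K,\mathbb{M}}=\mathbb{M}_K^{1/2}F_K'$ and observes via \eqref{Mnorm} that the geometry of $K$ in the metric $\mathbb{M}_K$ is that of $\mathbb{M}_K^{1/2}K$ in the Euclidean metric. Reducing \eqref{|K|-M-2} to Lemma~\ref{volK-E} and \eqref{|K|-M-1} to Lemma~\ref{lem:volK} is exactly the intended route, and your explicit justification of $|K|_{\mathbb{M}}=|\mathbb{M}_K^{1/2}K|$ (using that $\mathbb{M}_K$ is a constant SPD matrix on $K$) supplies the one step the paper leaves implicit.
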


\begin{lemma}
\label{lem:Ksim-M}
Any $m$-simplex $K\subset\R^d$ measured in the metric $\mathbb{M}_K$ is similar to an $m$-simplex $\hat{K}\subset\R^m$
measured in the Euclidean metric if and only if one of the following conditions holds.
\begin{enumerate}
\item[(i)] $\mathbb{M}_K^{\frac{1}{2}} K$ can be obtained from $\hat{K}$ by a sequence of rotation, translation, and dilation transformations.
  In this case, $\hat{K}$ is considered as an $m$-simplex embedded in $\R^d$.
\item[(ii)] $\mathbb{M}_K^{\frac{1}{2}} F'_K$ can be expressed as a product of scalar and orthogonal matrices.
\item[(iii)] There exists a constant $\theta_K$ such that
{\em
\begin{equation}\label{Keq-2}
    \left(F_K'\right)^T \mathbb{M}_K F_K'=\theta_K\mathbb{I}.
    \end{equation}
    }
\item[(iv)] $F'_K$ satisfies
 {\em
 \begin{equation}
   \label{Ksim-3}
    \frac{1}{m}\text{tr}\left(\left(F_K'\right)^T \mathbb{M}_K F_K'\right)
    =\det\left(\left(F_K'\right)^T \mathbb{M}_K F_K'\right)^{\frac{1}{m}},
    \end{equation}
    }
    where $\text{tr}(\cdot)$ denotes the trace of a matrix.
\item[(v)] $F'_K$ satisfies
 {\em
 \begin{equation}
   \label{Ksim-4}
    \frac{1}{m}\text{tr}\left(\left(\left(F_K'\right)^T \mathbb{M}_K F_K'\right)^{-1}\right)
    =\det\left(\left((F_K')^T \mathbb{M}_K F_K'\right)^{-1}\right)^{\frac{1}{m}}.
    \end{equation}
    }
\end{enumerate}
\end{lemma}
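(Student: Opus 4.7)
The plan is to reduce the entire statement to Lemma \ref{lem:Ksim} by pushing the simplex $K$ through the symmetric positive-definite linear map $\mathbb{M}_K^{\frac{1}{2}}$. The key observation is that, by the definition (\ref{Mnorm}) of length in the Riemannian metric, geometric measurements of $K$ in the $\mathbb{M}_K$-metric coincide with Euclidean measurements of the image simplex $\mathbb{M}_K^{\frac{1}{2}}K\subset\R^d$. Consequently, $K$ is similar to $\hat{K}$ in the sense of the present lemma if and only if $\mathbb{M}_K^{\frac{1}{2}}K$ is similar to $\hat{K}$ in the ordinary Euclidean sense to which Lemma \ref{lem:Ksim} applies.

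With this identification made, the first step is to apply Lemma \ref{lem:Ksim} to the pair $(\hat{K},\mathbb{M}_K^{\frac{1}{2}}K)$. By (\ref{JinM}), the affine mapping $F_{K,\mathbb{M}}:\hat{K}\to \mathbb{M}_K^{\frac{1}{2}}K$ has Jacobian $F'_{K,\mathbb{M}}=\mathbb{M}_K^{\frac{1}{2}}F'_K$, and since $\mathbb{M}_K^{\frac{1}{2}}$ is symmetric,
\[
(F'_{K,\mathbb{M}})^T F'_{K,\mathbb{M}}=(F'_K)^T\mathbb{M}_K F'_K .
\]
Substituting this into conditions (iii)--(v) of Lemma \ref{lem:Ksim} (with $F'_K$ replaced by $F'_{K,\mathbb{M}}$) yields precisely conditions (iii)--(v) of the present statement. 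Conditions (i) and (ii) of Lemma \ref{lem:Ksim}, transported in the same way, give the geometric characterization of similarity of $\mathbb{M}_K^{\frac{1}{2}}K$ to $\hat{K}$ by rotation/translation/dilation and the SVD factorization of $\mathbb{M}_K^{\frac{1}{2}}F'_K$ as a scalar times orthogonal matrices, which are exactly (i) and (ii) of the present lemma.

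No serious obstacle is anticipated. The one point requiring a touch of care is that ``similarity in the $\mathbb{M}_K$-metric'' must be interpreted through the isometric correspondence $K\mapsto \mathbb{M}_K^{\frac{1}{2}}K$, because the notions of rotation and rigid motion depend on the underlying inner product. Once this is made explicit at the outset, the entire proof reduces to invoking Lemma \ref{lem:Ksim} with $F'_K$ replaced by $\mathbb{M}_K^{\frac{1}{2}}F'_K$, so no separate computation is needed.
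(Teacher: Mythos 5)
Your proof is correct and follows exactly the route the paper intends: the paper states this lemma without proof, immediately after observing that geometric properties of $K$ in the metric $\mathbb{M}_K$ coincide with those of $\mathbb{M}_K^{\frac{1}{2}}K$ in the Euclidean metric and that $F'_{K,\mathbb{M}}=\mathbb{M}_K^{\frac{1}{2}}F'_K$, so the result is precisely Lemma~\ref{lem:Ksim} applied to the pair $(\hat{K},\mathbb{M}_K^{\frac{1}{2}}K)$ together with the identity $(F'_{K,\mathbb{M}})^TF'_{K,\mathbb{M}}=(F_K')^T\mathbb{M}_KF_K'$. Your explicit remark that similarity in the $\mathbb{M}_K$-metric must be interpreted through this isometric correspondence is the right point of care and fills in the detail the paper leaves implicit.
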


\begin{lemma}
\label{lem:eq-tr-norm}
For any $m$-simplexes $K \subset \mathbb{R}^d$ and $\hat{K}\subset \mathbb{R}^m$, there holds
{\em
\begin{align}
\frac{1}{m} \big\| (F'_K)^T\mathbb{M}_KF'_K\big\| &\leq
\frac{1}{m} \tr \left( (F_{K}')^T\mathbb{M}_K F_{K}'\right)
\leq \big\| (F'_K)^T\mathbb{M}_KF'_K\big\| ,\label{eq-tr}\\
\frac{1}{m} \big\| \left((F'_K)^T\mathbb{M}_KF'_K\right)^{-1}\big\| &\leq
\frac{1}{m} \tr \left(\left((F'_K)^T\mathbb{M}_KF'_K\right)^{-1}\right)
\leq \big\| \left((F'_K)^T\mathbb{M}_KF'_K\right)^{-1}\big\|.\label{eq-tr-inv}
\end{align}
}
\end{lemma}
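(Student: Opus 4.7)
The plan is to reduce both inequalities to a single elementary fact about eigenvalues of symmetric positive definite (SPD) matrices. The key observation is that $A := (F_K')^T\mathbb{M}_K F_K'$ is an $m\times m$ SPD matrix: symmetry is immediate from symmetry of $\mathbb{M}_K$, and positive definiteness follows because $F_K'$ has full column rank $m$ (since $F_K' = E_K \hat{E}^{-1}$ and $\hat{E}$ is invertible while $E_K$ has full rank due to non-degeneracy of $K$), combined with $\mathbb{M}_K \succeq \underline{\rho}\,\mathbb{I} \succ 0$ from \eqref{Mbound}. Consequently $A^{-1}$ is also SPD.

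First, I would diagonalize $A = Q\Lambda Q^T$ with $\Lambda = \text{diag}(\lambda_1,\ldots,\lambda_m)$ and $\lambda_i > 0$. Since the spectral norm of an SPD matrix equals its largest eigenvalue and trace equals the sum of eigenvalues,
\[
\|A\| = \max_{1\le i \le m}\lambda_i, \qquad \tr(A) = \sum_{i=1}^m \lambda_i.
\]
The elementary bound $\max_i \lambda_i \le \sum_i \lambda_i \le m \max_i \lambda_i$ then yields
\[
\|A\| \le \tr(A) \le m\,\|A\|,
\]
and dividing through by $m$ gives exactly \eqref{eq-tr}.

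For \eqref{eq-tr-inv}, I would apply the same reasoning to $A^{-1} = Q\Lambda^{-1}Q^T$, whose eigenvalues are $1/\lambda_i > 0$. The identical chain of inequalities gives $\|A^{-1}\| \le \tr(A^{-1}) \le m\,\|A^{-1}\|$, which upon division by $m$ is precisely the claim. Both inequalities thus follow from the single spectral identity; the author's one-line reference to Lemma~\ref{lem-hKaK} is justified because the proof there uses exactly the same eigenvalue-versus-trace bound (see \eqref{eig-tr-inv}).

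There is essentially no obstacle here: the only subtlety is confirming that $A$ is SPD so that the spectral-norm-equals-largest-eigenvalue identity applies, and this is a one-line verification using \eqref{Mbound} and the non-degeneracy of $K$. No properties specific to the embedding dimension $d$ or to the relation $m\le d$ are needed beyond what guarantees $\operatorname{rank}(E_K)=m$.
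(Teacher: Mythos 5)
Your proof is correct and matches the paper's intent: the paper actually states Lemma~\ref{lem:eq-tr-norm} without proof, but the identical eigenvalue fact (spectral norm equals the largest eigenvalue, trace equals the sum, hence $\|A\|\le \tr(A)\le m\|A\|$ for a symmetric positive definite $A$) is exactly what the authors invoke in \eqref{eig-tr-inv} within the proof of Lemma~\ref{lem-hKaK}. Your additional verification that $(F_K')^T\mathbb{M}_KF_K'$ is symmetric positive definite (via full column rank of $F_K'$ and \eqref{Mbound}) is a worthwhile detail the paper leaves implicit.
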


\begin{lemma}
\label{lem:h-M}
For any $m$-simplexes $K \subset \mathbb{R}^d$ and $\hat{K}\subset \mathbb{R}^m$, there holds
\begin{align}
& \frac{h^2_{K,\mathbb{M}}}{\hat{h}^2}\leq \big\| (F'_K)^T\mathbb{M}_KF'_K\big\| \leq \frac{h^2_{K,\mathbb{M}}}{\hat{a}^2},
\label{hr-M-1}
\\
&\frac{\hat{a}^2}{a_{K,\mathbb{M}}^2}
\le \big\| \left((F'_K)^T\mathbb{M}_KF'_K\right)^{-1}\big\|
\le \frac{m^2\hat{h}^2}{a_{K,\mathbb{M}}^2},
\label{ar-M-1}
\end{align}
where $\hat{h}$ and $\hat{a}$ are the diameter and minimum height of $\hat{K}$ measured in the Euclidean metric and
$h_{K,\mathbb{M}}$ and $a_{K,\mathbb{M}}$ are the diameter and minimum height of $K$ measured in the metric $\mathbb{M}_K$.
\end{lemma}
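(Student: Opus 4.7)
The plan is to reduce the claim to the Euclidean version already established in Lemma~\ref{lem-sEK}. The key identity, which follows immediately from \eqref{JinM}, is
\[
(F'_K)^T\mathbb{M}_K F'_K = (F'_{K,\mathbb{M}})^T F'_{K,\mathbb{M}},\qquad F'_{K,\mathbb{M}}=\mathbb{M}_K^{\frac{1}{2}}F'_K=E_{K,\mathbb{M}}\hat{E}^{-1}.
\]
So the matrix whose operator norm we wish to control is exactly the Gram matrix of the Jacobian of the affine mapping from $\hat{K}\subset\mathbb{R}^m$ onto the transformed simplex $\mathbb{M}_K^{\frac{1}{2}}K\subset\mathbb{R}^d$. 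By \eqref{Mnorm}, the Euclidean diameter and minimum height of $\mathbb{M}_K^{\frac{1}{2}}K$ coincide with $h_{K,\mathbb{M}}$ and $a_{K,\mathbb{M}}$. Hence applying Lemma~\ref{lem-sEK} to the pair $(\mathbb{M}_K^{\frac{1}{2}}K,\hat{K})$ yields two-sided bounds of the claimed form in the metric $\mathbb{M}_K$.

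To execute this concretely, I would follow the template of Lemma~\ref{lem-hKaK}. First invoke Lemma~\ref{lem:eq-tr-norm} to sandwich the operator norm between $\frac{1}{m}$ and $1$ times the trace, and then evaluate the trace through the factorization $F'_{K,\mathbb{M}}=E_{K,\mathbb{M}}\hat{E}^{-1}$. For the upper bound in \eqref{hr-M-1} one combines
\[
\tr\bigl(E_{K,\mathbb{M}}^T E_{K,\mathbb{M}}\bigr)=\sum_{j=1}^{m}\|\bm{x}_j^K-\bm{x}_0^K\|_{\mathbb{M}}^2 \le m\,h_{K,\mathbb{M}}^2
\]
with the estimate on $\|\hat{E}^{-1}\|$ coming from Lemma~\ref{lem-hKaK} applied to $\hat{K}$; the lower bound is obtained by relabeling so that $\bm{x}_0^K$ is an endpoint of an $\mathbb{M}_K$-longest edge of $K$, which forces one trace summand to equal $h_{K,\mathbb{M}}^2$, while $\|\hat{E}\|$ is bounded above by a multiple of $\hat{h}$. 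For \eqref{ar-M-1} the same procedure applies to the inverse $\hat{E}(E_{K,\mathbb{M}}^T E_{K,\mathbb{M}})^{-1}\hat{E}^T$, with the trace now expressed through the metric $\bm{q}$-vectors in \eqref{qv-M},
\[
\tr\bigl(E_{K,\mathbb{M}}^{+}(E_{K,\mathbb{M}}^{+})^T\bigr)=\sum_{j=1}^{m}\|\bm{q}_{j,\mathbb{M}}^K\|^2=\sum_{j=1}^{m}\frac{1}{(a_{j,\mathbb{M}}^K)^2},
\]
where the last identity comes from the metric version of Lemma~\ref{lem-q}(ii), identifying each $\|\bm{q}_{j,\mathbb{M}}^K\|$ with the reciprocal of the $j$-th altitude of $K$ in the metric $\mathbb{M}_K$; bounding this sum above by $m/a_{K,\mathbb{M}}^2$ and combining with the $\|\hat{E}\|^2\le m\hat{h}^2$-type estimate from Lemma~\ref{lem-hKaK} gives the right-hand side of \eqref{ar-M-1}, and the lower bound is again read off from a single dominant summand.

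The main obstacle I anticipate is tracking the $m$-dependent constants cleanly through the factorization $F'_{K,\mathbb{M}}=E_{K,\mathbb{M}}\hat{E}^{-1}$. The operator norm of a triple product $\hat{E}^{-T}A\hat{E}^{-1}$ is not tightly bounded by $\|\hat{E}^{-1}\|^2\|A\|$ with sharp constants, so naive submultiplicativity would be wasteful and would produce extra factors of $m$ on both sides. The cleanest route is therefore to work at the level of the trace (where the $m$-factors from Lemma~\ref{lem:eq-tr-norm} appear in exactly the right places) rather than to chain operator-norm inequalities, and to exploit the freedom in relabeling the vertex $\bm{x}_0^K$ when proving the lower bounds. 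Once this bookkeeping is done, Lemma~\ref{lem:h-M} is a direct metric analogue of Lemmas~\ref{lem-hKaK} and \ref{lem-sEK}, obtained by the substitution $E_K\mapsto \mathbb{M}_K^{\frac{1}{2}}E_K$.
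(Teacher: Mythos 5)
The paper offers no argument to compare against here: Lemma~\ref{lem:h-M} sits in the block of results the authors explicitly ``list \dots without proof,'' so your proposal has to stand on its own. Your central reduction --- replace $E_K$ by $\mathbb{M}_K^{1/2}E_K$ so that $(F_K')^T\mathbb{M}_KF_K'=(F'_{K,\mathbb{M}})^TF'_{K,\mathbb{M}}$ and the claim becomes a Euclidean statement about $\mathbb{M}_K^{1/2}K$ --- is certainly the intended mechanism, and it does prove bounds with the constants of Lemma~\ref{lem-sEK}. But Lemma~\ref{lem:h-M} asserts strictly sharper constants, and the obstacle you flag at the end (``tracking the $m$-dependent constants'') is not mere bookkeeping. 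Following your own route, the trace sandwich of Lemma~\ref{lem:eq-tr-norm} costs a factor $m$ and the estimate $\mathrm{tr}(\hat{E}^{-T}A\hat{E}^{-1})\le \mathrm{tr}(A)\,\|(\hat{E}^T\hat{E})^{-1}\|$ costs another $m$ via \eqref{aK}, so you land at $m^2h_{K,\mathbb{M}}^2/\hat{a}^2$ for the upper bound in \eqref{hr-M-1}, not $h_{K,\mathbb{M}}^2/\hat{a}^2$. Worse, that particular inequality is false as stated for $m\ge 2$: take $\mathbb{M}=\mathbb{I}$, let $\hat{K}$ be the regular tetrahedron with vertices $(1,1,1)$, $(1,-1,-1)$, $(-1,1,-1)$, $(-1,-1,1)$, and let $K=F_K(\hat{K})$ with $F_K'=\mathrm{diag}(2,1,1)$; then $\|(F_K')^TF_K'\|=4$ while $h_K^2/\hat{a}^2=20/(16/3)=3.75$. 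No amount of careful trace bookkeeping can prove an inequality that fails.

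The other three bounds are correct, but none is reachable by the trace route either; each needs a direct singular-value argument. The lower bound in \eqref{hr-M-1} follows by applying $F'_{K,\mathbb{M}}$ to the pulled-back longest edge: $\|F'_{K,\mathbb{M}}\|\ge \|\mathbb{M}_K^{1/2}(\bm{x}_i^K-\bm{x}_j^K)\|/\|\bm{\xi}_i-\bm{\xi}_j\|\ge h_{K,\mathbb{M}}/\hat{h}$, with no factor of $m$ lost. The lower bound in \eqref{ar-M-1} follows from the transformation law $\bm{q}_{j,\mathbb{M}}=\bigl((F'_{K,\mathbb{M}})^{+}\bigr)^T\hat{\bm{q}}_j$ applied at the index realizing $a_{K,\mathbb{M}}$, giving $\|(F'_{K,\mathbb{M}})^{+}\|\ge \hat{a}_j/a_{K,\mathbb{M}}\ge \hat{a}/a_{K,\mathbb{M}}$; your identification of $\|\bm{q}_{j,\mathbb{M}}\|$ with $1/a_{j,\mathbb{M}}$ is the right ingredient, but it must be used on a single dominant direction rather than summed into a trace. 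The upper bound in \eqref{ar-M-1} does come from plain submultiplicativity, $\|(F'_{K,\mathbb{M}})^{+}\|\le\|\hat{E}\|\,\|E_{K,\mathbb{M}}^{+}\|\le \sqrt{m}\,\hat{h}\cdot\sqrt{m}/a_{K,\mathbb{M}}$, which is exactly where the $m^2$ in \eqref{ar-M-1} originates. So your plan proves \eqref{ar-M-1} and a version of \eqref{hr-M-1} carrying the $m$-factors of Lemma~\ref{lem-sEK}, but as written it cannot establish the lemma, because one of the four inequalities it undertakes to prove does not hold.
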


We give a lower bound for the size of $K $ in the metric $\mathbb{M}_K$ in terms of the minimum height $a_{K,\mathbb{M}}$.

\begin{lemma}
\label{lem:volKM-aK}
For any $m$-simplex $K \subset \mathbb{R}^d$, there holds
\begin{equation}
\label{kak}
|K|_{\mathbb{M}} \ge \frac{ a_{K,\mathbb{M}}^{m}}{m^{\frac{m}{2}}m!}~ .
\end{equation}
\end{lemma}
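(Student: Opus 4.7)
The plan is to combine the determinantal expression for $|K|_{\mathbb{M}}$ given by Lemma~\ref{lem:volK-M} with a lower bound on the smallest eigenvalue of $E_K^T\mathbb{M}_K E_K$ derived from the $\bm{q}$-vector characterization of the heights. Concretely, Lemma~\ref{lem:volK-M} yields
\[
|K|_{\mathbb{M}} = \frac{1}{m!}\det\!\left(E_K^T\mathbb{M}_K E_K\right)^{\frac{1}{2}},
\]
so the task reduces to showing $\det(E_K^T\mathbb{M}_K E_K) \ge (a_{K,\mathbb{M}}^2/m)^m$.

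First, I would transfer the $\bm{q}$-vector identity from \S\ref{sec:propE} to the metric setting. Applying Lemma~\ref{lem-q}(ii) to the rescaled simplex $\mathbb{M}_K^{\frac{1}{2}} K$ and using \eqref{qv-M}, the $j$-th height of $K$ in the metric $\mathbb{M}_K$ equals $1/\|\bm{q}_{j,\mathbb{M}}\|$. Hence, by the same calculation as in \eqref{ak-r},
\[
\tr\!\left(\left(E_K^T\mathbb{M}_K E_K\right)^{-1}\right)
=\sum_{j=1}^m \|\bm{q}_{j,\mathbb{M}}\|^2
=\sum_{j=1}^m \frac{1}{(a_{j,K,\mathbb{M}})^2}
\le \frac{m}{a_{K,\mathbb{M}}^2}.
\]

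Next, since $E_K^T\mathbb{M}_K E_K$ is symmetric positive definite, let $\lambda_1 \le \cdots \le \lambda_m$ be its eigenvalues. Each $1/\lambda_i$ is bounded above by the trace of the inverse, so $1/\lambda_i \le m/a_{K,\mathbb{M}}^2$, i.e., $\lambda_i \ge a_{K,\mathbb{M}}^2/m$ for every $i$. Multiplying these bounds gives
\[
\det\!\left(E_K^T\mathbb{M}_K E_K\right) = \prod_{i=1}^m \lambda_i \ge \left(\frac{a_{K,\mathbb{M}}^2}{m}\right)^m.
\]
Substituting this into the expression for $|K|_{\mathbb{M}}$ produces exactly \eqref{kak}.

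There is no genuine obstacle here; the only thing to be a little careful about is making sure the $\bm{q}$-vector identity is used in the metric $\mathbb{M}_K$ rather than in the Euclidean one, which is why I would first invoke \eqref{qv-M} to pass from the Euclidean $\bm{q}$-vectors to $\bm{q}_{j,\mathbb{M}}$ before applying the trace identity. Everything else is a direct eigenvalue argument.
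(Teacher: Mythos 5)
Your proof is correct, and it reaches \eqref{kak} by a route that differs from the paper's in one meaningful respect. The paper performs a QR decomposition of $\mathbb{M}_K^{\frac12}E_K$, writes $|K|_{\mathbb{M}}=\frac{1}{m!}\prod_i\sigma_i$ in terms of the singular values of the triangular factor $R_K$, and then \emph{cites} an external result (from the Boissonnat--Chazal--Yvinec reference) asserting $\sigma_i\ge a_{R_K}/\sqrt{m}$, finishing by identifying $a_{R_K}$ with $a_{K,\mathbb{M}}$ via the rotation $Q_K$. You instead work directly with the Gram matrix $E_K^T\mathbb{M}_KE_K$ and derive the needed spectral bound internally: transferring the $\bm{q}$-vector identity of Lemma~\ref{lem-q} to the metric setting via \eqref{qv-M} gives $\tr\bigl((E_K^T\mathbb{M}_KE_K)^{-1}\bigr)\le m/a_{K,\mathbb{M}}^2$ exactly as in \eqref{ak-r}, and since every reciprocal eigenvalue of a symmetric positive definite matrix is dominated by the trace of its inverse, each eigenvalue satisfies $\lambda_i\ge a_{K,\mathbb{M}}^2/m$; taking the product and the square root yields \eqref{kak}. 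The two arguments are equivalent at their core ($\lambda_i=\sigma_i^2$), but yours buys self-containedness: it replaces the cited singular-value fact with a two-line proof built from lemmas already established in \S\ref{sec:propE}, and it avoids the QR/rotation bookkeeping entirely. The paper's version, in exchange, makes the geometric picture (rotating $K$ into the coordinate $m$-plane) explicit. Both are valid; no gaps in yours.
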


\begin{proof}
From Lemma \ref{lem:volK-M}, we have
\begin{equation}
\label{KM-0}
\begin{split}
|K|_{\mathbb{M}} = \dfrac{1}{m!} \det\left(\left(\mathbb{M}_K^{\frac{1}{2}}E_K\right)^T
\left(\mathbb{M}_K^{\frac{1}{2}}E_K\right)\right)^{\frac{1}{2}}.
\end{split}
\end{equation}
Recall the QR-decomposition of $\mathbb{M}_K^{\frac{1}{2}}E_K$ reads as
\begin{align}\label{QR-MEK}
\mathbb{M}_K^{\frac{1}{2}}E_K = Q_K\left[\begin{matrix} R_K\\ \V 0\end{matrix}\right],
\end{align}
where $Q_K \in \mathbb{R}^{d\times d}$ is an unitary matrix, $R_K\in\mathbb{R}^{m\times m}$ is an upper triangular matrix, and $\mathbf{0}\in \mathbb{R}^{(d-m)\times m}$ is a zero-matrix. Substitute \eqref{QR-MEK} into \eqref{KM-0}, we have
\begin{align*}
|K|_{\mathbb{M}}
 &= \dfrac{1}{m!} \det\left(\begin{bmatrix*}[c]
R_K\\ \mathbf{0}
\end{bmatrix*}^TQ_K^T Q_K
\begin{bmatrix*}[c]
R_K\\ \mathbf{0}
\end{bmatrix*}\right)^{\frac{1}{2}}\\
 & = \dfrac{1}{m!} \det(R_K^TR_K)^{\frac{1}{2}}\\
 & = \dfrac{1}{m!} \prod_{i=1}^{m}\sigma_i\\
& \ge \dfrac{1}{m!m^{\frac{m}{2}}} a_{R_{K}}^{m},
 \end{align*}
where $\sigma_i$, $i = 1, ..., m$, are the singular values of $R_K$, $a_{R_K}$ is the minimum height of the simplex formed by the columns of $R_K$, and the fact \cite{BCY2018} that  $\sigma_i \ge \frac{a_{R_K}}{\sqrt{m}}$ has been used.

Since $Q_K$ is a rotation matrix, the minimum height of $K$ with respect to the metric $\mathbb{M}_K$ is the same as
the minimum height of the convex hull formed by the columns of $R_K$, i.e., $a_{K,\mathbb{M}} = a_{R_K}$.
Thus, we have obtained \eqref{kak}.
\end{proof}

\vspace{8pt}

It is instructional to see some special cases of \eqref{kak} with $d\geq m$:
\begin{itemize}
\item for $m=1$, $K$ is a line segment, $|K|_{\mathbb{M}} \ge a_{K,\mathbb{M}}$;
\item for $m=2$, $K$ is a triangle, $|K|_{\mathbb{M}} \ge \dfrac{1}{4}~a_{K,\mathbb{M}}^{2}$;
\item for $m=3$, $K$ is a tetrahedral, $|K|_{\mathbb{M}} \ge \dfrac{1}{18\sqrt{3}}~a_{K,\mathbb{M}}^{3}$.
\end{itemize}

\section{Mathematical characterization of nonuniform meshes}
\label{sec:M-uniform}

In this section, we use the properties of uniform meshes to establish the mathematical characterization of nonuniform meshes
that are viewed as uniform in some Riemannian metrics. To be more specific, we present the so-called
equidistribution and alignment conditions in a unifying formulation for an $m$-simplicial mesh
that can be a bulk, surface, or curve mesh. These conditions are used to characterize the size, shape, and orientation of the $m$-simplicial mesh and develop an energy function for mesh generation and adaptation.

\subsection{Equidistribution and alignment conditions}

We define a uniform mesh as a mesh whose elements have the same size and are similar to a reference element.
A nonuniform mesh is viewed as a uniform one in some Riemannian metrics. Consider a mesh $\mathcal{T}_h$ for $S$.
We assume that we are given metric tensor $\M = \M(\bm{x})$.
Then, the uniformity of $\mathcal{T}_h$ in metric $\M$ can be characterized
by the equidistribution and alignment conditions as follows.

The equidistribution condition requires that all of the elements in the mesh $\mathcal{T}_h$  have the same size, i.e.,
\begin{equation}\label{equ-0}
\begin{split}
|K|_{\mathbb{M}}
=\frac{\sigma_h}{N}, \quad \forall K \in \mathcal{T}_h,
\end{split}
\end{equation}
where $\sigma_h=\sum\limits_{K\in\mathcal{T}_h}|K|_{\mathbb{M}}$.
Recalling from Lemma~\ref{lem:volK-M},
\begin{equation*}
|K|_{\mathbb{M}} = |\hat{K}|\det\FMF^{\frac{1}{2}}.
\end{equation*}
the equidistribution condition becomes
\begin{equation}
\label{eq-1}
\det\FMF^{\frac{1}{2}}
=\frac{\sigma_h}{N |\hat{K}|}, \quad \forall K \in \mathcal{T}_h .
\end{equation}

Moreover, the alignment condition requires that the elements of $\mathcal{T}_h$ be similar to the reference element $\hat{K}$.
From Lemma \ref{lem:Ksim-M}, the alignment condition reads as
\begin{equation}
\label{ali-0}
\frac{1}{m}\tr\FMF
=\det\FMF^{\frac{1}{m}},
\quad
\forall K \in \mathcal{T}_h,
\end{equation}
or
\begin{equation}
\label{ali-1}
\frac{1}{m}\tr\FMFinv=\det \FMFinv^{\frac{1}{m}},\quad
\forall K \in \mathcal{T}_h.
\end{equation}

\begin{rem}
For bulk meshes ($m=d$), we have $|\hat{K}| = \frac{1}{d!} \det(\hat{E})$ and $|K| = \frac{1}{d!}\det(E_K)$. Then the equidistribution condition \eqref{eq-1} becomes
\begin{equation}
\label{equ-d}
|K|\det\left(\M_K\right)^{\frac12}
=\frac{\sigma_h}{N}, \quad \forall K \in \mathcal{T}_h.
\end{equation}
\end{rem}

\subsection{Mesh energy functions}
\label{sec:Ieq}
Now, we are ready to formulate the mesh energy function based on the equidistribution condition
(\ref{eq-1}) and the alignment condition (\ref{ali-1}).

We want to generate an ``optimal'' mesh satisfying
these conditions. We first formulate energy functions of a general mesh $\mathcal{T}_h$ (which is not assumed to
satisfy (\ref{eq-1}) and (\ref{ali-1})) and then optimize these functions with the hope that
an obtained approximate optimizer satisfies (\ref{eq-1}) and (\ref{ali-1}) more closely than the initial mesh.

We first consider the mesh energy function for the equidistribution condition (\ref{eq-1}).
To this end, we have the following lemma.
\begin{lemma}\label{eq-c}
For any  dimensionless parameter $p>1$, there holds
\begin{equation}\label{Holders}
\begin{split}
&\sum_{K\in\T_h}\frac{|K|_{\mathbb{M}}}{\sigma_h}\cdot \frac{1}{|K|_{\mathbb{M}}}\le
\left( \sum_{K\in\mathcal{T}_h} \frac{|K|_{\mathbb{M}}}{\sigma_h} \cdot \left(\frac{1}{|K|_{\mathbb{M}}}\right)^{p}\right)^{\frac{1}{p}},
\end{split}
\end{equation}
with equality if and only if
\[
|K|_{\mathbb{M}} = |\hat{K}|\det\FMF^{\frac{1}{2}}= \text{constant},\quad\quad \forall~K\in\mathcal{T}_h.
\]
\end{lemma}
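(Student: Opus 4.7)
The plan is to recognize this inequality as a direct instance of Jensen's inequality (equivalently, a weighted power-mean inequality) applied to the probability weights $w_K := |K|_{\mathbb{M}}/\sigma_h$. By the very definition of $\sigma_h$, these weights are nonnegative and satisfy $\sum_{K\in\mathcal{T}_h} w_K = 1$, so they form a genuine probability distribution over $\mathcal{T}_h$. Moreover $|K|_{\mathbb{M}} > 0$ for every $K$, because $\mathbb{M}$ is uniformly positive definite by (\ref{Mbound}) and each simplex has positive $m$-dimensional measure, so every weight is strictly positive.

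First I would introduce $f_K := 1/|K|_{\mathbb{M}}$ and rewrite the claimed inequality in the compact form
\[
\sum_{K\in\mathcal{T}_h} w_K f_K \;\le\; \Bigl(\sum_{K\in\mathcal{T}_h} w_K f_K^{p}\Bigr)^{1/p}.
\]
Since $p>1$, the map $\varphi(x)=x^{p}$ is strictly convex on $[0,\infty)$, and Jensen's inequality yields
\[
\Bigl(\sum_{K\in\mathcal{T}_h} w_K f_K\Bigr)^{p} \;\le\; \sum_{K\in\mathcal{T}_h} w_K f_K^{p}.
\]
Taking $p$-th roots (both sides being positive) then gives (\ref{Holders}) directly. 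An equivalent route, should one prefer to avoid invoking Jensen, is to apply Hölder's inequality with conjugate exponents $p$ and $q=p/(p-1)$ to the factorization $w_K f_K = \bigl(w_K^{1/p} f_K\bigr)\cdot w_K^{1/q}$; the second factor contributes $(\sum_K w_K)^{1/q}=1$ and the first contributes the claimed right-hand side.

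For the equality characterization I would appeal to the strict convexity of $x^{p}$ for $p>1$: equality in Jensen's inequality holds if and only if $f_K$ is constant on the support of the weights. Since every $w_K>0$, this forces $1/|K|_{\mathbb{M}}$, and hence $|K|_{\mathbb{M}}$, to be the same for every $K\in\mathcal{T}_h$. Substituting the identity $|K|_{\mathbb{M}} = |\hat{K}|\det\FMF^{1/2}$ from Lemma~\ref{lem:volK-M} then recovers the stated equality condition.

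I do not anticipate a genuine obstacle here: the estimate is a textbook convexity inequality, and the lemma really just repackages it in the notation of the equidistribution condition. The only two points deserving care are (i) verifying that all weights $w_K$ are strictly positive so that the equality case is clean (which is why (\ref{Mbound}) matters), and (ii) noting that the strict inequality $p>1$ (rather than $p\ge 1$) is precisely what powers the ``if and only if,'' since $\varphi(x)=x^{p}$ fails to be strictly convex at $p=1$.
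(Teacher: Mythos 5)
Your proof is correct and takes essentially the same approach as the paper, whose entire proof is the one-line remark that the inequality follows from H\"older's inequality; your Jensen/weighted-power-mean argument (and the explicit H\"older factorization with conjugate exponents $p$ and $p/(p-1)$) is exactly the standard way to flesh out that citation. Your additional care with the strict positivity of the weights and the strict convexity of $x^p$ for $p>1$ correctly supplies the equality characterization that the paper asserts but does not spell out.
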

\begin{proof}
This follows from H\"older's inequality.

\end{proof}

The above lemma implies that minimizing the difference between the right- and left-hand sides of (\ref{Holders})
tends to make $|K|_{\mathbb{M}}$ constant for all $K\in\mathcal{T}_h$.
Since the left-hand side is a constant, we define the equidistribution energy function as
\begin{align}
\mathcal{I}_{eq} &= m^{\frac{m p}{2}}|\hat{K}|^{p}
\sum_{K\in\mathcal{T}_h} |K|_{\mathbb{M}}\cdot \left(\frac{1}{|K|_{\mathbb{M}}}\right)^{p}
\notag \\
&=m^{\frac{m p}{2}}\sum_{K\in\T_h} |\hat{K}|\det\FMF^{\frac{1}{2}}\det\FMF^{-\frac{p}{2}}
\notag
\\
&= m^{\frac{m p}{2}}
\sum_{K\in\T_h}|\hat{K}|\det\FMFinv^{\frac{p-1}{2}},
\label{Ieq}
\end{align}
where $p>1$ is a dimensionless parameter. The factor $m^{\frac{m p}{2}}|\hat{K}|^{p}$ has been added here to match the energy function for alignment which will be discussed later.
It is interesting to point out that the above condition is the same as that in \cite{H2001} for bulk meshes ($m=d$)
and that in \cite{AvaryH2020} for surface meshes ($m=d-1$).

\vspace{8pt}

Next, we consider the mesh energy function for the alignment condition (\ref{ali-1}).
Notice that the left- and right-hand sides are the arithmetic mean and the geometric mean of the eigenvalues
of the matrix $\FMF^{-1}$, respectively. The inequality of arithmetic and geometric means gives
\begin{equation}\label{ine}
\frac{1}{m}\tr\FMFinv\ge \det \FMFinv^{\frac{1}{m}},
\end{equation}
with equality if and only if all of the eigenvalues are equal.
From this, for any $p>0$ we have
\[
\tr \FMFinv^{\frac{m p}{2}}
\geq m^{\frac{mp}{2}} \det \FMFinv^{\frac{p}{2}}.
\]
This implies that minimizing the difference between the two sides tends to make the mesh satisfy the alignment
condition (\ref{ali-1}). Multiplying the above inequality with $|K|_{\mathbb{M}}$ and summing the difference overall
elements, we obtain the alignment energy function as
\begin{align}
\mathcal{I}_{ali}&=
\sum_{K\in\mathcal{T}_h}|K|_{\mathbb{M}}\left(
\tr \FMFinv^{\frac{mp}{2}}
-m^{\frac{mp}{2}}\det \FMFinv^{\frac{p}{2}}\right)
\notag \\
&=\sum_{K\in\mathcal{T}_h}|\hat{K}|\det\FMF^{\frac{1}{2}}
\tr \FMFinv^{\frac{mp}{2}}
\notag
\\
& \qquad ~~~~-m^{\frac{mp}{2}}
\sum_{K\in\mathcal{T}_h}|\hat{K}|\det\FMF^{\frac{1}{2}}\det \FMFinv^{\frac{p}{2}}
\notag
\\
& =
\sum_{K\in\mathcal{T}_h}|\hat{K}|\det\FMF^{\frac{1}{2}}
\tr \FMFinv^{\frac{mp}{2}}
\notag
\\
& \qquad ~~~-m^{\frac{mp}{2}}
\sum_{K\in\mathcal{T}_h}|\hat{K}|\det \FMFinv^{\frac{p-1}{2}} .
\label{Iali}
\end{align}

We now formulate a mesh energy function for combined equidistribution and alignment.
One way to ensure this is to average (\ref{Ieq}) and (\ref{Iali}). Notice that
(\ref{Ieq}) and (\ref{Iali}) have the same dimension. As a consequence, we can use
a dimensionless parameter $\theta\in[0,1]$. Thus, we have
\begin{align}
\mathcal{I}_h&=\theta \mathcal{I}_{ali}+(1-\theta) \mathcal{I}_{eq}
\notag \\& =\theta \sum_{K\in\mathcal{T}_h} |\hat{K}| \dFMF^{\frac 1 2}
\tr\FMFinv^{\frac{mp}{2}}
\notag \\
& \quad  + (1-2\theta) m^{\frac{mp}{2}}
\sum_{K\in\T_h} |\hat{K}|\dFMF^{\frac 1 2}
\det\FMFinv^{\frac{p}{2}},
\label{Ih-1}
\end{align}
where $p > 1$ and $\theta \in [0,1]$ are dimensionless parameters.
It is instructional to point out that, in the case with $m = d$, the function (\ref{Ih-1}) can be viewed as
a Riemann sum of the continuous mesh energy functional developed in \cite{H2001} for bulk meshes based on equidistribution
and alignment conditions.
When $m < d$, $\FMF^{-1}$ in (\ref{Ih-1}) cannot be further simplified since $F_K^{'}$ is not a square matrix.
 We can rewrite (\ref{Ih-1}) into
 \begin{equation}
 \label{Ih-2-0}
 \mathcal{I}_h(\mathcal{T}_h)=\sum_{K\in \mathcal{T}_h} |\hat{K}|  G_K,
 \end{equation}
 where
 \begin{equation}\label{Ih-2-1}
 \begin{split}
 G_K =&~\theta~\det\FMF^{\frac{1}{2}}\tr\FMFinv^{\frac{mp}{2}}
 \\& + (1-2\theta)~ m^{\frac{mp}{2}} \det\FMFinv^{\frac{p-1}{2}}.
 \end{split}
 \end{equation}

\begin{rem}
In the numerical computation, we take $p = 3/2$ and $\theta = 1/3$ in the meshing function \eqref{Ih-2-1}.
This choice has been used and seems to work well in a variety of applications, e.g., see
\cite{AvaryH2020,M-RTE2020,M-SWEs2021,M-SWEs2022}.
\end{rem}

\section{Unifying moving mesh equations}
\label{sec:MMeqn}

In this section, we employ an MMPDE method in a unifying for to minimize the meshing function ($\ref{Ih-1}$).

First, we recall that when $S$ is a domain ($m=d$, bulk meshes), the MMPDE approach defines the moving mesh equation
as a gradient system of the mesh energy function $\mathcal{I}_h$, i.e.,
\begin{equation}\label{MMPDE-0}
 \frac{d \V{x}_i}{d t} = - \frac{P_i}{\tau} ~\left ( \frac{\partial \mathcal{I}_h}{\partial \V{x}_i} \right )^T, \quad i = 1, ..., N_v
\end{equation}
where $\frac{d \V{x}_i}{d t}$ is the nodal mesh velocity, $\tau$ is a positive parameter used for adjusting the time scale of mesh movement,
$P_i$ is a scalar function used to make the equation \eqref{MMPDE-0} invariant under the scaling transformation of
$\mathbb{M}$ to $c \mathbb{M}$ for any positive constant $c$. For example, we can take
\begin{equation}
\label{P-form}
P = \det\left(\mathbb{M}\right)^{\frac{m(p-1)}{2d}}.
\end{equation}

In the unifying form for $m \le d$, we also employ the MMPDE approach to define the moving mesh equation as a gradient system
of the mesh energy function. However, we need to pay special attention to cases $m < d$ when the mesh points are not moved out
of $S$. To this end, we denote
\begin{align}
\V{v}_i = - \frac{P_i}{\tau} \left ( \frac{\partial \mathcal{I}_h}{\partial \V{x}_i} \right )^T, \quad i = 1, ..., N_v
\label{vi-1}
\end{align}
and define
\begin{equation}
\label{MMPDE-1}
\frac{d \V{x}_i}{d t} =  \V{u}_i, \quad i = 1, ..., N_v,
\end{equation}
where $\V{u}_i$'s are related to $\V{v}_i$'s as explained in the following.

\begin{itemize}
\item[] \textbf{Case 1.} For $m = d$ (with $d = 1$, $2$, or $3$), we choose
\[
\V{u}_i = \V{v}_i, \quad i = 1, ..., N_v.
\]

\item[] \textbf{Case 2.} For $m=2$ and $d=3$, $S$ is a surface in $\mathbb{R}^3$.
        In this case, we can define
    \begin{equation}
    \label{m=d-1}
        \bm{u}_i = \bm{v}_i- \big( \bm{v}_i \cdot \bm{n}_i\big) \bm{n}_i, \quad i = 1, ..., N_v,
    \end{equation}
    where $\V{n}_i$ denotes the unit normal to $S$ at $\V{x}_i$ and can be computed using an analytical
    representation of $S$ (when available) or through a mesh/spline representation of $S$.

 \item[] \textbf{Case 3.} For $m=1$ (with $d = 2$ or $3$), $S$ is a curve in $\mathbb{R}^2$ or $\mathbb{R}^3$.
    In this case, we can define
    \begin{equation}
    \label{m=1}
        \bm{u}_i  = \big( \bm{v}_i \cdot \bm{\tau}_i\big) \bm{\tau}_i, \quad i = 1, ..., N_v,
    \end{equation}
    where $\bm{\tau}_i$ is the unit tangential vector along $S$ at $\V{x}_i$ and can be computed using an analytical
    representation of $S$ (when available) or through a mesh/spline representation of $S$.
\end{itemize}

\vspace{8pt}

To summarize, we have the unifying moving mesh equation as
\begin{equation}
\label{mmpde-unify}
\frac{d \V{x}_i}{d t} = \begin{cases}
\V{v}_i , &\text{for
~} m=d \text{ and } d = 1, 2, \text{ or } 3
\\
\bm{v}_i- \big( \bm{v}_i \cdot \bm{n}_i\big) \bm{n}_i, &\text{for
~} m=2 \text{ and } d = 3
\\
\big( \bm{v}_i \cdot \bm{\tau}_i\big) \bm{\tau}_i ,  &\text{for
~} m=1 \text{ and } d = 2 \text{ or }3
\end{cases}
\qquad i = 1, ..., N_v.
\end{equation}

\vspace{8pt}

The mesh equation \eqref{mmpde-unify} should be modified properly for the vertices on the boundary of $S$.
If $\x_i$ is a fixed boundary vertex, the corresponding equation should be replaced by
\begin{equation}
\label{mesh-fixed}
\frac{d \V x_i}{dt} = 0.
\end{equation}
If $\x_i$ is allowed to slide on the boundary, the nodal mesh velocity $\frac{d \V x_i}{dt}$ needs to be modified so that its normal component(s)
along the boundary is zero, i.e.,
\begin{equation}\label{mesh-slide}
\nabla \Phi (\V{x}_i) \cdot \frac{d \V x_i}{dt} = 0,
\end{equation}
assuming that $\Phi(\V{x})=0$ represents the bounded boundary.

\subsection{Scalar-by-matrix differentiation}
\label{sec:fA}

To compute $\V{v}_i$'s in (\ref{vi-1}) we need to compute the derivatives of $\mathcal{I}_h$ with respect to $\V{x}_i$'s.
In this subsection, we recall some definitions and properties of scalar-by-matrix differentiation \cite{HK2015}, a useful
tool for deriving the analytical expressions of those derivatives.

Let $f = f(A)$ be a scalar function of a matrix $A\in \mathbb{R}^{s\times n}$. Then the scalar-by-matrix derivative of $f$ with respect to $A$ is defined as
\begin{equation}\label{dfdA}
\frac{\partial f}{\partial A} =
\begin{bmatrix}
\frac{\partial f}{\partial A_{1,1}} & \cdots & \frac{\partial f}{\partial A_{s,1}}\\
\vdots & & \vdots\\
\frac{\partial f}{\partial A_{1,n}} & \cdots & \frac{\partial f}{\partial A_{s,n}}\\
\end{bmatrix}_{n\times s}
\quad \text{or} \quad \left(\frac{\partial f}{\partial A}\right)_{i,j} = \frac{\partial f}{\partial A_{j,i}}~.
\end{equation}
Assume that $A$ is a function of a parameter $t$, i.e., $A = A(t)$. Then,
the chain rule of differentiation with respect to $t$ is
\begin{equation}\label{dfdt}
\frac{\partial f}{\partial t}
= \sum_{i,j} \frac{\partial f}{\partial A_{j,i}}
             \frac{\partial A_{j,i}}{\partial t}
= \sum_{i,j} \left(\frac{\partial f}{\partial A}\right)_{i,j}
                   \frac{\partial A_{j,i}}{\partial t}
= \tr\left(\frac{\partial f}{\partial A}\frac{\partial A}{\partial t}\right) .
\end{equation}

\begin{lemma}
\label{dA}
(\cite{HK2015}) If $A$ is an $n\times n$ square matrix, we have
{\em
\begin{align}
&\frac{\partial \tr\left(A\right)}{\partial A} = \mathbb{I}\in \mathbb{R}^{n\times n},  \label{der-trA}
\\&\frac{\partial A^{-1}}{\partial t} = - A^{-1} \frac{\partial A}{\partial t} A^{-1} \quad \hbox{or}\quad A\frac{\partial A^{-1}}{\partial t} = - \frac{\partial A}{\partial t} A^{-1},\label{derAinv}
\\&
\frac{\partial \det\left(A\right)}{\partial t} = \det\left(A\right) ~\tr\left ( A^{-1} \frac{\partial A}{\partial t} \right ). \label{der-detA}
\end{align}}
\end{lemma}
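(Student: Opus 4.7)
The plan is to treat each of the three identities separately, using only the definition \eqref{dfdA} of the scalar-by-matrix derivative, the chain rule \eqref{dfdt}, and elementary properties of $A$ (cofactor expansion, the product rule applied to $A A^{-1} = \mathbb{I}$). None of these pieces is deep; the work is bookkeeping with indices, and the main subtlety is to be careful with the transposed indexing built into \eqref{dfdA}, where $(\partial f/\partial A)_{i,j} = \partial f/\partial A_{j,i}$.

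For \eqref{der-trA}, I would start from $\tr(A) = \sum_{k} A_{k,k}$ and differentiate entry by entry to get $\partial \tr(A)/\partial A_{j,i} = \delta_{j,i}$. Plugging into \eqref{dfdA} gives $(\partial \tr(A)/\partial A)_{i,j} = \delta_{i,j}$, which is exactly the identity matrix in $\mathbb{R}^{n\times n}$.

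For \eqref{derAinv}, the standard trick is to differentiate the identity $A(t)\, A^{-1}(t) = \mathbb{I}$ with respect to $t$. The product rule gives
\begin{equation*}
\frac{\partial A}{\partial t}\, A^{-1} + A\, \frac{\partial A^{-1}}{\partial t} = 0,
\end{equation*}
which immediately yields the second form $A\, \partial A^{-1}/\partial t = -\,\partial A/\partial t\cdot A^{-1}$, and then left-multiplying by $A^{-1}$ produces the first form. This step is essentially one line but must be done before \eqref{der-detA}, since that identity is often easier if one has \eqref{derAinv} in hand (though I will actually prove \eqref{der-detA} independently via cofactors, to avoid any circular worry).

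The main work is \eqref{der-detA} (Jacobi's formula). I would expand $\det(A)$ by cofactors along the $i$-th row, $\det(A) = \sum_{j} A_{i,j}\, C_{i,j}$, and note that since the cofactor $C_{i,j}$ does not depend on the entries of the $i$-th row, $\partial \det(A)/\partial A_{i,j} = C_{i,j}$. Combined with the classical formula $(A^{-1})_{j,i} = C_{i,j}/\det(A)$, this gives $\partial \det(A)/\partial A_{i,j} = \det(A)\,(A^{-1})_{j,i}$. Then the chain rule \eqref{dfdt} yields
\begin{equation*}
\frac{\partial \det(A)}{\partial t}
= \sum_{i,j}\frac{\partial \det(A)}{\partial A_{i,j}}\frac{\partial A_{i,j}}{\partial t}
= \det(A)\sum_{i,j}(A^{-1})_{j,i}\frac{\partial A_{i,j}}{\partial t}
= \det(A)\,\tr\!\left(A^{-1}\frac{\partial A}{\partial t}\right),
\end{equation*}
since $\sum_{i,j}(A^{-1})_{j,i}(\partial A/\partial t)_{i,j}$ is exactly the trace of the product $A^{-1}\,\partial A/\partial t$. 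The only place I expect a snag is keeping the index conventions straight between the cofactor formula for $A^{-1}$ and the transposed layout in \eqref{dfdA}; once that is nailed down, the three identities fall out directly.
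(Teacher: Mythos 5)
The paper does not actually prove this lemma: it is stated with a citation to \cite{HK2015} and used as a known toolbox result, so there is no in-paper argument to compare against. Your proposal supplies a complete and correct self-contained proof. The trace identity follows exactly as you say once the transposed indexing of \eqref{dfdA} is applied; differentiating $A A^{-1}=\mathbb{I}$ gives \eqref{derAinv} in one line; and your cofactor derivation of Jacobi's formula is sound, since the cofactor $C_{i,j}$ is independent of $A_{i,j}$, giving $\partial \det(A)/\partial A_{i,j}=C_{i,j}=\det(A)\,(A^{-1})_{j,i}$, after which the sum $\sum_{i,j}(A^{-1})_{j,i}\,\partial A_{i,j}/\partial t$ is precisely $\text{tr}\bigl(A^{-1}\,\partial A/\partial t\bigr)$. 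One small remark: your index bookkeeping also shows that, in the paper's convention, $\partial \det(A)/\partial A=\det(A)\,A^{-1}$ as a matrix identity, which combined with the chain rule \eqref{dfdt} gives \eqref{der-detA} immediately; stating that intermediate matrix form would make the link to the paper's own use of the chain rule (e.g., in the proof of Lemma \ref{lem:Gx}) most transparent. Your choice to prove \eqref{der-detA} by cofactors rather than via \eqref{derAinv} is fine and avoids any appearance of circularity.
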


Denote $\J= \FMF^{-1}$. Then, the mesh energy function \eqref{Ih-1} can be written in a general form as
\begin{equation}\label{Ih-2}
\mathcal{I}_h = \sum_{K\in \mathcal{T}_h} |\hat{K}| G_K
= \sum_{K\in \mathcal{T}_h} |\hat{K}| G_K\left(\J, \det\left(\J\right)\right) ,
\end{equation}
where $\J$ and $\det\left(\J\right)$ are considered as independent variables and
\begin{equation}\label{Ih-G}
G_K\left(\J, \det\left(\J\right)\right)
= \theta~ \det\left(\J\right) ^{-\frac{1}{2}}\tr\left(\J\right)^{\frac{mp}{2}}
 + (1-2\theta)~ m^{\frac{mp}{2}} \det\left(\J\right)^{\frac{p-1}{2}}.
\end{equation}

The calculation of $\partial G_K/\partial \bm{x}_i$ is through
\[
\frac{\partial G_K}{\partial \det\left(\J\right)}, \quad
\frac{\partial G_K}{\partial \J} .
\]

\begin{lemma} The derivatives of $G_K$ with
respect to the variables $\J$ and $\det\left(\J\right)$ are
{\em
\begin{align}
 \frac{\partial G_K}{\partial \J}
 &
 = \frac{ \theta mp}{2} \det\left(\J\right) ^{-\frac{1}{2}}  \tr\left(\J\right) ^{\frac{mp-2}{2}} \mathbb{I},
 \\
\frac{\partial G_K}{\partial \det\left(\J\right)}
& = -\frac{\theta}{2} \det\left(\J\right) ^{-\frac{3}{2}} \tr\left(\J\right)^{\frac{mp}{2}}
+ \frac{p-1}{2} (1-2\theta) ~m^{\frac{mp}{2}} \det\left(\J\right) ^{\frac{p-3}{2}}.\label{der-JdetJ}
\end{align}
}
\end{lemma}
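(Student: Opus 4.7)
The plan is to treat $\J$ and $\det(\J)$ as independent variables in $G_K$, as the lemma statement suggests, and then differentiate each term of the expression \eqref{Ih-G} separately using the standard rules for scalar-by-matrix differentiation collected in Lemma \ref{dA}. Because $G_K$ is written as a sum of two contributions, a $\theta$-weighted equidistribution-type term involving both $\det(\J)^{-1/2}$ and $\tr(\J)^{mp/2}$, and a $(1-2\theta)$-weighted term depending only on $\det(\J)$, the computation splits cleanly and reduces to elementary chain-rule bookkeeping.

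First I would compute $\partial G_K/\partial \det(\J)$. Since $\det(\J)$ is treated as a scalar variable independent of $\J$, differentiating the first term of \eqref{Ih-G} with respect to $\det(\J)$ gives $-\tfrac{\theta}{2}\det(\J)^{-3/2}\tr(\J)^{mp/2}$, and differentiating the second term gives $\tfrac{p-1}{2}(1-2\theta)m^{mp/2}\det(\J)^{(p-3)/2}$. Adding these two contributions recovers \eqref{der-JdetJ} exactly.

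Next I would compute $\partial G_K/\partial \J$. The second term of \eqref{Ih-G} depends only on $\det(\J)$, so with $\det(\J)$ held fixed as an independent variable it contributes nothing here. For the first term, $\det(\J)^{-1/2}$ is again a multiplicative constant, and I only need to differentiate $\tr(\J)^{mp/2}$. Using the chain rule together with the identity \eqref{der-trA} from Lemma \ref{dA}, namely $\partial \tr(\J)/\partial \J = \mathbb{I}$, I obtain $\tfrac{mp}{2}\tr(\J)^{(mp-2)/2}\,\mathbb{I}$, which combined with the $\theta\det(\J)^{-1/2}$ prefactor yields the claimed expression.

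There is no real obstacle here: the only point that requires emphasis is the convention that $\J$ and $\det(\J)$ are regarded as independent arguments of $G_K$, so that cross-dependence through $\det(\J)=\det(\J(\bm{x}_i))$ is deferred to the subsequent chain-rule step when differentiating the mesh energy with respect to $\bm{x}_i$. Once this convention is fixed, the proof is a one-line application of \eqref{der-trA} plus the power rule.
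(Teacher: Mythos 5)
Your proposal is correct and follows the same route as the paper, which simply invokes \eqref{der-trA} for the $\partial G_K/\partial \J$ formula and computes $\partial G_K/\partial \det\left(\J\right)$ directly by the power rule, with $\J$ and $\det\left(\J\right)$ treated as independent variables. Your added remark about deferring the cross-dependence to the later chain-rule step in $\bm{x}_i$ is consistent with how the paper uses this lemma in \S\ref{sec:ana-V}.
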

\begin{proof}
The first equality can be obtained using \eqref{der-trA} while the second can be obtained by calculating
the differentiation directly.
\end{proof}

\subsection{Analytical formulas for mesh velocities}
\label{sec:ana-V}

In this subsection, we derive the analytical expressions for the mesh velocities.

Notice that $\mathcal{I}_h=\mathcal{I}_h(\mathcal{T}_h)$ is a function of the coordinates of the vertices of mesh $\mathcal{T}_h\subset \mathbb{R}^d$, i.e.,
\[
\mathcal{I}_h(\mathcal{T}_h) = \mathcal{I}_h(\bm{x}_1,...,\bm{x}_{N_v}),\quad \bm{x}_i=\left[\bm{x}_{i}^{(1)},\bm{x}_{i}^{(2)}...,\bm{x}_{i}^{(d)}\right]^{T},\quad i = 1, ..., N_v
\]
where $\V{x}_{i}^{(\ell)},~\ell = 1,...,d$ denotes the $\ell$-th component of the $i$-th vertex $\V{x}_{i} \in S\subset\mathbb{R}^d$.
We have
\begin{equation}
\label{Ix-0}
\frac{\partial \mathcal{I}_h}{\partial \bm{x}_i} = \sum_{K\in \mathcal{T}_h}   |\hat{K}| \frac{\partial G_K}{\partial \bm{x}_i}
=  \sum_{K \in \omega_i} |\hat{K}|\frac{\partial G_K}{\partial \bm{x}_{i}},\quad i = 1, ..., N_v,
\end{equation}
where $\omega_i$ is the element patch associated with $\bm{x}_i$.

Next, we give the analytical expressions for the derivatives of $G_K$ with
respect to the coordinates of all vertices of $K=\left\{\bm{x}_0^K,\bm{x}_{1}^K,...,\bm{x}_m^K\right\}$.

\begin{lemma}\label{lem:Gx}
There holds
{\em
\begin{align}
\frac{\partial G_K}{\partial [\V x_0^K,\V x_1^K, ..., \V x_m^K]}
&= \begin{bmatrix*}
-\V e^T\\
\mathbb{I}_{m\times m}
\end{bmatrix*}\frac{\partial G_K}{\partial E_K}
+\frac{1}{m+1}\sum_{j=0}^m \tr \left (\frac{\partial G_K}{\partial \mathbb{M}_K}  \mathbb{M}_{j}^{K} \right )
\begin{bmatrix*}
  \frac{\partial \phi_{j}^{K}}{\partial \x}\\
  \frac{\partial \phi_{j}^{K}}{\partial \x}\\
  \vdots \\
  \frac{\partial \phi_{j}^{K}}{\partial \x}
  \end{bmatrix*} ,
\label{der-x0m}
\end{align}
}
where $\frac{\partial \phi_{j}^{K}}{\partial \x},~j=0,1,...,m$ are given in Lemma \ref{lem-dphi},
$\V e^T = [1, ..., 1]\in \mathbb{R}^{1\times m}$, and
\begin{align}
\frac{\partial G_K}{\partial E_K}=
&-2 \left(E_K^T\mathbb{M}_K E_K\right)^{-1} \hat{E}^T\frac{\partial G_K}{\partial \J} \hat{E} \left(E_K^T\mathbb{M}_K E_K\right)^{-1}  E_K^T  \mathbb{M}_K
\nonumber\\
& -2\frac{\det(\hat{E})^{2}}{\det\left(E_K^T \mathbb{M}_K E_K\right)}~\frac{\partial G_K}{\partial \det\left(\J\right)} \left(E_K^T \mathbb{M}_K E_K\right)^{-1} E_K^T\mathbb{M}_K,  \label{dGdE-f}
\\
\frac{\partial G_K}{\partial \mathbb{M}_K} =
& - E_K \left(E_K^T\mathbb{M}_K E_K\right)^{-1} \hat{E}^T\frac{\partial G_K}{\partial \J} \hat{E} \left(E_K^T\mathbb{M}_K E_K\right)^{-1} E_K^T
\nonumber\\
& - \det(\mathbb{J}) \frac{\partial G_K}{\partial \det(\mathbb{J})} E_K\left(E_K^T \mathbb{M}_K E_K\right)^{-1} E_K^T.
\label{der-M-mesh}
\end{align}

\end{lemma}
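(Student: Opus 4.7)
The plan is a two-stage chain rule. Since $G_K = G_K(\J,\det\J)$ depends on the vertex positions only through the two matrix objects $E_K$ and $\mathbb{M}_K$, I would first compute $\partial G_K/\partial E_K$ and $\partial G_K/\partial \mathbb{M}_K$, treating these as independent matrix arguments, and only then translate into per-vertex derivatives.

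To derive both (\ref{dGdE-f}) and (\ref{der-M-mesh}), set $A = E_K^T \mathbb{M}_K E_K$, so that $\J = \hat{E} A^{-1}\hat{E}^T$ and $\det\J = \det(\hat{E})^2/\det A$. Applying Lemma \ref{dA}, specifically (\ref{derAinv}) and (\ref{der-detA}), yields $\delta\J = -\hat{E} A^{-1}(\delta A) A^{-1}\hat{E}^T$ and $\delta\det\J = -\det(\J)\,\tr\left(A^{-1}\delta A\right)$, which feed into the chain rule
\[
\delta G_K = \tr\left(\frac{\partial G_K}{\partial \J}\,\delta\J\right) + \frac{\partial G_K}{\partial \det\J}\,\delta\det\J.
\]
For (\ref{dGdE-f}), vary $E_K$ only, so $\delta A = \delta E_K^T\mathbb{M}_K E_K + E_K^T\mathbb{M}_K\delta E_K$; using the cyclic property of the trace together with the symmetry of $\mathbb{M}_K$, $A^{-1}$, and $\partial G_K/\partial\J$ (the last being a scalar multiple of the identity by the preceding lemma), both halves of $\delta A$ contribute identically, producing the factor $2$ in each term of (\ref{dGdE-f}). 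For (\ref{der-M-mesh}), vary $\mathbb{M}_K$ only, so $\delta A = E_K^T(\delta\mathbb{M}_K)E_K$; one cyclic permutation of the trace then reads off its two summands directly.

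To assemble (\ref{der-x0m}), I split the vertex derivative into an $E_K$-contribution and an $\mathbb{M}_K$-contribution. The $E_K$-contribution is immediate: since $E_K=[\bm{x}_1^K-\bm{x}_0^K,\ldots,\bm{x}_m^K-\bm{x}_0^K]$ is linear in the vertices, moving $\bm{x}_k^K$ for $k\ge 1$ changes only the $k$-th column of $E_K$, while moving $\bm{x}_0^K$ shifts every column by $-1$; stacking the resulting row derivatives for $k=0,1,\ldots,m$ produces the first (block-prefactor) term of (\ref{der-x0m}). For the $\mathbb{M}_K$-contribution I would adopt the natural piecewise-linear model $\mathbb{M}(\bm{x})\approx\sum_{j=0}^{m}\mathbb{M}_j^K\phi_j^K(\bm{x})$ on $K$, with the nodal values $\mathbb{M}_j^K$ held frozen during mesh motion, and take $\mathbb{M}_K$ to be this interpolant evaluated at the centroid $\bm{x}_c=(m+1)^{-1}\sum_k\bm{x}_k^K$. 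Since moving any single vertex $\bm{x}_k^K$ displaces the centroid by $(m+1)^{-1}\delta\bm{x}_k^K$ and each $\phi_j^K$ is affine, one obtains
\[
\frac{\partial\mathbb{M}_K}{\partial\bm{x}_k^K} \;=\; \frac{1}{m+1}\sum_{j=0}^{m}\mathbb{M}_j^K\,\frac{\partial\phi_j^K}{\partial\bm{x}},
\]
identical for every $k$; pairing with $\partial G_K/\partial\mathbb{M}_K$ through the trace then gives exactly the rank-one, ``column-of-copies'' second term of (\ref{der-x0m}).

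The main obstacle is the bookkeeping for scalar-by-matrix differentiation conventions, in particular the transposed shape $m\times d$ of $\partial G_K/\partial E_K$ and the several cyclic permutations of trace required to route each factor onto the correct side of $\delta E_K$ or $\delta\mathbb{M}_K$. A secondary subtlety is committing to the centroid-based vertex-interpolation model for $\mathbb{M}_K$: the ``all-vertices-receive-the-same-contribution'' structure of the second term of (\ref{der-x0m}) really encodes the statement that every vertex affects $\mathbb{M}_K$ only through the centroid, and this modeling choice is what makes the formula assume the stated form.
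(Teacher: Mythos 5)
Your proposal is correct and follows essentially the same route as the paper's proof: the same split into an $E_K$-variation and an $\mathbb{M}_K$-variation, the same use of Lemma \ref{dA} with cyclic/transpose trace identities and the symmetry of $\partial G_K/\partial \J$ to produce the factor $2$, the same column-wise bookkeeping yielding the block prefactor, and the same barycenter-based piecewise-linear model for $\mathbb{M}_K$ giving the identical rank-one contribution at every vertex. No gaps.
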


\begin{proof}
Recall that
\begin{equation}\label{Ih-00}
G_K = G_K\left(\J, \det\left(\J\right)\right)
= \theta~ \det\left(\J\right) ^{-\frac{1}{2}}\tr\left(\J\right)^{\frac{mp}{2}}
 + (1-2\theta) ~m^{\frac{mp}{2}} \det\left(\J\right)^{\frac{p-1}{2}},
\end{equation}
where $F_K'  = E_K \hat{E}^{-1}$, $\J = \FMF^{-1}=\hat{E} \left(E_K^T\mathbb{M}_K E_K\right)^{-1} \hat{E}^T$, and
$\det\left(\J\right) = \det(\hat{E})^{2}\det\left(E_K^T \mathbb{M}_K E_K\right)^{-1}$.
When $E_K = E_K(t)$ and $\M_K = \M_K(t)$ for some parameter $t$, we have
\begin{align*}
\frac{\partial G_K}{\partial t}
&= \tr \left (\frac{\partial G_K}{\partial E_K}  \frac{\partial E_K}{\partial t} \right )
+ \tr \left (\frac{\partial G_K}{\partial \mathbb{M}_K}  \frac{\partial \mathbb{M}_K}{\partial t} \right )
\\
&=\frac{\partial G_K}{\partial t}(I)+\frac{\partial G_K}{\partial t}(II).
\end{align*}
The part $\frac{\partial G_K}{\partial t}(I)$ stands for the case of $E_K = E_K(t)$ and $\M_K$ is independent of $t$, and the part $\frac{\partial G_K}{\partial t}(II)$ stands for $\M_K = \M_K(t)$ and $E_K$ is independent of $t$.

We first consider to compute $\frac{\partial G_K}{\partial t}(I)$, i.e., $E_K = E_K(t)$ and $\M_K$ is independent of $t$.
Using the chain rule \eqref{dfdt}, we have
\begin{align}
\frac{\partial G_K}{\partial t} (I) &
= \tr \left ( \frac{\partial G_K}{\partial \J} \frac{\partial \left(\left(F_K'\right)^T\mathbb{M}_KF_K'\right)^{-1}}{\partial t} \right )
+ \frac{\partial G_K}{\partial \det\left(\J\right)} \frac{\partial \det\left(\left(F_K'\right)^T\mathbb{M}_KF_K'\right)^{-1}}{\partial t}\notag
\\
& \label{dGdt}= \tr \left ( \frac{\partial G_K}{\partial \J} \hat{E} \frac{\partial \left(E_K^T\mathbb{M}_K E_K\right)^{-1} }
{\partial t} \hat{E}^T \right )
+\det(\hat{E})^{2} \frac{\partial G_K}{\partial \det\left(\J\right)} \frac{\partial \det\left(E_K^T \mathbb{M}_K E_K\right)^{-1}}{\partial t}.
\end{align}
Moreover, using  \eqref{derAinv} we get
\begin{equation}
\begin{split}
\frac{\partial \left(E_K^T\mathbb{M}_K E_K\right)^{-1} }{\partial t}
=-\left(E_K^T\mathbb{M}_K E_K\right)^{-1} \frac{\partial \left(E_K^T\mathbb{M}_K E_K\right) }
{\partial t}  \left(E_K^T\mathbb{M}_K E_K\right)^{-1}.
\end{split}
\end{equation}
Then, we have
\begin{align*}
& \tr \left ( \frac{\partial G_K}{\partial \J} \hat{E} \frac{\partial \left(E_K^T\mathbb{M}_K E_K\right)^{-1} }
{\partial t} \hat{E}^T \right )\\
 & =  -\tr \left ( \frac{\partial G_K}{\partial \J} \hat{E}\left(E_K^T\mathbb{M}_K E_K\right)^{-1} \frac{\partial \left(E_K^T\mathbb{M}_K E_K\right) }
{\partial t}  \left(E_K^T\mathbb{M}_K E_K\right)^{-1}\hat{E}^T\right )\\
& = -\tr \left ( \frac{\partial G_K}{\partial \J} \hat{E}\left(E_K^T\mathbb{M}_K E_K\right)^{-1} \left( \frac{\partial E_K^T}{\partial t}\mathbb{M}_K E_K+ E_K^T\mathbb{M}_K\frac{\partial E_K}{\partial t}\right) \left(E_K^T\mathbb{M}_K E_K\right)^{-1}\hat{E}^T\right ).
\end{align*}
Using the trace properties $\tr(A B) = \tr(B A)$ and $\tr(A^T) = \tr(A)$, we can simplify the above equality into
\begin{align*}
& \tr \left ( \frac{\partial G_K}{\partial \J} \hat{E} \frac{\partial \left(E_K^T\mathbb{M}_K E_K\right)^{-1} }
{\partial t} \hat{E}^T \right )
\\
& = - 2\tr \left ( \left(E_K^T\mathbb{M}_K E_K\right)^{-1} \hat{E}^T  \frac{\partial G_K}{\partial \J}
\hat{E} \left(E_K^T\mathbb{M}_K E_K\right)^{-1} E_K^T \M_K \frac{\partial E_K}{\partial t}
\right ),
\end{align*}
where we have used the fact that $\partial G_K/\partial \J$ is symmetric.

Furthermore, from \eqref{derAinv} and \eqref{der-detA}  we get
\begin{align}
\frac{\partial \det\left(E_K^T \mathbb{M}_K E_K\right)^{-1}}{\partial t}
&=\det\left(E_K^T \mathbb{M}_K E_K\right)^{-1}\tr\left(\left(E_K^T \mathbb{M}_K E_K\right)\frac{\partial \left(E_K^T \mathbb{M}_K E_K\right)^{-1}}{\partial t}\right)
\notag \\
&=-\det\left(E_K^T \mathbb{M}_K E_K\right)^{-1}
\tr\left(\frac{\partial \left(E_K^T \mathbb{M}_K E_K\right)}{\partial t}\left(E_K^T \mathbb{M}_K E_K\right)^{-1}\right)
\notag \\
& = - 2 \det\left(E_K^T \mathbb{M}_K E_K\right)^{-1}
\tr\left(\left(E_K^T \mathbb{M}_K E_K\right)^{-1} E_K^T\mathbb{M}_K\frac{\partial E_K}{\partial t}\right) .
\end{align}
Thus, when $E_K = E_K(t)$ (and $\M_K$ is independent of $t$), we have
\begin{align*}
\frac{\partial G_K}{\partial t}(I)
& = - 2\tr \left ( \left(E_K^T\mathbb{M}_K E_K\right)^{-1} \hat{E}^T  \frac{\partial G_K}{\partial \J}
\hat{E} \left(E_K^T\mathbb{M}_K E_K\right)^{-1} E_K^T \M_K \frac{\partial E_K}{\partial t}
\right )
\\
& \quad -2\frac{\det(\hat{E})^{2}}{\det\left(E_K^T \mathbb{M}_K E_K\right)}~\frac{\partial G_K}{\partial \det\left(\J\right)}~\tr\left(\left(E_K^T \mathbb{M}_K E_K\right)^{-1} E_K^T\mathbb{M}_K\frac{\partial E_K}{\partial t}\right) .
\end{align*}
From the chain rule, the above equality implies that
\begin{align*}
\frac{\partial G_K}{\partial E_K}(I)
& = - 2 \left(E_K^T\mathbb{M}_K E_K\right)^{-1} \hat{E}^T  \frac{\partial G_K}{\partial \J}
\hat{E} \left(E_K^T\mathbb{M}_K E_K\right)^{-1} E_K^T \M_K
\\
& \quad -2\frac{\det(\hat{E})^{2}}{\det\left(E_K^T \mathbb{M}_K E_K\right)}~\frac{\partial G_K}{\partial \det\left(\J\right)}
\left ( E_K^T \mathbb{M}_K E_K\right)^{-1} E_K^T\mathbb{M}_K .
\end{align*}

Recalling $E_K = \left[\bm{x}_{1}^K-\bm{x}_0^K, ..., \bm{x}_m^K-\bm{x}_0^K\right] $, we have
\begin{equation}\label{dGdE}
\frac{\partial G_K}{\partial E_K}(I)
=
\frac{\partial G_K}{\partial \left[\bm{x}_{1}^K-\bm{x}_0^K, ..., \bm{x}_m^K-\bm{x}_0^K\right]}
= \begin{bmatrix*}
\frac{\partial G_K}{\partial \left(\bm{x}_{1}^K-\bm{x}_0^K\right)} \\
\frac{\partial G_K}{\partial \left(\bm{x}_{2}^K-\bm{x}_0^K\right)} \\
\vdots \\
\frac{\partial G_K}{\partial \left(\bm{x}_m^K-\bm{x}_0^K\right)}
\end{bmatrix*}
= \begin{bmatrix*}
\frac{\partial G_K}{\partial \bm{x}_{1}^K} \\
\frac{\partial G_K}{\partial \bm{x}_{2}^K} \\
\vdots \\
\frac{\partial G_K}{\partial \bm{x}_m^K}
\end{bmatrix*},
\end{equation}
where $\frac{\partial G_K}{\partial \bm{x}_{j}^K} = \frac{\partial G_K}{\partial
\left(\bm{x}_j^K-\bm{x}_0^K\right)}\frac{\partial \left(\bm{x}_j^K-\bm{x}_0^K\right)}{\partial \bm{x}_j^K}$ has been used.
Moreover, we have
\[
\frac{\partial G_K}{\partial \bm{x}_{0}^K}(I)
 = \sum_{j=1}^{m}\frac{\partial G_K}{\partial \left(\bm{x}_j^K-\bm{x}_0^K\right)}\frac{\partial \left(\bm{x}_j^K-\bm{x}_0^K\right)}{\partial \bm{x}_0^K}
= -\sum_{j=1}^{m}\frac{\partial G_K}{\partial \left(\bm{x}_j^K-\bm{x}_0^K\right)}
=- \V e^T \frac{\partial G_K}{\partial E_K},
\]
where $\V e^T = [1, ..., 1]\in \mathbb{R}^{1\times m}$.
Then, we obtain
\begin{equation}\label{dGdx0m-1}
\frac{\partial G_K}{\partial \left[\bm{x}_0^K,\bm{x}_{1}^K,...,\bm{x}_m^K\right]}(I)
= \begin{bmatrix*}
\frac{\partial G_K}{\partial \bm{x}_0^K} \\
\frac{\partial G_K}{\partial \bm{x}_{1}^K} \\
\vdots \\
\frac{\partial G_K}{\partial \bm{x}_m^K}
\end{bmatrix*}
= \begin{bmatrix*}
\frac{\partial G_K}{\partial \bm{x}_0^K} \\
\frac{\partial G_K}{\partial E_K}
\end{bmatrix*}
= \begin{bmatrix*}
-\V e^T\\
\mathbb{I}_{m\times m}
\end{bmatrix*}\frac{\partial G_K}{\partial E_K}.
\end{equation}

Next, we consider to compute $\frac{\partial G_K}{\partial t}(II)$, i.e., $\M_K = \M_K(t)$ and $E_K$ is independent of $t$.
To simplify the computation, we assume that $\mathbb{M} = \mathbb{M}(\V x)$ is a piecewise linear function
defined on the current mesh, i.e.,
\[
\mathbb{M}_{K} = \sum\limits_{j=0}^m \mathbb{M}_{K}(\bm{x}_j^K) \phi_j^K = \sum\limits_{j=0}^m \mathbb{M}_{j}^{K}\phi_j^K,
\]
where $\phi_j^K$ is the linear basis function associated with the vertex $\V x_j^K$.
Denote the barycenter of $K$ by $\V x_K = \frac{1}{m+1}\sum\limits_{j=0}^m \V x_j^K$. Then, we have
\begin{align*}
\frac{\partial G_K}{\partial t}(II) = \tr \left (\frac{\partial G_K}{\partial \mathbb{M}_K}  \frac{\partial \mathbb{M}_K}{\partial t} \right )
& = \tr \left (\frac{\partial G_K}{\partial \mathbb{M}_K}
\sum_{\ell=1}^d \frac{\partial \mathbb{M}_K}{\partial \x^{(\ell)}}
\frac{\partial \x_K^{(\ell)}}{\partial t} \right )\quad
\\
& = \tr \left (\frac{\partial G_K}{\partial \mathbb{M}_K}  \sum_{\ell=1}^d
\sum_{j=0}^m \mathbb{M}_{j}^{K}\frac{\partial \phi_{j}^{K}}{\partial \x^{(\ell)}}
\frac{\partial \x_K^{(\ell)}}{\partial t}\right )
\\
& = \frac{1}{m+1}\sum_{j=0}^m \tr \left (\frac{\partial G_K}{\partial \mathbb{M}_K}  \mathbb{M}_{j}^{K} \right )
\frac{\partial \phi_{j}^{K}}{\partial \x} \frac{\partial \x_K}{\partial t} .
\end{align*}
Taking $t$ as the entries of $[\V x_0^K,\V x_1^K, ... , \V x_m^K]$ sequentially, we obtain
\begin{align}\label{dGdx0m-2}
\frac{\partial G_K}{\partial [\V x_0^K,\V x_1^K, ... , \V x_m^K]}(II)
 &=  \frac{1}{m+1}\sum_{j=0}^m \tr \left (\frac{\partial G_K}{\partial \mathbb{M}_K}  \mathbb{M}_{j}^{K} \right )
 \begin{bmatrix*}
 \frac{\partial \phi_{j}^{K}}{\partial \x} \\
  \frac{\partial \phi_{j}^{K}}{\partial \x}\\
  \vdots \\
  \frac{\partial \phi_{j}^{K}}{\partial \x}
  \end{bmatrix*}.
\end{align}

Summarizing the above results, we have
\begin{align}
\frac{\partial G_K}{\partial [\V x_0^K,\V x_1^K, ..., \V x_m^K]}
&= \begin{bmatrix*}
-\V e^T\\
\mathbb{I}_{m\times m}
\end{bmatrix*}\frac{\partial G_K}{\partial E_K}
+\frac{1}{m+1}\sum_{j=0}^m \tr \left (\frac{\partial G_K}{\partial \mathbb{M}_K}  \mathbb{M}_{j}^{K} \right )
\begin{bmatrix*}
  \frac{\partial \phi_{j}^{K}}{\partial \x}\\
  \frac{\partial \phi_{j}^{K}}{\partial \x}\\
  \vdots \\
  \frac{\partial \phi_{j}^{K}}{\partial \x}
  \end{bmatrix*} ,
\label{der-x0m-1}
\end{align}
where
\begin{equation}\label{dGdE-f-1}
\begin{split}
\frac{\partial G_K}{\partial E_K}=
&-2 \left(E_K^T\mathbb{M}_K E_K\right)^{-1} \hat{E}^T\frac{\partial G_K}{\partial \J} \hat{E} \left(E_K^T\mathbb{M}_K E_K\right)^{-1}  E_K^T  \mathbb{M}_K
\\& -2\frac{\det(\hat{E})^{2}}{\det\left(E_K^T \mathbb{M}_K E_K\right)}~\frac{\partial G_K}{\partial \det\left(\J\right)} \left(E_K^T \mathbb{M}_K E_K\right)^{-1} E_K^T\mathbb{M}_K.
\end{split}
\end{equation}
From Lemma \ref{lem-dphi}, we have
\begin{align}
\label{der-M-mesh-1}
\frac{\partial G_K}{\partial \mathbb{M}_K}
& = - E_K \left(E_K^T\mathbb{M}_K E_K\right)^{-1} \hat{E}^T\frac{\partial G_K}{\partial \J} \hat{E} \left(E_K^T\mathbb{M}_K E_K\right)^{-1} E_K^T
\nonumber\\
& - \det(\mathbb{J}) \frac{\partial G_K}{\partial \det(\mathbb{J})} E_K\left(E_K^T \mathbb{M}_K E_K\right)^{-1} E_K^T,
\\
\label{dphi-0}
& \begin{bmatrix*}
\frac{\partial \phi_{1}^{K}}{\partial \x}\\
 \vdots \\
 \frac{\partial \phi_{m}^{K}}{\partial \x}
\end{bmatrix*}
=(E_K^TE_K)^{-1}E_K^T,\quad
\frac{\partial \phi_{0}^{K}}{\partial \x} = -\sum_{j=1}^m\frac{\partial \phi_{j}^{K}}{\partial \x}.
\end{align}

\end{proof}

%%%% SECTION 4	
\section{Mesh nonsingularity}
\label{sec:mesh-non}
In this section, we study the nonsingularity of the mesh trajectory $\mathcal{T}_h(t)$ at time $t$ and the existence of limiting meshes as $t \to \infty$ for the unifying moving mesh equation \eqref{mmpde-unify}.
For notational simplicity, we assume in this section that $\hat{K}$ is taken to satisfy
$|\hat{K}| = \frac{1}{N}$ instead of being unitary.
This is because typically we expect $|K| = \mathcal{O}(\frac{1}{N})$ and this
assumption will likely lead to $F_K' = \mathcal{O}(1)$ and
$\mathcal{I}_h(\mathcal{T}_h(0))=\mathcal{O}(1)$.
Note that this assumption is for notational simplicity and does not affect the implementation and theoretical properties of
the moving mesh method.

Recall that the mesh energy function in a general form is
\begin{equation}\label{Ih-n}
\mathcal{I}_h = \sum_{K\in \mathcal{T}_h} |\hat{K}| G_K
= \sum_{K\in \mathcal{T}_h} |\hat{K}| G_K\left(\J, \det\left(\J\right)\right) ,
\end{equation}
where $\J= \FMF^{-1}$ and $\det\left(\J\right)$ are considered as independent variables,
\begin{equation}
G_K\left(\J, \det\left(\J\right)\right)
= \theta~ \det\left(\J\right) ^{-\frac{1}{2}}\tr\left(\J\right)^{\frac{mp}{2}}
 + (1-2\theta) m^{\frac{mp}{2}} \det\left(\J\right)^{\frac{p-1}{2}},
\label{GK-1}
\end{equation}
and $ p > 1$ and $\theta \in (0,1]$ are dimensionless parameters.
It is worth pointing out that while we focus on the function (\ref{GK-1}) in this work,
we carry out the analysis in this section for a more general function $G_K$ that is assumed to satisfy
the coercivity condition
\begin{equation}
\label{coer}
G_K \left(\mathbb{J}, \det\left(\mathbb{J}\right)\right) \ge \det\left(\J\right) ^{-\frac{1}{2}} \left[\alpha~ \tr(\J)^q
- \beta\right], \quad \quad \forall \det\left(\mathbb{J}\right ) \ge 0,
\end{equation}
where $q> \frac{m}{2}$, $\alpha>0$, and $\beta\ge 0$ are constants.
Notice that the function in (\ref{GK-1}) satisfies (\ref{coer}) with $\alpha = \theta$, $\beta = 0$, and $q = \frac{m p}{2}$
when $p > 1$ and $\theta \in (0,0.5]$.

From \eqref{mmpde-unify}, we can rewrite the unifying moving mesh equation as
\begin{equation}\label{mmpde-unify-n}
\frac{d \V{x}_i}{d t} = \begin{cases}
- \frac{P_i}{\tau} \left ( \frac{\partial \mathcal{I}_h}{\partial \V{x}_i} \right )^T , &\text{for
~} m=d \text{ and } d = 1, 2, \text{ or }3,
\\
- \frac{P_i}{\tau} \left( \left ( \frac{\partial \mathcal{I}_h}{\partial \V{x}_i} \right )^T- \left( \left ( \frac{\partial \mathcal{I}_h}{\partial \V{x}_i} \right )^T \cdot \bm{n}_i\right) \bm{n}_i\right), &\text{for
~} m=2 \text{ and } d = 3,
\\
- \frac{P_i}{\tau}\left( \left ( \frac{\partial \mathcal{I}_h}{\partial \V{x}_i} \right )^T \cdot \bm{\tau}_i\right) \bm{\tau}_i ,  &\text{for
~} m=1 \text{ and } d = 2 \text{ or } 3,
\end{cases}
\end{equation}
where $\V{n}_i$ and $\bm{\tau}_i$ stand for the unit normal and tangent vectors to $S$ at $\V{x}_i$, respectively.

\begin{lemma}
\label{lem:Id-n}
Along any mesh trajectory governed by the equation \eqref{mmpde-unify-n}, the energy function decreases as $t$ increases.
As a consequence, $\mathcal{I}_h(\mathcal{T}_h(t))\leq \mathcal{I}_h(\mathcal{T}_h(0))$ for any time $t>0$.
\end{lemma}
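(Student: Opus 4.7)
The plan is to differentiate $\mathcal{I}_h(\mathcal{T}_h(t))$ along the trajectory via the chain rule and show that the resulting expression is manifestly nonpositive, then integrate in time. Writing $\V{g}_i := (\partial\mathcal{I}_h/\partial \V{x}_i)^T$, we have
\begin{equation*}
\frac{d \mathcal{I}_h}{dt} = \sum_{i=1}^{N_v} \frac{\partial \mathcal{I}_h}{\partial \V{x}_i}\, \frac{d\V{x}_i}{dt} = \sum_{i=1}^{N_v} \V{g}_i \cdot \frac{d\V{x}_i}{dt},
\end{equation*}
so the task reduces to showing that each summand $\V{g}_i \cdot \frac{d\V{x}_i}{dt}$ is nonpositive under \eqref{mmpde-unify-n}.

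The key unifying observation is that in all three cases, \eqref{mmpde-unify-n} can be written as $\frac{d\V{x}_i}{dt} = -\frac{P_i}{\tau}\, \mathcal{P}_i\, \V{g}_i$, where $\mathcal{P}_i$ is the orthogonal projection matrix onto the relevant subspace: $\mathcal{P}_i = \mathbb{I}$ in Case 1 ($m=d$); $\mathcal{P}_i = \mathbb{I} - \V{n}_i\V{n}_i^T$ in Case 2 (projection onto the tangent plane of the surface at $\V{x}_i$); and $\mathcal{P}_i = \V{\tau}_i\V{\tau}_i^T$ in Case 3 (projection onto the tangent line to the curve at $\V{x}_i$). In each case $\mathcal{P}_i$ is symmetric and idempotent, so
\begin{equation*}
\V{g}_i \cdot \frac{d\V{x}_i}{dt} = -\frac{P_i}{\tau}\, \V{g}_i^T \mathcal{P}_i \V{g}_i = -\frac{P_i}{\tau}\, \V{g}_i^T \mathcal{P}_i^T \mathcal{P}_i \V{g}_i = -\frac{P_i}{\tau}\, \|\mathcal{P}_i \V{g}_i\|^2 \le 0,
\end{equation*}
using that $P_i > 0$ (from \eqref{P-form} and the positive definiteness of $\mathbb{M}$) and $\tau > 0$. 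For fixed boundary vertices the contribution is zero by \eqref{mesh-fixed}, and for sliding boundary vertices the constraint \eqref{mesh-slide} amounts to composing with one more orthogonal projection, which preserves the same sign argument.

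Summing over $i$ therefore yields $\frac{d\mathcal{I}_h}{dt} \le 0$ along any smooth segment of the trajectory, and integrating from $0$ to $t$ gives $\mathcal{I}_h(\mathcal{T}_h(t)) \le \mathcal{I}_h(\mathcal{T}_h(0))$. The only subtlety to flag is that the argument is purely kinematic (it does not use the specific form \eqref{GK-1} of $G_K$, only the gradient-flow-with-projection structure of \eqref{mmpde-unify-n}); the hard part is mainly notational, namely recognizing the three cases as a single gradient descent projected onto the admissible motions tangent to $S$, after which nonpositivity is immediate from $\mathcal{P}_i^T\mathcal{P}_i = \mathcal{P}_i$.
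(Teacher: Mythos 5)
Your proof is correct and follows essentially the same approach as the paper: differentiate $\mathcal{I}_h$ along the trajectory via the chain rule and verify that each nodal contribution is nonpositive. The only difference is presentational --- the paper verifies the three cases separately (using, e.g., $\|\V{g}_i\|^2 - (\V{g}_i\cdot\V{n}_i)^2 \ge 0$ for the surface case), whereas you unify them by recognizing each right-hand side as $-\tfrac{P_i}{\tau}\mathcal{P}_i\V{g}_i$ for a symmetric idempotent $\mathcal{P}_i$, which is a clean and equivalent way to organize the same computation.
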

\begin{proof}
From \eqref{mmpde-unify-n}, for the case of $m=d$ and $ d = 1, 2$, or 3,
we have
\begin{align*}
\frac{d\mathcal{I}_h}{dt}
& = \sum_{i} \frac{\partial \mathcal{I}_h}{\partial  \V x_i} \frac{d \V x_i}{dt}
 = -\sum_{i} \frac{P_i}{\tau}\frac{\partial \mathcal{I}_h}{\partial  \V x_i}
\left ( \frac{\partial \mathcal{I}_h}{\partial \V{x}_i} \right )^T
\\& = -\sum_{i} \frac{P_i}{\tau} \left \| \frac{\partial \mathcal{I}_h}{\partial \V{x}_i} \right \|^2  \le 0.
\end{align*}
For the case of $m=2$ and $ d = 3$, we have
\begin{align*}
\frac{d\mathcal{I}_h}{dt} & = \sum_{i} \frac{\partial \mathcal{I}_h}{\partial  \V x_i} \frac{d \V x_i}{dt}
= -\sum_{i} \frac{P_i}{\tau}\frac{\partial \mathcal{I}_h}{\partial  \V x_i} \left [ \left ( \frac{\partial \mathcal{I}_h}{\partial \V{x}_i} \right )^T - \left (\left (\frac{\partial \mathcal{ I}_h}{\partial \V{x}_i}\right )^T \cdot \V{n}_i \right ) \V{n}_i \right ]\\
& = -\sum_{i} \frac{P_i}{\tau} \left [ \left \| \frac{\partial \mathcal{I}_h}{\partial \V{x}_i} \right \|^2 - \left (\left (\frac{\partial \mathcal{I}_h}{\partial \V{x}_i}\right )^T \cdot \V{n}_i \right )^2 \right ]
 \le 0.
\end{align*}
For the case of $m=1 $  and $ d = 2$ or 3, we have
\begin{align*}
\frac{d\mathcal{I}_h}{dt}
& = \sum_{i} \frac{\partial \mathcal{I}_h}{\partial  \V x_i} \frac{d \V x_i}{dt}
 = -\sum_{i} \frac{P_i}{\tau}\frac{\partial \mathcal{I}_h}{\partial  \V x_i} \left [
\left( \left ( \frac{\partial \mathcal{I}_h}{\partial \V{x}_i} \right )^T \cdot \bm{\tau}_i\right) \bm{\tau}_i
\right ]
\\& = -\sum_{i} \frac{P_i}{\tau} \left( \left ( \frac{\partial \mathcal{I}_h}{\partial \V{x}_i} \right )^T \cdot \bm{\tau}_i\right)^2
 \le 0.
\end{align*}
Thus, $\mathcal{I}_h\left(\mathcal{T}_h(t)\right)$ is a decreasing function of $t$.
\end{proof}

\begin{thm}
\label{thm:K-ns}
We assume that the mesh energy function satisfies the coercivity condition (\ref{coer}) and
the metric tensor $\mathbb{M}(\bm{x})$ is sufficiently smooth and bounded below and above as
\begin{equation}
\label{Mbound-0}
\underline{\rho} \; \mathbb{I}\le\mathbb{M}(\bm{x}) \le \overline{\rho} \; \mathbb{I},
\quad \forall \bm{x}\in S\subset \mathbb{R}^d,
\end{equation}
where $\underline{\rho}$ and $\overline{\rho}$ are two positive constants.
Then, we have
%We also assume that $\hat{K}$ is equilateral and $|\hat{K}| = \frac{1}{N}$.
\begin{enumerate}
  \item[(i)] The elements of the mesh trajectory $\mathcal{T}_h(t)$
  of the unifying moving mesh equation \eqref{mmpde-unify-n} have positive size for all time $t > 0$ if they have positive size initially.
  \item[(ii)] The minimum altitudes of the elements of $\mathcal{T}_h(t)$ in the metric $\mathbb{M}$
  and their areas in the Euclidean metric are bounded below by
\begin{align}
\label{amin}
a_{K,\mathbb{M}} &\ge C_1 ~\left[\mathcal{I}_h(\mathcal{T}_h(0))
+ \beta \left(\overline{\rho}\right)^{\frac{d}{2}} |S|\right]^{-\frac{1}{2q-m}} ~N^{-\frac{2q}{m(2q-m)}},
\\[2pt]
\label{kmin}
|K| & \ge C_2~\left[\mathcal{I}_h(\mathcal{T}_h(0)) + \beta \left(\overline{\rho}\right)^{\frac{d}{2}} |S|\right]^{-\frac{m}{2q-m}} ~N^{-\frac{2q}{2q-m}}~\left(\overline{\rho}\right)^{-\frac{d}{2}},
\end{align}
where
\begin{equation}\label{coeff}
C_1 = \left(\frac{\alpha~ \lambda^{2q}}{m^{\frac{m}{2}}m!}\right)^{\frac{m}{2q-m}}, \qquad C_2 = \dfrac{\left(C_1\right)^{m}}{m^{\frac{m}{2}}m!},
\qquad \lambda = \frac{\sqrt{m+1}~(m!)^{\frac{1}{m}}}{\sqrt{m}~(m+1)^{\frac{1}{2m}}}.
\end{equation}
\end{enumerate}
\end{thm}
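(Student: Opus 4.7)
The plan is to deduce both bounds directly from the energy decay result in Lemma~\ref{lem:Id-n} combined with the coercivity assumption \eqref{coer}, turning the single scalar inequality $\mathcal{I}_h(\mathcal{T}_h(t)) \le \mathcal{I}_h(\mathcal{T}_h(0))$ into a per-element lower bound on $a_{K,\mathbb{M}}$, and then converting this to a lower bound on $|K|$ via Lemma~\ref{lem:volKM-aK} together with the upper bound on $\mathbb{M}$.

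\textbf{Step 1 (extracting the key sum).} Since $\det(\mathbb{J})^{-\frac{1}{2}} = \det((F_K')^T\mathbb{M}_K F_K')^{\frac{1}{2}} = |K|_{\mathbb{M}}/|\hat{K}|$, the coercivity condition \eqref{coer} multiplied by $|\hat{K}|$ and summed over all elements gives
\begin{equation*}
\mathcal{I}_h(\mathcal{T}_h) \;\ge\; \alpha \sum_{K\in \mathcal{T}_h} |K|_{\mathbb{M}}\,\tr(\mathbb{J})^{q} \;-\; \beta \sum_{K\in \mathcal{T}_h} |K|_{\mathbb{M}}.
\end{equation*}
The second sum is bounded above by $\bar{\rho}^{d/2}|S|$ using $\mathbb{M}_K \le \bar\rho \mathbb{I}$ and the fact that $\sum_{K}|K|$ equals (or closely approximates) the $m$-dimensional measure $|S|$.

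\textbf{Step 2 (isolating one element).} For any fixed $K_0 \in \mathcal{T}_h(t)$, all terms in $\sum_K |K|_{\mathbb{M}} \tr(\mathbb{J})^q$ are nonnegative, so discarding all but $K_0$ and invoking Lemma~\ref{lem:Id-n} yields
\begin{equation*}
\alpha\, |K_0|_{\mathbb{M}}\, \tr(\mathbb{J}(K_0))^{q} \;\le\; \mathcal{I}_h(\mathcal{T}_h(t)) + \beta\,\bar{\rho}^{d/2}|S| \;\le\; \mathcal{I}_h(\mathcal{T}_h(0)) + \beta\,\bar{\rho}^{d/2}|S|.
\end{equation*}

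\textbf{Step 3 (lower-bounding the left side).} Here the key geometric inputs enter. Using $\tr(\mathbb{J}) \ge \|\mathbb{J}\|$ (sum of positive eigenvalues dominates the largest), Lemma~\ref{lem:h-M} gives $\tr(\mathbb{J}) \ge \hat{a}^2/a_{K,\mathbb{M}}^2$, and Lemma~\ref{lem:volKM-aK} gives $|K|_{\mathbb{M}} \ge a_{K,\mathbb{M}}^m/(m^{m/2} m!)$. Taking $\hat{K}$ to be a regular $m$-simplex with $|\hat{K}| = 1/N$ produces $\hat{a} = \lambda\, N^{-1/m}$ with $\lambda$ as in \eqref{coeff}. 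Multiplying these two inequalities together:
\begin{equation*}
|K|_{\mathbb{M}}\, \tr(\mathbb{J})^{q} \;\ge\; \frac{\lambda^{2q}\, N^{-2q/m}}{m^{m/2}\, m!}\; a_{K,\mathbb{M}}^{m-2q}.
\end{equation*}
Since $q > m/2$, the exponent $m-2q < 0$, so solving for $a_{K,\mathbb{M}}$ in the inequality from Step~2 yields \eqref{amin}. Bound \eqref{kmin} then follows by applying Lemma~\ref{lem:volKM-aK} once more and using $|K|_{\mathbb{M}} \le \bar\rho^{d/2}|K|$ (from $(F_K')^T \mathbb{M}_K F_K' \le \bar\rho\, (F_K')^T F_K'$). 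Part~(i) is an immediate consequence of \eqref{kmin}, since the right-hand side is strictly positive whenever $\mathcal{I}_h(\mathcal{T}_h(0))$ is finite, which is guaranteed by the assumption that the initial elements have positive size.

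\textbf{Main obstacle.} The algebraic manipulation is routine; the conceptual crux is recognizing that the coercivity bound, when paired with $\det(\mathbb{J})^{-1/2} = |K|_{\mathbb{M}}/|\hat{K}|$, produces exactly the factor $|K|_{\mathbb{M}}$ needed to form the sum $\sum_K |K|_{\mathbb{M}} \tr(\mathbb{J})^q$, which can then be estimated elementwise. A secondary subtlety is the correct choice of the reference element: one must set $\hat{K}$ to be regular with $|\hat{K}| = 1/N$ so that the minimum altitude $\hat{a}$ is as large as possible, yielding the explicit constant $\lambda$ of \eqref{coeff} rather than an implicit dependence on an arbitrary reference simplex. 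The proof requires no modification for the projection terms in \eqref{mmpde-unify-n}, because it uses only the monotonicity statement already established in Lemma~\ref{lem:Id-n} and not the specific form of the velocity field.
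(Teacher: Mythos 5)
Your proposal is correct and follows essentially the same route as the paper: coercivity plus the energy decay of Lemma~\ref{lem:Id-n} gives a bound on $\sum_K |K|_{\mathbb{M}}\tr(\J)^q$, which is then estimated elementwise using Lemma~\ref{lem:h-M}, Lemma~\ref{lem:volKM-aK}, $\tr(\J)\ge\|\J\|$, the regular reference simplex with $|\hat K|=1/N$, and finally $|K|_{\mathbb{M}}\le\overline{\rho}^{\,d/2}|K|$. The only (minor) difference is that for part (i) the paper additionally invokes the boundedness of the mesh velocities from Lemma~\ref{lem:Gx} to argue that $|K|$ evolves continuously and hence cannot jump past the positive lower bound, whereas you treat (i) as an immediate consequence of \eqref{kmin}.
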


\begin{proof}
From Lemma \ref{lem:Gx} it is not difficult to see that
the magnitude of the mesh velocities is bounded from above when $|K|$ is bounded from below by a positive constant.
As a consequence, the mesh vertices will move continuously with time, and $|K|$ will not jump over the bound to become negative.
Hence, $|K|$ will stay positive if it is so initially. Thus, the key of this proof is to show that
(\ref{kmin}) holds for any $t > 0$.

Using \eqref{Mbound-0} and Lemma \ref{lem:volK} and Lemma \ref{lem:volK-M}, we have
\begin{align}
|K|_{\mathbb{M}} &
=|\hat{K}|\det\left(\J\right) ^{-\frac{1}{2}}= |\hat{K}|\det\FMF^{\frac12}
\notag
\\
&\le \left(\overline{\rho}\right)^{\frac{d}{2}}|\hat{K}|
\det\left(\left(F_K'\right)^TF_K'\right)^{\frac12} = \left(\overline{\rho}\right)^{\frac{d}{2}}|K|.
\label{Km}
\end{align}
Summing the above inequality overall $K$, we get
\begin{equation}
\label{sumK-M}
\sum_{K\in \mathcal{T}_h} |K|_{\mathbb{M}}
\le \left(\overline{\rho}\right)^{\frac{d}{2}}|S|.
\end{equation}
Moreover, from Lemma \ref{lem:volKM-aK} we have
\begin{align}\label{aK-M-n}
|K|_{\mathbb{M}}=|\hat{K}|\det\left(\J\right) ^{-\frac{1}{2}}\geq \frac{a_{K,\mathbb{M}}^{m}}{m^{\frac{m}{2}}m!}.
 \end{align}
This means that
\begin{align*} \frac{a_{K,\mathbb{M}}^{m}}{m^{\frac{m}{2}}m!}
\leq |K|_{\mathbb{M}}
\leq\left(\overline{\rho}\right)^{\frac{d}{2}}|K|
 \end{align*}
and
\begin{align}\label{Keq-n}
|K|\geq \left(\overline{\rho}\right)^{-\frac{d}{2}}
\frac{a_{K,\mathbb{M}}^{m}}{m^{\frac{m}{2}}m!}.
\end{align}

From the above results and the coercivity condition (\ref{coer}) and Lemma \ref{lem:eq-tr-norm}, we get
\begin{align*}
\mathcal{I}_h\left(\mathcal{T}_h(t)\right)
&= \sum_{K\in \mathcal{T}_h} |\hat{K}| G_K\left(\J, \det\left(\J\right)\right)
\\& \ge \sum_{K\in \mathcal{T}_h} |\hat{K}| \det\left(\J\right) ^{-\frac{1}{2}} \left[\alpha~ \tr(\J)^q
- \beta\right]\\
& \ge \alpha\sum_{K\in \mathcal{T}_h} |\hat{K}| \det\left(\J\right) ^{-\frac{1}{2}}  \left\|\J\right\|^q
- \beta \left(\overline{\rho}\right)^{\frac{d}{2}} |S|\\
& \ge \alpha \sum_{K\in \mathcal{T}_h}  |\hat{K}|\det\left(\J\right) ^{-\frac{1}{2}} \frac{\hat{a}^{2q}}{a_{K,\mathbb{M}}^{2q}}
 - \beta \left(\overline{\rho}\right)^{\frac{d}{2}} |S| .
\end{align*}
Using \eqref{aK-M-n}, we have
\begin{equation}
\mathcal{I}_h\left(\mathcal{T}_h(t)\right) +\beta \left(\overline{\rho}\right)^{\frac{d}{2}} |S|
\ge
\alpha \sum_{K\in \mathcal{T}_h}  \frac{a_{K,\mathbb{M}}^{m}}{m^{\frac{m}{2}}m!}~ \frac{\hat{a}^{2q}}{a_{K,\mathbb{M}}^{2q}}
=\frac{\alpha~\hat{a}^{2q}}{m^{\frac{m}{2}}m!} \sum_{K\in \mathcal{T}_h} \dfrac{1}{a_{K,\mathbb{M}}^{2q-m}}~.
\end{equation}
Therefore, from Lemma \ref{lem:Id-n} we have
\begin{equation}\label{akm2}
a_{K,\mathbb{M}}^{2q-m} \ge \frac{\alpha ~\hat{a}^{2 q}}{m^{\frac{m}{2}}m!}\left(\mathcal{I}_h\left(\mathcal{T}_h(0)\right) +\beta \left(\overline{\rho}\right)^{\frac{d}{2}} |S|\right)^{-1}.
\end{equation}
Moreover, from the assumption that $\hat{K}$ is equilateral and $|\hat{K}| = \frac{1}{N} $ it follows that
\begin{equation}
\label{ahat}
\hat{a} = \frac{\sqrt{m+1}~(m!)^{\frac{1}{m}}}{\sqrt{m}~(m+1)^{\frac{1}{2m}}} ~N^{-\frac{1}{m}}.
\end{equation}
Let $\lambda = \frac{\sqrt{m+1}~(m!)^{\frac{1}{m}}}{\sqrt{m}~(m+1)^{\frac{1}{2m}}} $.
Substituting \eqref{ahat} into \eqref{akm2}, we get (\ref{amin}).
Moreover, substituting \eqref{akm2} into \eqref{Keq-n} gives (\ref{kmin}).

\end{proof}

\vspace{8pt}

\begin{rem}
Recall that the condition (\ref{coer}) is satisfied by the meshing function (\ref{GK-1})
for $\theta \in (0,0.5]$ and $p>1$. In this case,
the role of the parameter $p$ can be explained from (\ref{amin}).
Indeed, the inequality (\ref{amin}) becomes
\begin{equation*}
a_{K,\mathbb{M}} \ge C_1 ~\left[\mathcal{I}_h(\mathcal{T}_h(0)) + \beta \left(\overline{\rho}\right)^{\frac{d}{2}} |S|\right]^{-\frac{1}{m(p-1)}} ~N^{-\frac{p}{m(p-1)}} \to C_1~N^{-\frac{1}{m}}, \quad p\to \infty.
\end{equation*}
Since $N^{-\frac{1}{m}}$ represents the average diameter of the elements, the above inequality implies
that the mesh becomes more uniform as $p$ gets larger.
\qed
\end{rem}

\begin{thm}
\label{thm:limiting}
Under the assumptions of Theorem \ref{thm:K-ns}, for any nonsingular initial mesh, the mesh trajectory $\{\mathcal{T}_h(t), t>0\}$ of the unifying moving mesh equation \eqref{mmpde-unify-n} has the following properties.
\begin{enumerate}
	\item[(i)] $\mathcal{I}_h(\mathcal{T}_h(t))$ has a limit as $t\to \infty$, i.e.,
	\begin{equation}
\lim_{t\to \infty} \mathcal{I}_h(\mathcal{T}_h(t)) = L,
\end{equation}
where $L$ is a bounded constant.
	\item[(ii)] The mesh trajectory has limiting meshes, all of which are nonsingular and satisfy (\ref{amin}) and (\ref{kmin}).
	\item[(iii)] The limiting meshes are critical points of $\mathcal{I}_h$, i.e., they satisfy
	\begin{equation}\label{critical}
\frac{\partial \mathcal{I}_h}{\partial \V x_i} = 0, \quad i = 1, \dots, N_v.
\end{equation}
\end{enumerate}
\end{thm}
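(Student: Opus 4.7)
For part (i), I would combine monotonicity with boundedness below. Lemma~\ref{lem:Id-n} already shows $d\mathcal{I}_h/dt\le 0$, so $\mathcal{I}_h(\mathcal{T}_h(t))$ is non-increasing. To produce a finite limit, I would bound $\mathcal{I}_h$ from below: the coercivity condition~(\ref{coer}) combined with~(\ref{sumK-M}) from the proof of Theorem~\ref{thm:K-ns} gives
\[
\mathcal{I}_h(\mathcal{T}_h(t)) \;\ge\; -\beta\,\overline{\rho}^{\,d/2}|S|,
\]
and for the specific functional~(\ref{GK-1}) with $\theta\in(0,1/2]$ and $p>1$ every term is in fact non-negative. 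A bounded monotone function of $t$ converges, which yields $L=\lim_{t\to\infty}\mathcal{I}_h(\mathcal{T}_h(t))$.

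For part (ii), compactness does the work. Since $S$ is bounded, each vertex $\bm{x}_i(t)$ lies in a compact subset of $\mathbb{R}^d$, so the whole configuration $\mathcal{T}_h(t)$ stays in a bounded subset of $(\mathbb{R}^d)^{N_v}$. Theorem~\ref{thm:K-ns} provides the $t$-uniform lower bounds~(\ref{amin}) on altitudes and~(\ref{kmin}) on element sizes. Hence the trajectory actually lies in a compact set of nonsingular mesh configurations. Bolzano--Weierstrass produces, from any sequence $t_k\to\infty$, a subsequence (still denoted $t_k$) with $\mathcal{T}_h(t_k)\to\mathcal{T}_h^*$, and the bounds~(\ref{amin})--(\ref{kmin}) pass to the limit by continuity, so $\mathcal{T}_h^*$ is itself nonsingular and inherits these bounds.

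For part (iii), my plan is a LaSalle-type invariance argument. On the compact set of nonsingular meshes the right-hand side of~(\ref{mmpde-unify-n}) is a smooth function of the vertex coordinates (the projections $\bm{n}_i$, $\bm{\tau}_i$ and the scalars $P_i$ depend smoothly on the mesh whenever $E_K^T\mathbb{M}_K E_K$ stays uniformly invertible), so the flow admits continuous dependence on initial data. Given a limit mesh $\mathcal{T}_h^*=\lim_k \mathcal{T}_h(t_k)$, continuity of $\mathcal{I}_h$ gives $\mathcal{I}_h(\mathcal{T}_h^*)=L$, and by continuous dependence the flow $\tilde{\mathcal{T}}_h(s)$ with initial datum $\mathcal{T}_h^*$ is the pointwise limit of the shifted trajectories $\mathcal{T}_h(t_k+s)$ on every compact interval $s\in[0,T]$. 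Therefore $\mathcal{I}_h(\tilde{\mathcal{T}}_h(s))\equiv L$, and differentiating together with the identities for $d\mathcal{I}_h/dt$ established in Lemma~\ref{lem:Id-n} forces every non-negative summand
\[
\frac{P_i}{\tau}\Bigl\|\Pi_i\bigl(\tfrac{\partial \mathcal{I}_h}{\partial \bm{x}_i}\bigr)^{T}\Bigr\|^{2}
\]
to vanish at $\tilde{\mathcal{T}}_h(0)=\mathcal{T}_h^*$, where $\Pi_i$ is the identity, the surface-tangent projection, or the curve-tangent projection according to the case. This is exactly the critical-point condition~(\ref{critical}) (understood, when $m<d$, as the vanishing of the component of $\partial \mathcal{I}_h/\partial \bm{x}_i$ tangential to $S$, which is the only component the constrained flow acts on).

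The main obstacle I expect is the rigor of the LaSalle step, in particular verifying that (a) the right-hand side of~(\ref{mmpde-unify-n}) is Lipschitz on the closed set of meshes whose minimum altitude exceeds the $t$-uniform lower bound from~(\ref{amin}), so that continuous dependence of the flow holds uniformly on any finite interval, and (b) the normal/tangent vectors $\bm{n}_i$, $\bm{\tau}_i$, which are reconstructed from the mesh in the absence of an analytic parametrisation, can be chosen as smooth functions of the vertex coordinates on that set. With (a) and (b) in place, the continuity/invariance argument above closes the proof.
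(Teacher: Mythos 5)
Your proposal is correct, but note that the paper does not actually write out a proof of Theorem~\ref{thm:limiting}: it simply defers to \cite[Theorem 4.9]{HK2018}. Your parts (i) and (ii) follow the same outline as that reference --- monotonicity from Lemma~\ref{lem:Id-n} plus the lower bound $\mathcal{I}_h \ge -\beta\,\overline{\rho}^{\,d/2}|S|$ extracted from (\ref{coer}) and (\ref{sumK-M}) give convergence of the energy, and boundedness of $S$ together with the $t$-uniform bounds (\ref{amin})--(\ref{kmin}) confine the trajectory to a compact set of nonsingular configurations, so limit meshes exist, are nonsingular, and inherit the bounds. Where you genuinely diverge is part (iii): the cited argument is Barbalat-style --- integrate $d\mathcal{I}_h/dt$ over $[0,\infty)$ to see that the dissipation $\sum_i \frac{P_i}{\tau}\bigl\|\Pi_i(\partial\mathcal{I}_h/\partial\V{x}_i)^T\bigr\|^2$ has finite integral, then use uniform continuity of this quantity (which follows from Lipschitz bounds on the compact set from part (ii)) to conclude it tends to zero along the whole trajectory --- whereas you invoke LaSalle invariance via continuous dependence of the flow on initial data. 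Both routes need exactly the same regularity input (a Lipschitz right-hand side of (\ref{mmpde-unify-n}) on the set of meshes with altitudes bounded below, your obstacle (a)), so neither is cheaper; the Barbalat route is slightly more self-contained since it avoids having to solve the ODE forward from the limit mesh. Your parenthetical remark at the end of part (iii) is worth keeping: for $m<d$ the flow only controls the tangential component, so (\ref{critical}) can only be established in the projected sense $\Pi_i(\partial\mathcal{I}_h/\partial\V{x}_i)^T=0$, a qualification the theorem statement itself glosses over.
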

\begin{proof}
The proof is similar to that of \cite[Theorem 4.9]{HK2018}.
\end{proof}

%\vspace{8pt}

\section{Numerical examples}
\label{sec:numerical}

In this section, we present numerical results for curves ($m=1$) and surfaces ($m=2$) in $\mathbb{R}^2$ and $\mathbb{R}^3$
to demonstrate the ability of the unifying moving mesh method described in the previous sections to move and concentrate
the mesh points.
In our computation, initial meshes are obtained through the analytical expressions of the corresponding curves/surfaces
with random perturbation.

The metric tensor $\mathbb{M}$ is assumed to be symmetric and uniformly positive definite on geometry $S$. It is used to control the size, shape, and orientation of the elements of the mesh to be generated.
The metric tensor is typically defined based on error estimates or indicators \cite{H2005-M,HR2011-book}, or other physical and geometric considerations \cite{AvaryH2020}.
In this work, we consider two metric tensors.
The first is the identity metric tensor $\mathbb{M}=\mathbb{I}$, with which the mesh moves to become more uniform in the Euclidean metric.
The second one is a curvature-based metric tensor defined as $\mathbb{M} = \bar{k}\; \mathbb{I}$,
where $\bar{k}$ is the mean curvature of the underlying curve/surface \cite{Curvature}.
With this metric tensor, we expect the final mesh to concentrate more vertices in regions with larger mean curvatures and fewer vertices in areas with smaller mean curvatures.

In our computation, the mean curvature and the normal vector (for surface) or the tangential vector (for curve)
are approximated numerically regardless of the availability of an analytical parametric representation of the underlying curve/surface.
(The reader is referred to, e.g., \cite{Ksurface,Kcurve} for the numerical approximation of the mean curvature.)
It is worth pointing out that the unifying moving mesh method does not require the availability of an
analytical parametric representation of the underlying curve/surface. Numerical approximation of
the needed information on the normal/tangential vector for $S$
can be obtained using the initial or current mesh representation of $S$.

\begin{example}\label{ex:ellipse}
(2D Circle and Ellipse)
\end{example}
In this first example, we test the unifying moving mesh method with the Euclidean and curvature-based metric tensors
for circles and ellipses in $\mathbb{R}^2$ that have the parametric representation as
\begin{equation}\label{circle-m1}
x(s) = r\cos(s), \quad
y(s) = \sin(s),\quad s\in [0,2\pi],\quad r\geq 1,
\end{equation}
with $r=1$ (unit circle) and $r=6$ (ellipse).
Two randomly perturbed nonuniform initial (curve) meshes are shown in Figs.~\ref{Fig:Circle}(a) and \ref{Fig:Ellipse}(a).
It is clear that the initial meshes are distributed unevenly along the curve.

The final meshes for the Euclidean metric tensor $\mathbb{M} = \mathbb{I}$ and
$\mathbb{M} = \bar{k}\,\mathbb{I}$ are shown in Fig.~\ref{Fig:Circle}.
We can see that the final meshes in Fig.~\ref{Fig:Circle} for the unit circle are distributed evenly along the curve for
both metric tensors. These results demonstrate the effective control of mesh concentration by the unifying moving mesh method
since with $\mathbb{M} = \mathbb{I}$, the method intends to make the final mesh more uniformly distributed.
The meshes in Fig.~\ref{Fig:Circle}(b) and (c) are almost identical since the circle has constant curvature
and the Euclidean and curvature-based metric tensors are the same for this case.

The situation is different for the ellipse case. The final meshes are shown in Fig.~\ref{Fig:Ellipse} for this case.
The mesh (in Fig.~\ref{Fig:Ellipse}(b)) for the Euclidean metric is evenly distributed along the ellipse while
the mesh (in Fig.~\ref{Fig:Ellipse}(c)) for the curvature-based metric tensor $\mathbb{M} = \bar{k}\,\mathbb{I}$ has
higher concentration near the ends at $(\pm 6, 0)$ where the curvature is larger.
Again, these results confirm the ability of the unifying moving mesh method to control effectively
the mesh concentration through the metric tensor.

 \begin{figure}[H]
 \centering
 \subfigure[Initial mesh]{
 \includegraphics[width=0.31\textwidth, trim=20 0 20 20,clip]{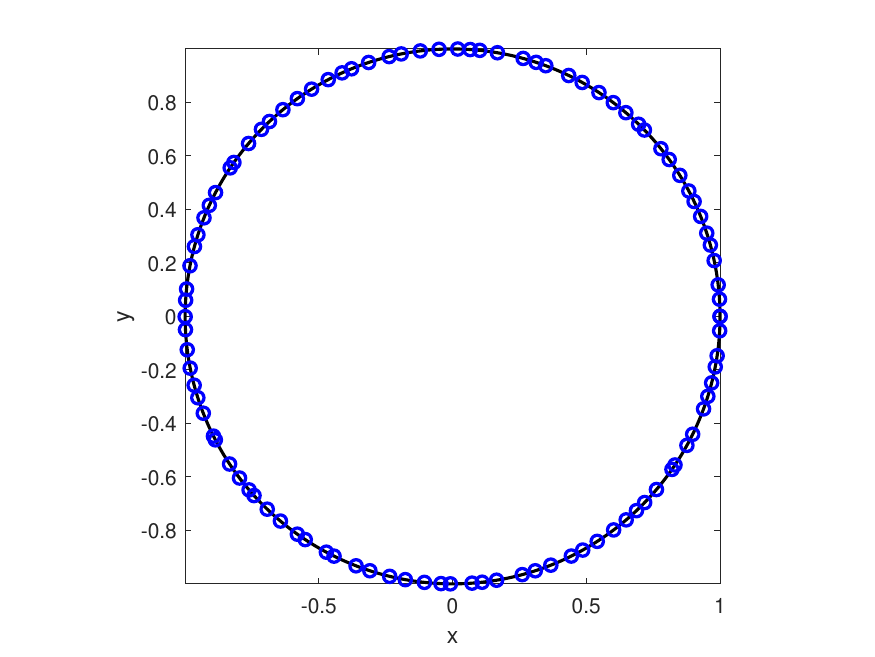}}
 \subfigure[Final mesh: $\mathbb{M}=\mathbb{I}$]{
 \includegraphics[width=0.31\textwidth, trim=20 0 20 20,clip]{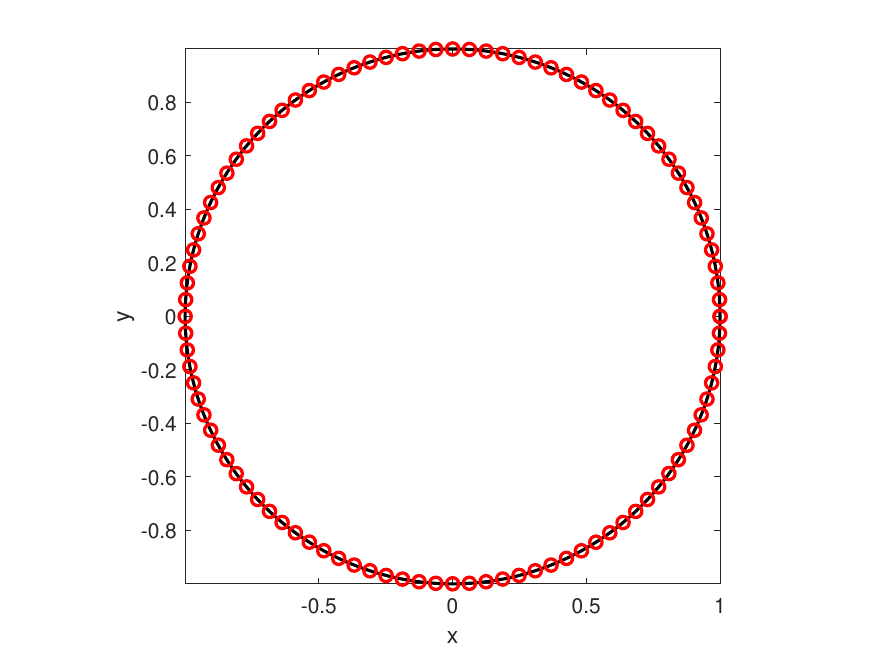}}
 \subfigure[Final mesh: $\mathbb{M} = \bar{k}\,\mathbb{I}$]{
 \includegraphics[width=0.31\textwidth, trim=20 0 20 20,clip]{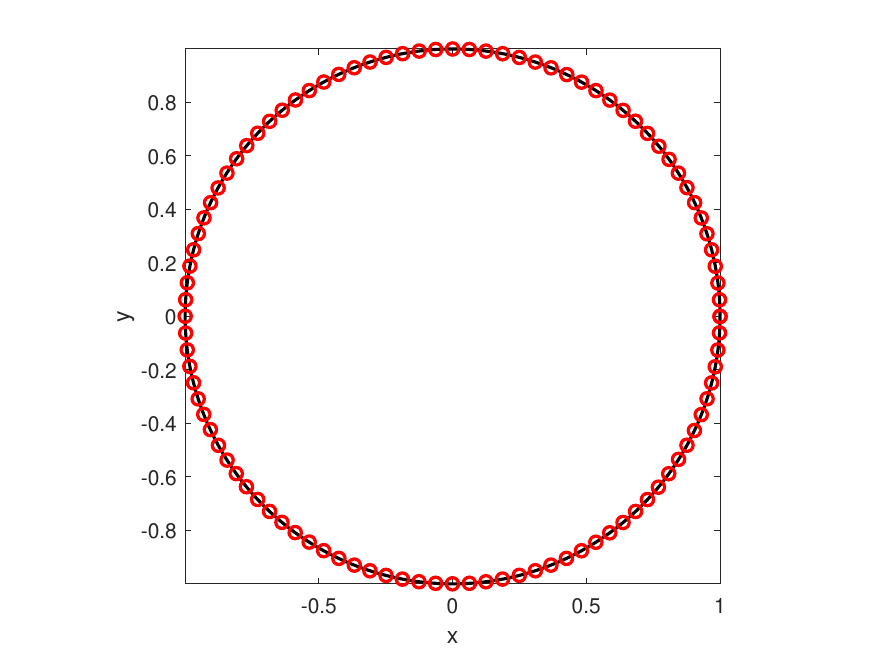}}
 \caption{{\bf Example} \ref{ex:ellipse}.
Meshes ($N=100$) of a unit circle are obtained by the unifying MMPDE method.
 }
 \label{Fig:Circle}
 \end{figure}

\begin{figure}[H]
 \centering
 \subfigure[Initial mesh]{
 \includegraphics[width=0.31\textwidth, trim=20 100 20 120,clip]{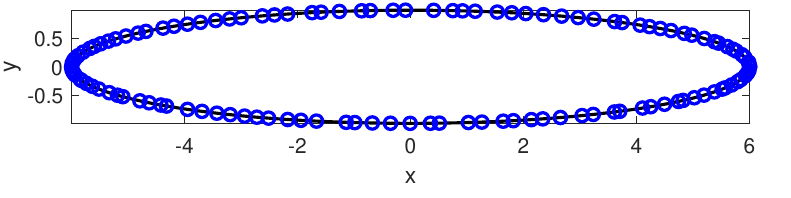}}
 \subfigure[Final mesh: $\mathbb{M}=\mathbb{I}$]{
 \includegraphics[width=0.31\textwidth, trim=20 100 20 120,clip]{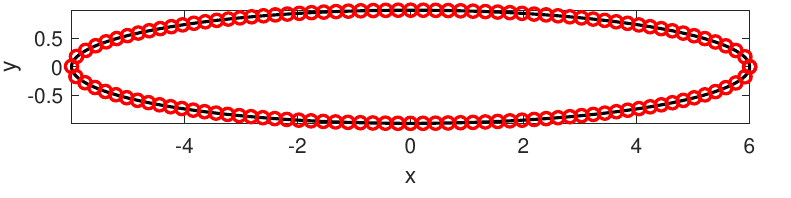}}
 \subfigure[Final mesh: $\mathbb{M} = \bar{k}\,\mathbb{I}$]{
 \includegraphics[width=0.31\textwidth, trim=20 100 20 120,clip]{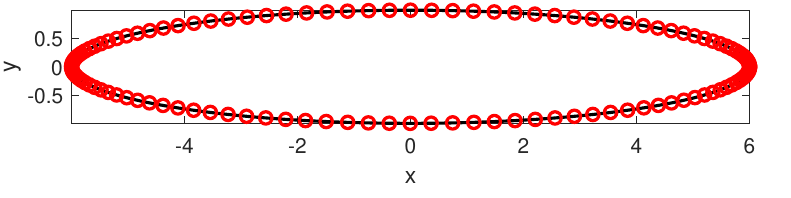}}
 \caption{{\bf Example} \ref{ex:ellipse}.
 Meshes ($N=120$) of an ellipse are obtained by the unifying MMPDE method.
 }
 \label{Fig:Ellipse}
 \end{figure}

%%%%%%%%%%%%%%%%%%%

\begin{example}\label{ex:lemniscate}
 (2D Lemniscate curve)
\end{example}
In this example, we consider the 2D lemniscate curve in $\mathbb{R}^2$ that has the parametric representation as
\begin{equation}
x(s) = \frac{2\cos(s)}{ 1+\sin^2(s)}, \quad
y(s) = \frac{\sin( s) \cos(s)}{ 1+\sin^2(s)},\quad s\in [0,2\pi].
\end{equation}
Randomly perturbed nonuniform initial mesh is shown in Fig.~\ref{Fig:lemniscate}(a)).

The final meshes for both the Euclidean metric $\mathbb{M} = \mathbb{I}$ and curvature-based metric $\mathbb{M} = \bar{k}\,\mathbb{I}$
are shown in Figs.~\ref{Fig:lemniscate}(b) and (c), respectively.
The final mesh with $\mathbb{M} = \mathbb{I}$ is evenly distributed along the curve
while the final mesh with $\mathbb{M} = \bar{k}\,\mathbb{I}$ has a higher concentration of vertices on the segments near
$(\pm 2, 0)$ where the curvature is larger than on the rest of the curve. This example also shows that the unifying moving mesh method
does not encounter any difficulty in moving the vertices along the curve even when the curve self-intersects.

 \begin{figure}[H]
 \centering
 \subfigure[Initial mesh]{
 \includegraphics[width=0.31\textwidth, trim=20 70 20 90,clip]{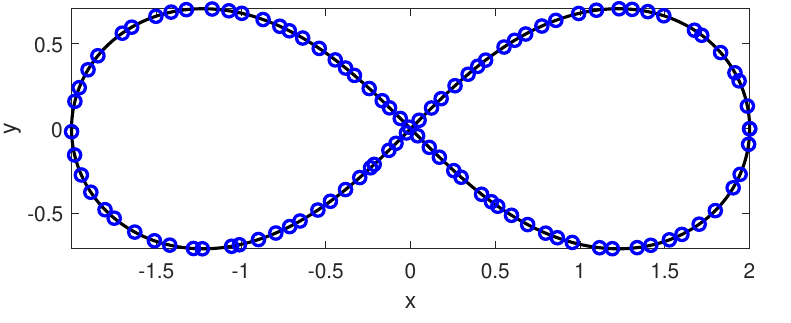}}
  \subfigure[Final mesh: $\mathbb{M}=\mathbb{I}$]{
 \includegraphics[width=0.31\textwidth, trim=20 70 20 90,clip]{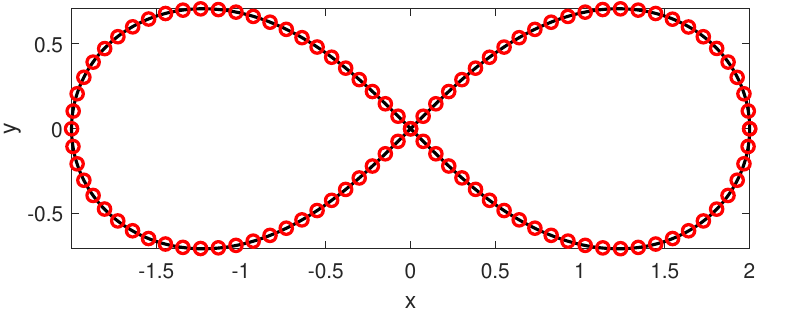}}
 \subfigure[Final mesh: $\mathbb{M} = \bar{k}\,\mathbb{I}$]{
 \includegraphics[width=0.31\textwidth, trim=20 70 20 90,clip]{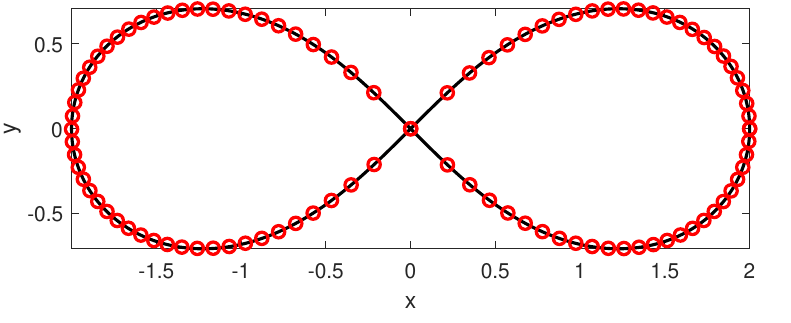}}
 \caption{{\bf Example} \ref{ex:lemniscate}.
 Meshes ($N=100$) of the lemniscate curve are obtained by the unifying  MMPDE method. }
 \label{Fig:lemniscate}
 \end{figure}

 %%%%%%%%%%%%%%%%5
\begin{example}\label{ex:cardioid}
 (2D Cardioid curve)
\end{example}
In this example, we consider the 2D cardioid curve having the parametric representation as
\begin{equation}
x(s) = 2\cos(s)(1-\cos(s)), \quad
y(s) =  2\sin(s)(1-\cos(s)),\quad s\in [0,2\pi].
\end{equation}
A randomly perturbed nonuniform initial mesh ($N=70$) is shown in Fig.~\ref{Fig:cardioid}.
In the figure, final meshes obtained with the Euclidean metric $\mathbb{M} = \mathbb{I}$ and curvature-based metric $\mathbb{M} = \bar{k}\,\mathbb{I}$ are also shown.
We can see that the final mesh with $\mathbb{M} = \mathbb{I}$ is evenly distributed along the curve
while the mesh with the curvature-based metric $\mathbb{M} = \bar{k}\,\mathbb{I}$ has higher concentration near
(0,0) where the curvature is larger than the rest of the curve.

 \begin{figure}[H]
 \centering
 \subfigure[Initial mesh]{
 \includegraphics[width=0.31\textwidth, trim=30 0 50 20,clip]{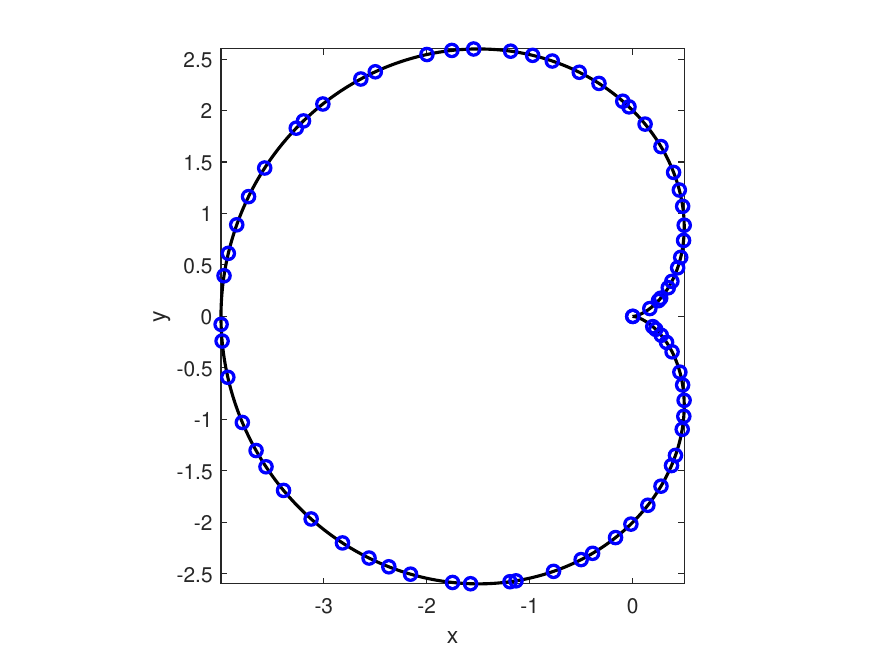}}
  \subfigure[Final mesh: $\mathbb{M}=\mathbb{I}$]{
 \includegraphics[width=0.31\textwidth, trim=30 0 50 20,clip]{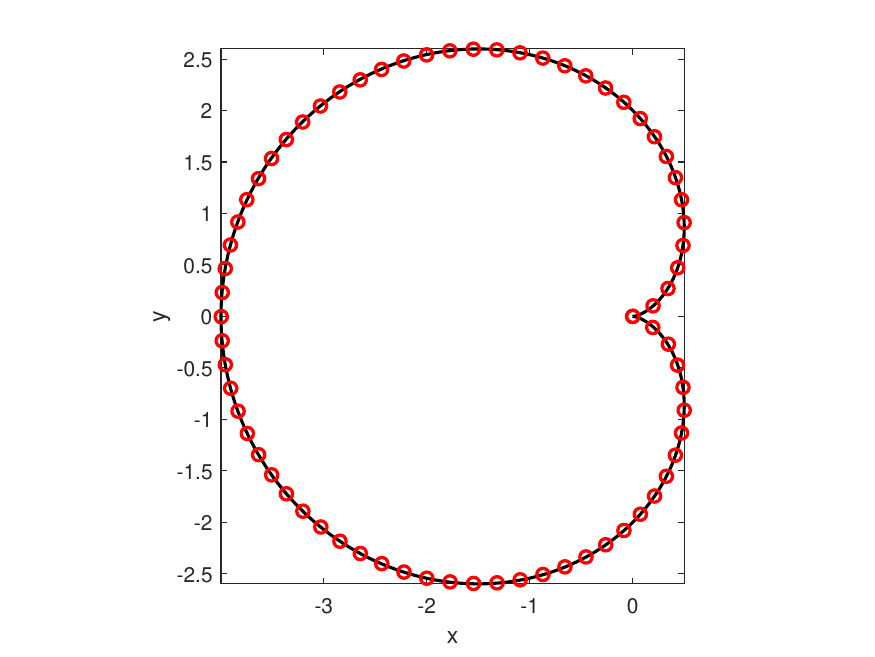}}
 \subfigure[Final mesh: $\mathbb{M} = \bar{k}\,\mathbb{I}$]{
 \includegraphics[width=0.31\textwidth, trim=30 0 50 20,clip]{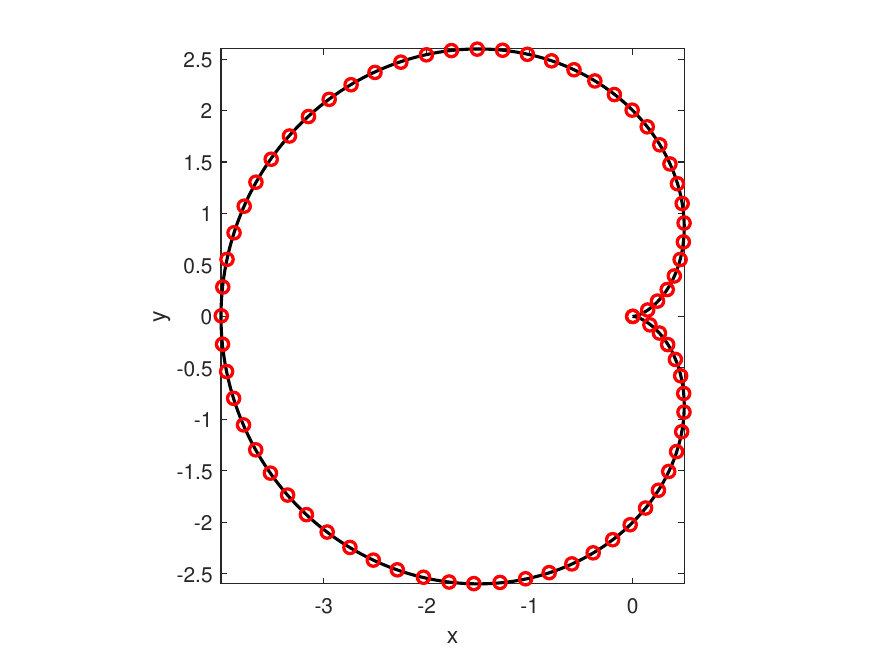}}
 \caption{{\bf Example} \ref{ex:cardioid}.
 Meshes ($N=70$) of the cardioid curve are obtained by the unifying MMPDE method.
 }
 \label{Fig:cardioid}
 \end{figure}

\begin{example}\label{ex:rose}
(2D Rose curves cluster)
\end{example}
As the final example for 2D curves, we consider a set of 2D rose curves.
They have the parametric representation as
\begin{equation}\label{rose-m1}
x(s) = \cos(r s) \cos(s), \quad
y(s) = \cos(r s) \sin(s),\quad s\in [0,c\pi],
\end{equation}
where $c$ is a constant.
We consider the following four settings.
\begin{itemize}
    \item[]\textbf{Case 1}: $r = \frac{1}{6}$ and $ c = 3$. The curve is a spiral. The initial mesh ($N=100$)
    is given in Fig.~\ref{Fig:rose6}(a).
    \item[]\textbf{Case 2}: $r = \frac{2}{3}$ and $ c = 6$. The curve looks like four intersecting cardioids.
    The initial mesh ($N=200$) is given in Fig.~\ref{Fig:rose3}(a).
    \item[]\textbf{Case 3}: $r = \frac{1}{4}$ and $c = 8$. The curve consists of two limacons inside a unit circle.
    The initial mesh ($N=200$) is given in Fig.~\ref{Fig:rose2}(a).
    \item[]\textbf{Case 4}: $r = 4$ and $c = 2$. The curve is an eight-petal rose.
    The initial mesh ($N=240$) is given in Fig.~\ref{Fig:rose1}(a).
\end{itemize}

The final meshes for both the Euclidean metric $\mathbb{M} = \mathbb{I}$ and curvature-based metric $\mathbb{M} = \bar{k}\,\mathbb{I}$
are shown in Figs.~\ref{Fig:rose6}, ~\ref{Fig:rose3}, ~\ref{Fig:rose2}, and ~\ref{Fig:rose1}, respectively.
We can see that the unifying moving mesh method works well for these complex (and self-intersecting) curves.
As for previous examples, the final meshes with $\mathbb{M}_K = \mathbb{I}$ are evenly distributed along the curves
while those meshes with the curvature-based metric $\mathbb{M} = \bar{k}\,\mathbb{I}$ have higher vertex concentration
in regions with larger curvature. These are consistent with the expectation of those metric tensors.

 \begin{figure}[H]
 \centering
 \subfigure[Initial mesh]{
 \includegraphics[width=0.31\textwidth, trim=20 0 20 20,clip]{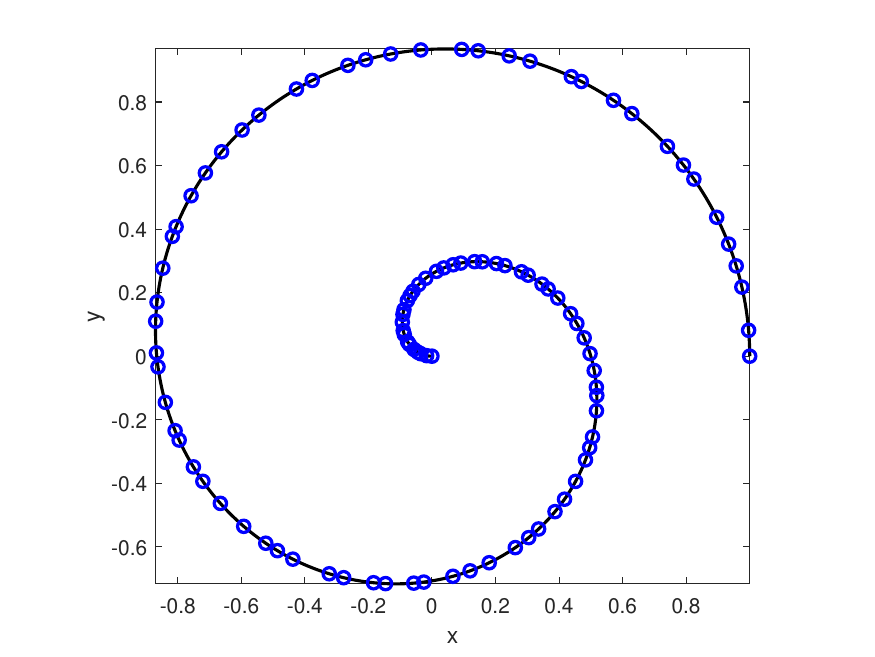}}
 \subfigure[Final mesh: $\mathbb{M}=\mathbb{I}$]{
 \includegraphics[width=0.31\textwidth, trim=20 0 20 20,clip]{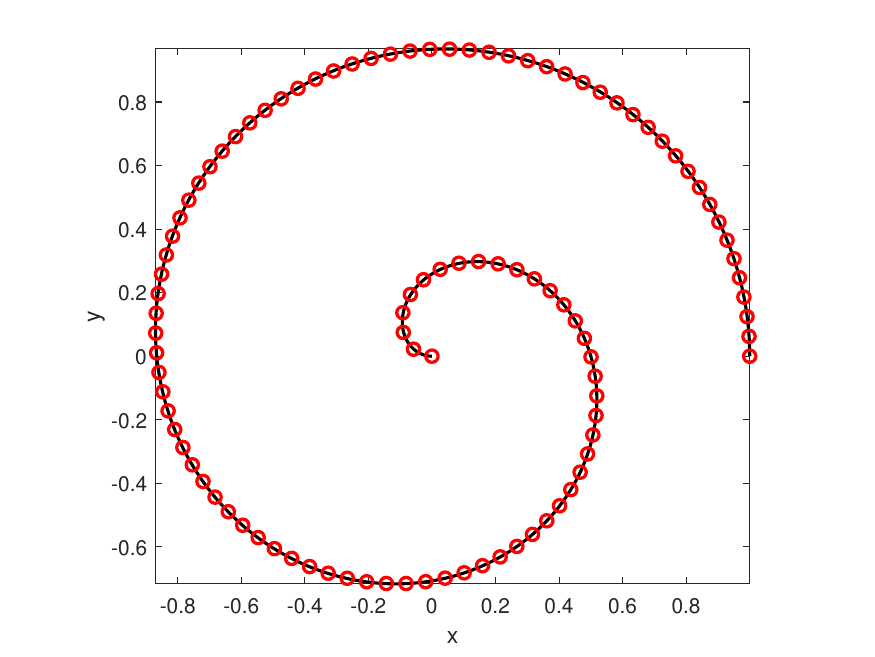}}
 \subfigure[Final mesh: $\mathbb{M} = \bar{k}\,\mathbb{I}$]{
 \includegraphics[width=0.31\textwidth, trim=20 0 20 20,clip]{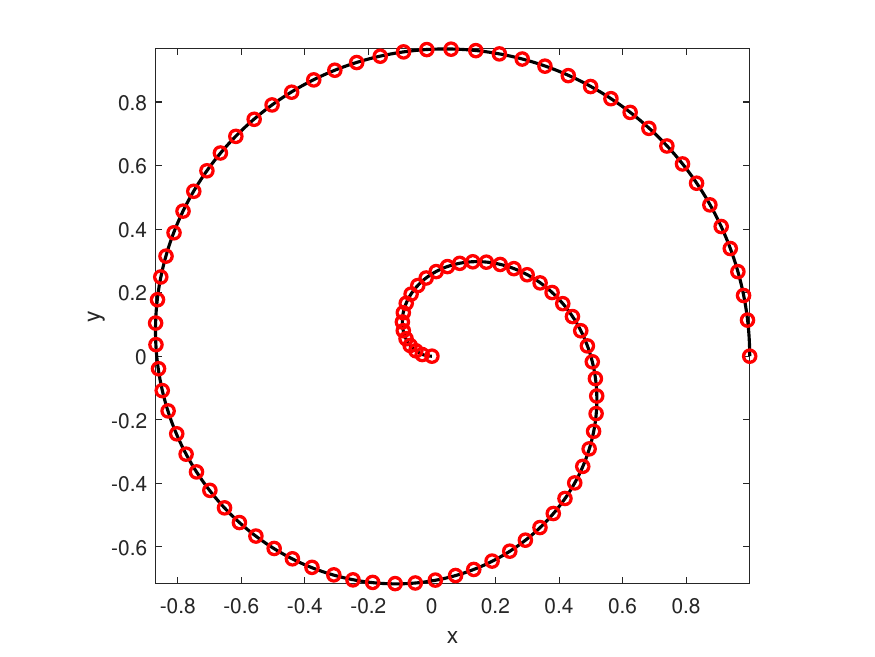}}
 \caption{{\bf Example} \ref{ex:rose}.
 Meshes ($N=100$) of the rose curve with $r = \frac{1}{6}$ and $c = 3$ are obtained by the unifying MMPDE method.  }
 \label{Fig:rose6}
 \end{figure}

\begin{figure}[H]
 \centering
 \subfigure[Initial mesh]{
 \includegraphics[width=0.31\textwidth, trim=20 0 20 20,clip]{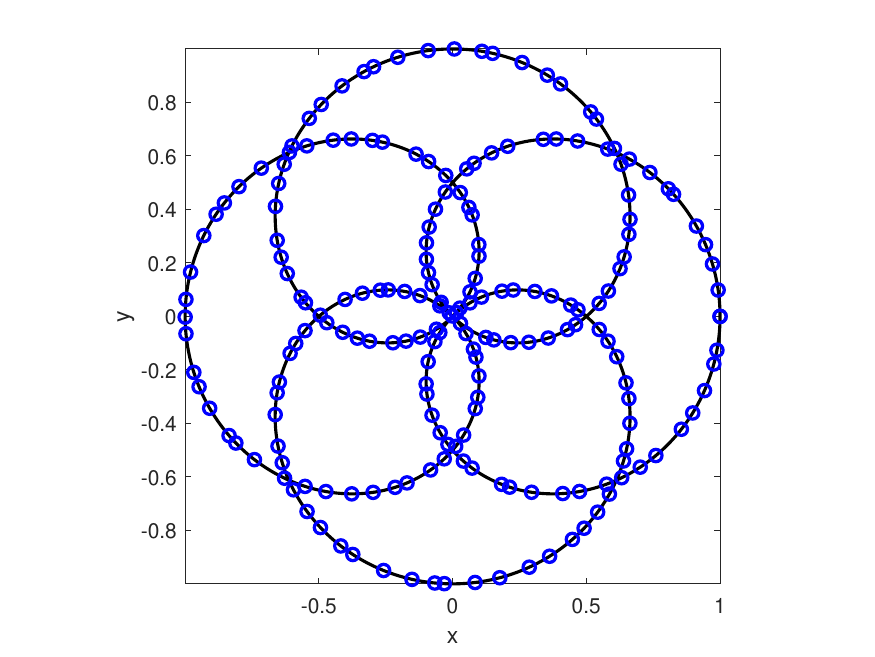}}
 \subfigure[Final mesh: $\mathbb{M}=\mathbb{I}$]{
 \includegraphics[width=0.31\textwidth, trim=20 0 20 20,clip]{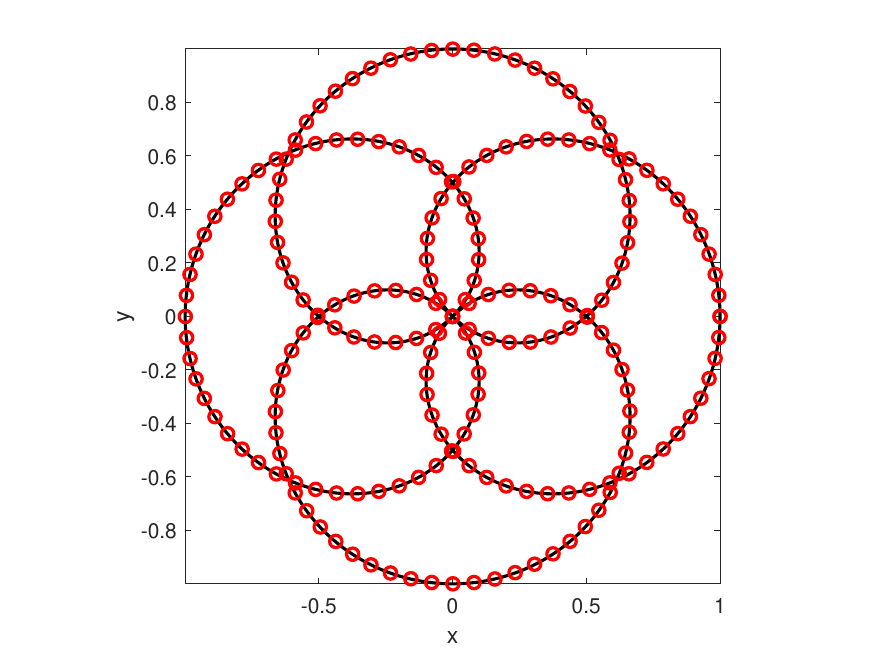}}
 \subfigure[Final mesh: $\mathbb{M} = \bar{k}\,\mathbb{I}$]{
 \includegraphics[width=0.31\textwidth, trim=20 0 20 20,clip]{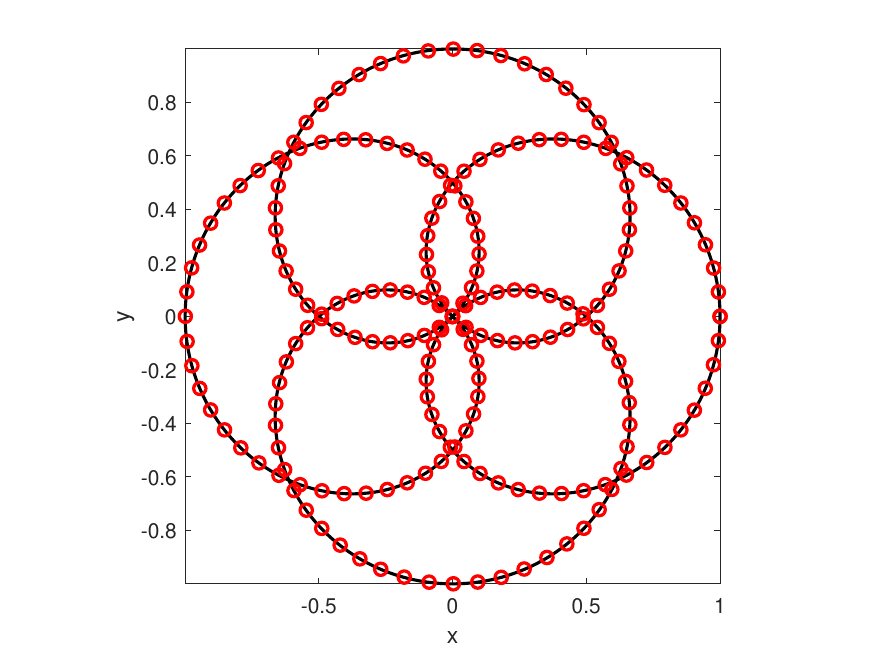}}
 \caption{{\bf Example} \ref{ex:rose}.
 Meshes ($N=200$) of the rose curve with $r = \frac{2}{3}$ and $c = 6$ are obtained by the unifying MMPDE method. }
 \label{Fig:rose3}
 \end{figure}

\begin{figure}[H]
 \centering
 \subfigure[Initial mesh]{
 \includegraphics[width=0.31\textwidth, trim=20 0 20 20,clip]{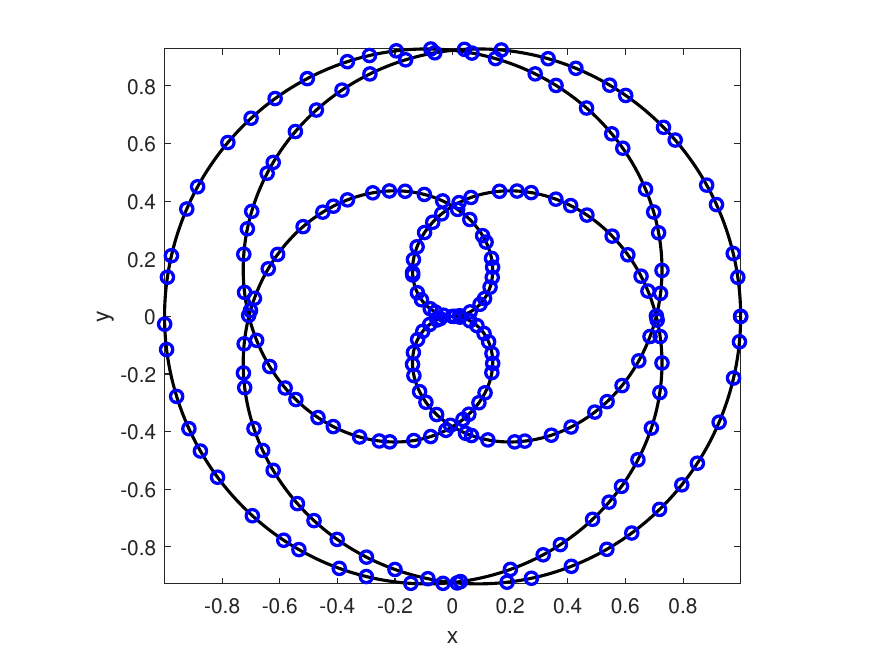}}
 \subfigure[Final mesh: $\mathbb{M}=\mathbb{I}$]{
 \includegraphics[width=0.31\textwidth, trim=20 0 20 20,clip]{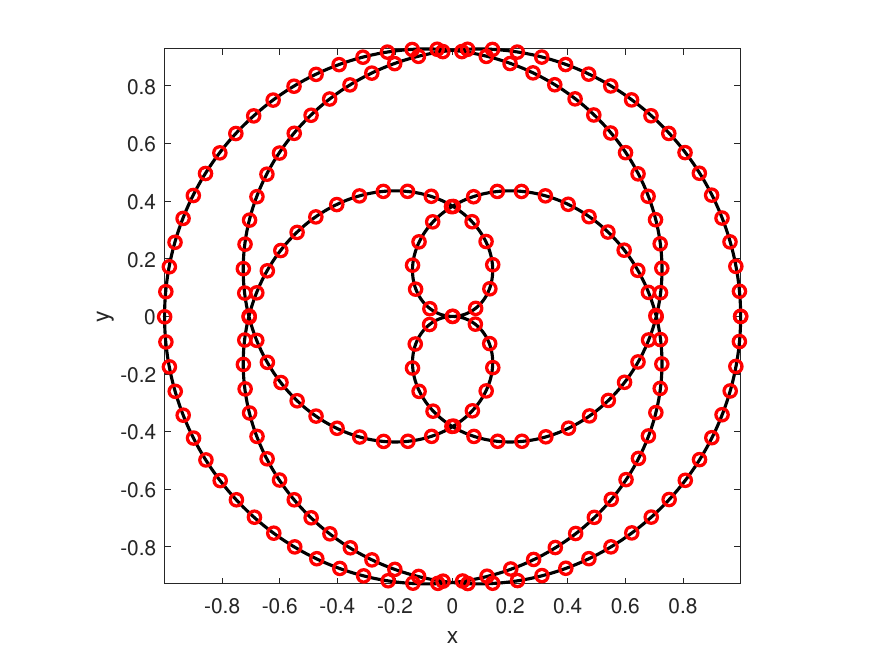}}
  \subfigure[Final mesh: $\mathbb{M} = \bar{k}\,\mathbb{I}$]{
 \includegraphics[width=0.31\textwidth, trim=20 0 20 20,clip]{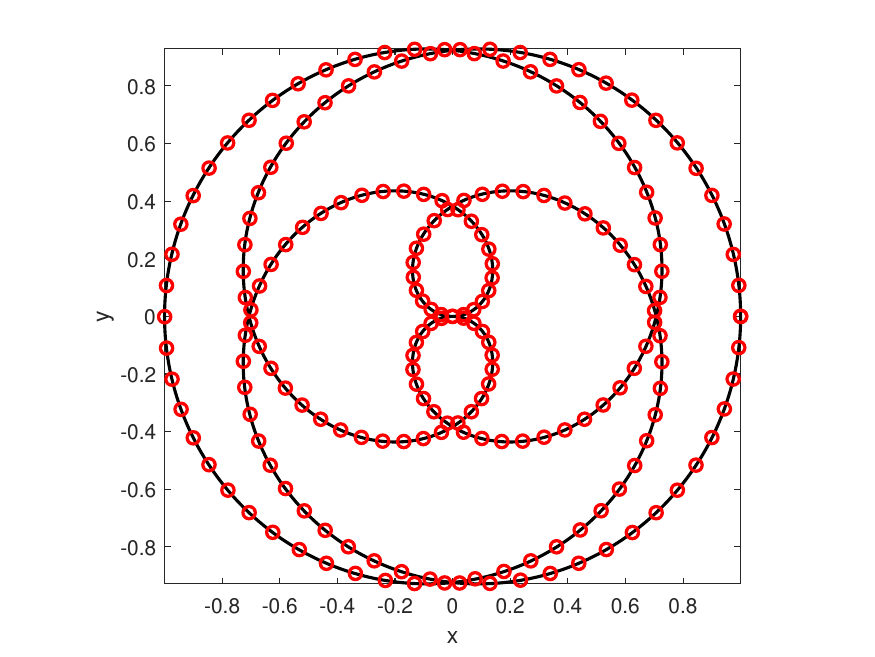}}
 \caption{{\bf Example} \ref{ex:rose}.
 Meshes ($N=200$) of the rose curve with $r = \frac{1}{4}$ and $ c = 8$ are obtained by the unifying MMPDE method. }
 \label{Fig:rose2}
 \end{figure}

\begin{figure}[H]
 \centering
 \subfigure[Initial mesh]{
 \includegraphics[width=0.31\textwidth, trim=20 0 20 20,clip]{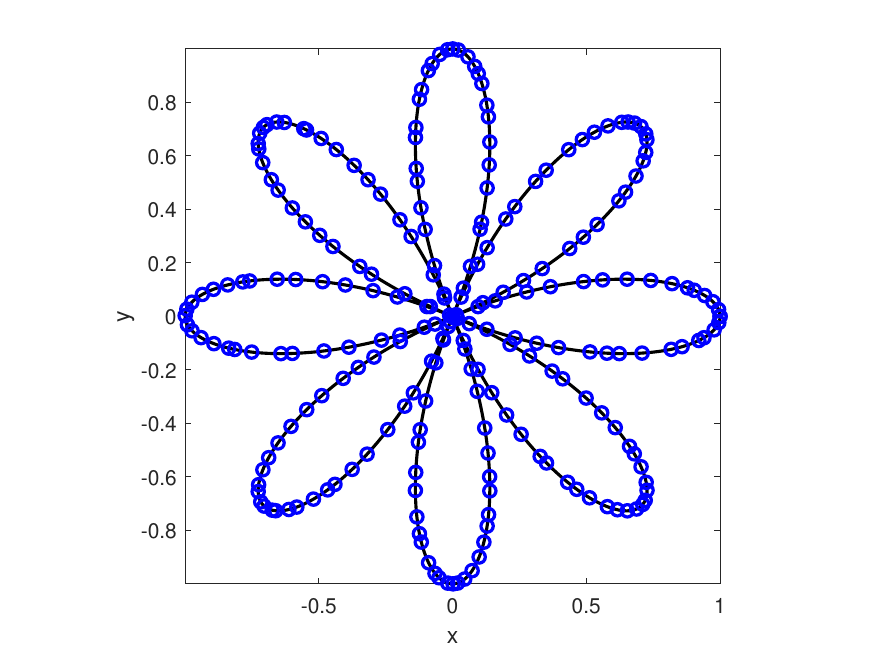}}
 \subfigure[Final mesh: $\mathbb{M}=\mathbb{I}$]{
 \includegraphics[width=0.31\textwidth, trim=20 0 20 20,clip]{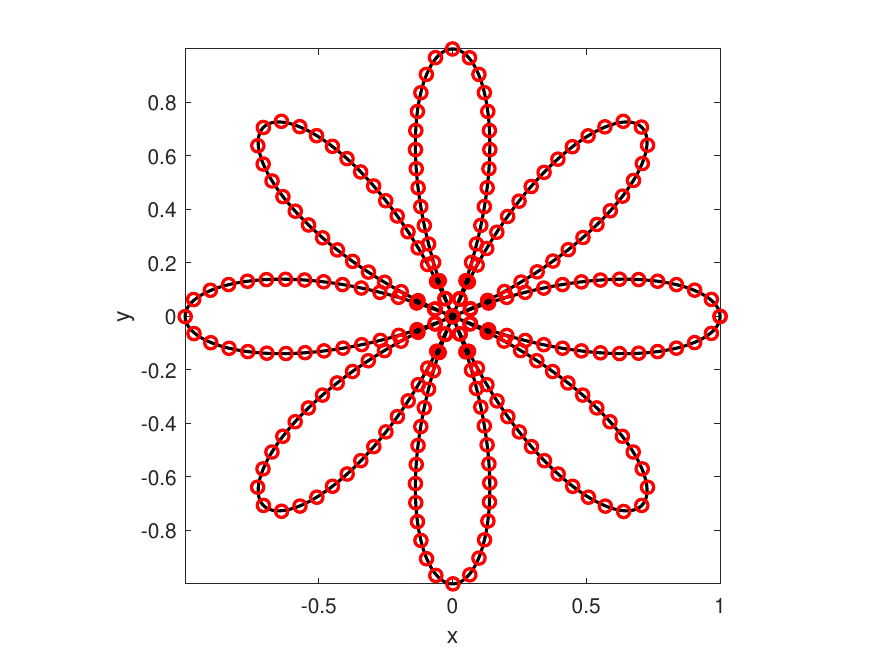}}
 \subfigure[Final mesh: $\mathbb{M} = \bar{k}\,\mathbb{I}$]{
 \includegraphics[width=0.31\textwidth, trim=20 0 20 20,clip]{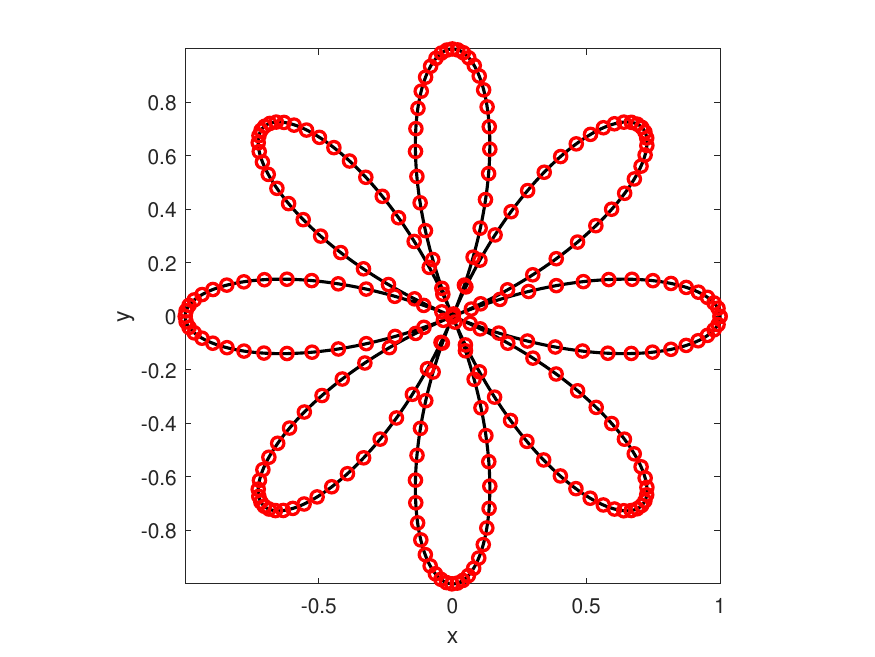}}
 \caption{{\bf Example} \ref{ex:rose}.
 Meshes ($N=240$) of the rose curve with $r = 4$ and $c = 2$ (an eight-petal rose) are obtained by the unifying MMPDE method.
 }
 \label{Fig:rose1}
 \end{figure}

%%%%%%%%%%%%
 \begin{example}\label{ex:MexicanCap}
 (3D  Mexican cap curve)
\end{example}
In this example, we consider a 3D Mexican cap curve that has the parametric representation as
\begin{equation}
x(s) = e^{0.1s}\cos(10s), \quad
 y(s) = e^{0.1s}\sin(10s),\quad z(s) = s, \quad s\in [-12,12].
 \end{equation}
A randomly perturbed nonuniform initial mesh with $N=300$ is given in Fig.~\ref{Fig:MexicanCap}.
The final meshes for both the Euclidean metric $\mathbb{M} = \mathbb{I}$ and curvature-based metric $\mathbb{M} = \bar{k}\,\mathbb{I}$
are also shown in Fig.~\ref{Fig:MexicanCap}.
We can see that the unifying moving mesh method works well too for this 3D curve and the final meshes are evenly distributed
along the curve. Notice that the meshes for the Euclidean and curvature-based metric tensors
are almost identical since the curvature of the curve is constant.

 \begin{figure}[H]
 \centering
 \subfigure[Initial mesh]{
 \includegraphics[width=0.31\textwidth, trim=120 0 120 20,clip]{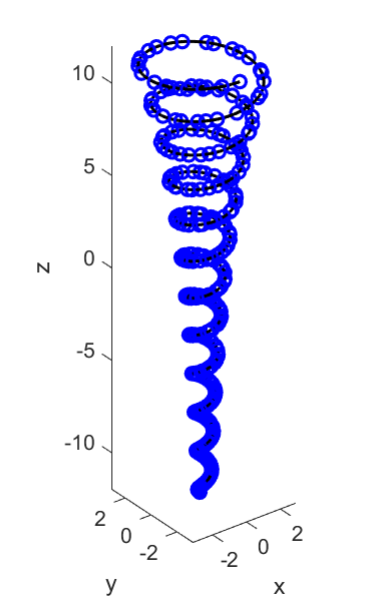}}
 \subfigure[Final mesh: $\mathbb{M} = \mathbb{I}$]{
 \includegraphics[width=0.31\textwidth, trim=120 0 120 20,clip]{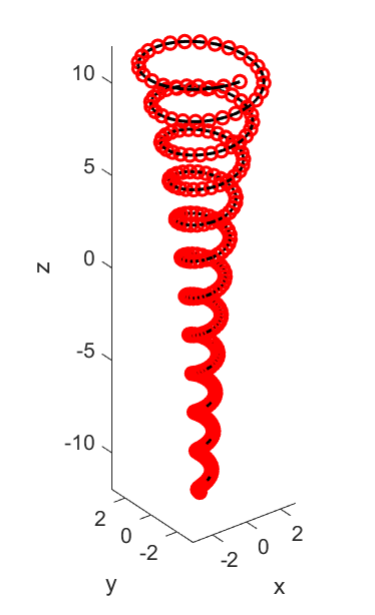}}
 \subfigure[Final mesh: $\mathbb{M} = \bar{k}\,\mathbb{I}$]{
 \includegraphics[width=0.31\textwidth, trim=120 0 120 20,clip]{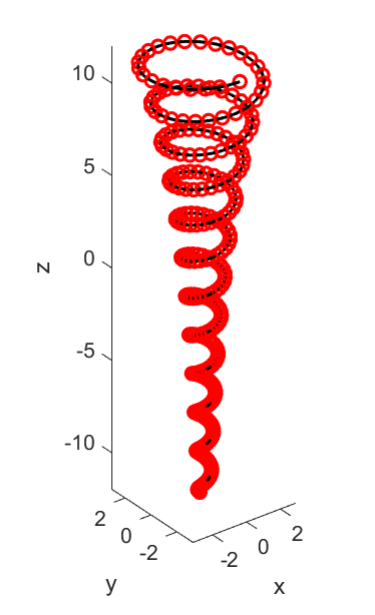}}
  \subfigure[top view of (a) ]{
 \includegraphics[width=0.31\textwidth, trim=20 0 20 20,clip]{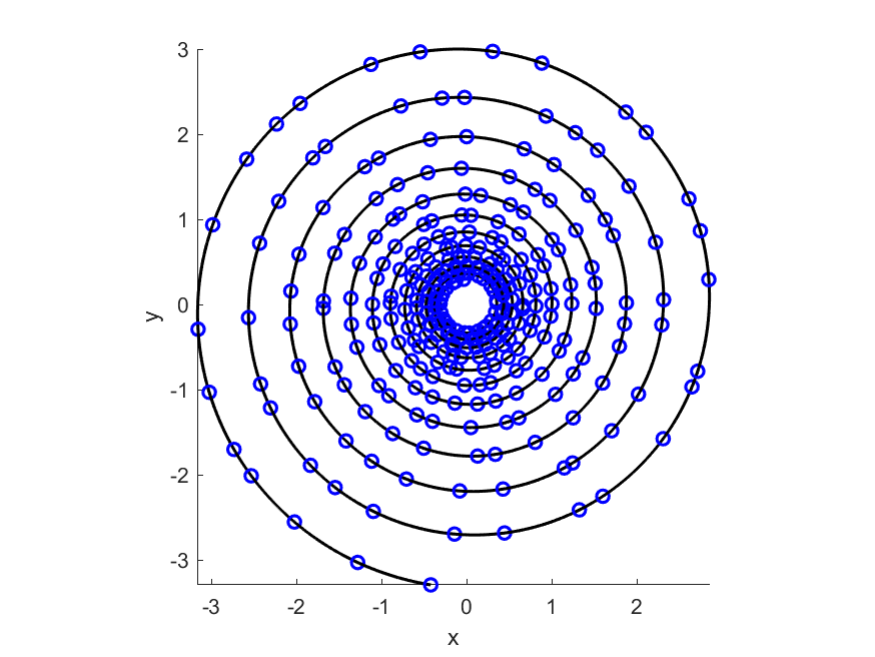}}
 \subfigure[top view of (b)]{
 \includegraphics[width=0.31\textwidth, trim=20 0 20 20,clip]{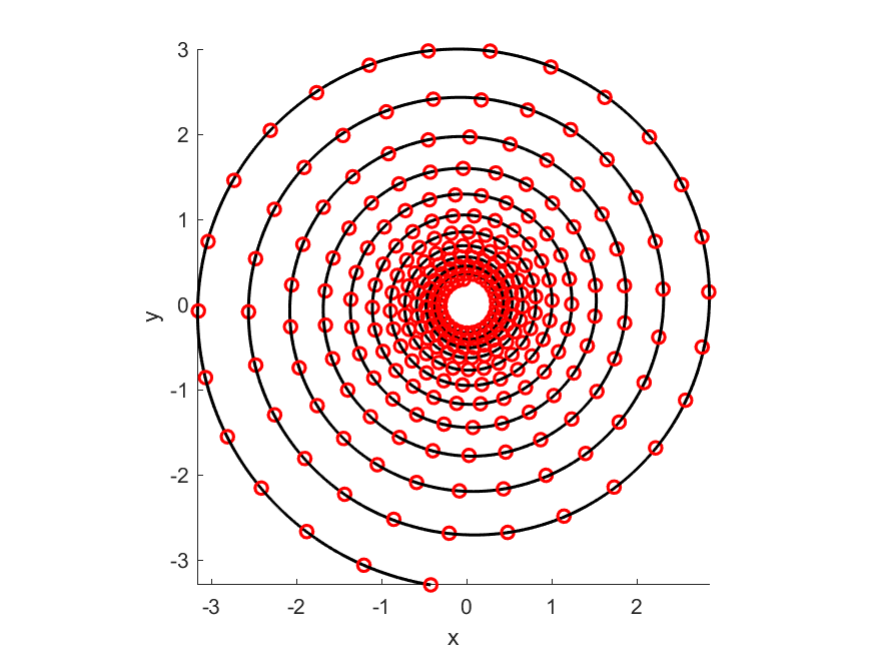}}
 \subfigure[top view of (c)]{
 \includegraphics[width=0.31\textwidth, trim=20 0 20 20,clip]{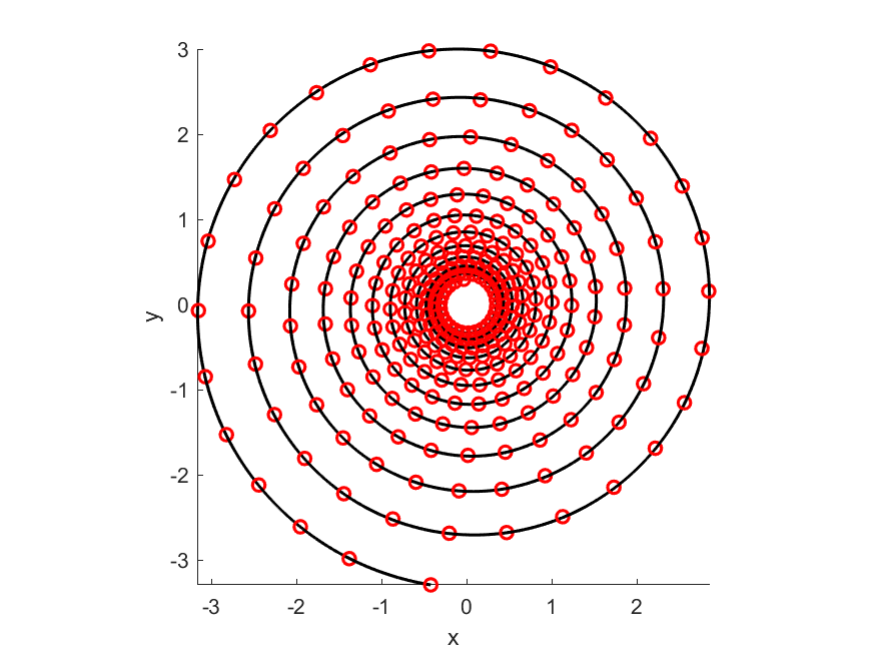}}
 \caption{{\bf Example} \ref{ex:MexicanCap}.
 Meshes ($N=300$) of the 3D Mexican cap curve are obtained by the unifying MMPDE method.
  }
 \label{Fig:MexicanCap}
 \end{figure}

  \begin{example}\label{ex:TorusCurve}
  (3D Torus curve)
 \end{example}
 In this example, we consider a 3D Torus curve that can be expressed parametrically by
 \begin{equation}
 \begin{cases}
 x(s) = (3+\cos(\sqrt{2}s))\cos(s), \\
 y(s) = \cos(\sqrt{2}s),\\
 z(s) = (3+\cos(\sqrt{2}s))\sin(s),
 \end{cases}
 \quad s\in [0,40\pi] .
 \end{equation}
 A randomly perturbed nonuniform initial mesh ($N=180$) is given in Fig.~\ref{Fig:TorusCurve}(a).
 The final meshes for Euclidean metric $\mathbb{M} = \mathbb{I}$ and curvature-based metric $\mathbb{M} = \bar{k}\,\mathbb{I}$
 are shown in Fig.~\ref{Fig:TorusCurve}(b) and (c), respectively. The final meshes are evenly distributed along the curves. (They look like
 to have higher mesh concentration at corners in the top and front view because of the limitation of viewing angle.)
 Moreover, the final meshes are almost identical because the curvature is constant along the curve.
 This example shows that the unifying moving mesh method works well for complex 3D curves.

  \begin{figure}[H]
  \centering
  \subfigure[Initial mesh]{
  \includegraphics[width=0.31\textwidth, trim=80 0 90 30,clip]{TorusCurve_Nx180_initial-eps-converted-to.pdf}}
  \subfigure[Final mesh: $\mathbb{M}_K = \mathbb{I}$]{
  \includegraphics[width=0.31\textwidth, trim=80 0 90 30,clip]{TorusCurve_Nx180_M1-eps-converted-to.pdf}}
  \subfigure[Final mesh: $\mathbb{M} = \bar{k}\,\mathbb{I}$]{
  \includegraphics[width=0.31\textwidth, trim=80 0 90 30,clip]{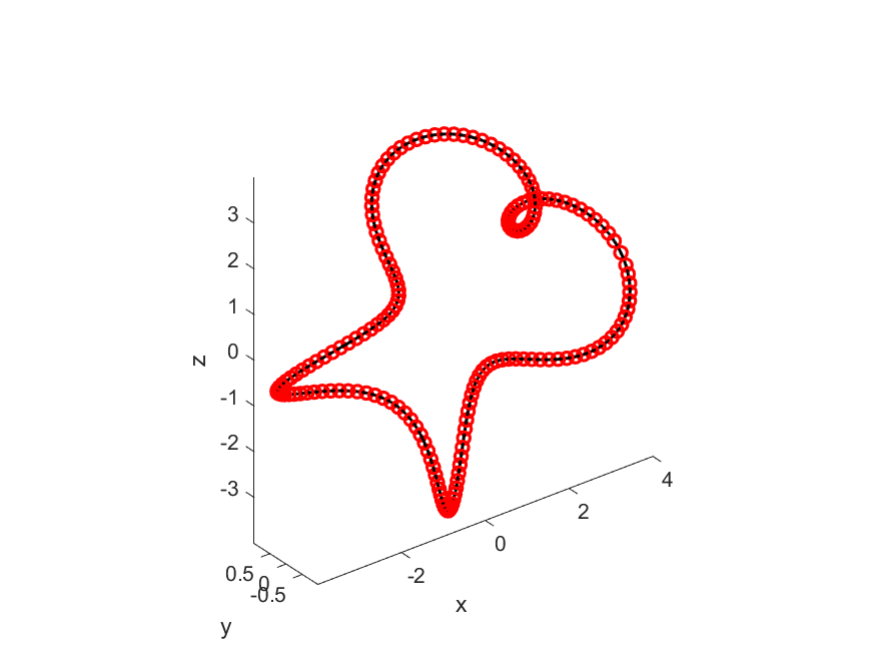}}
  \subfigure[front view of (a)]{
  \includegraphics[width=0.31\textwidth, trim=50 0 50 20,clip]{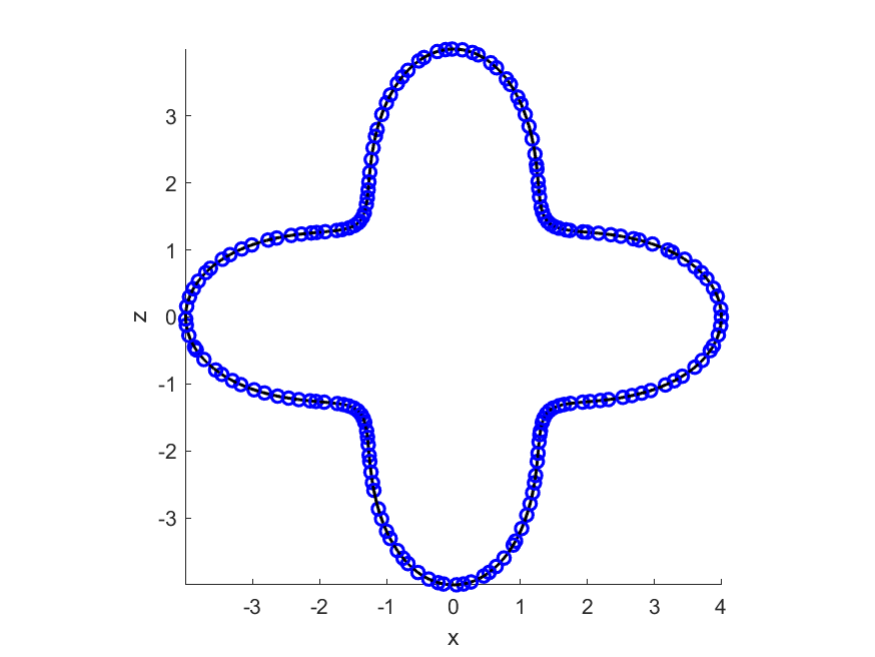}}
 \subfigure[front view of (b)]{
 \includegraphics[width=0.31\textwidth, trim=50 0 50 20,clip]{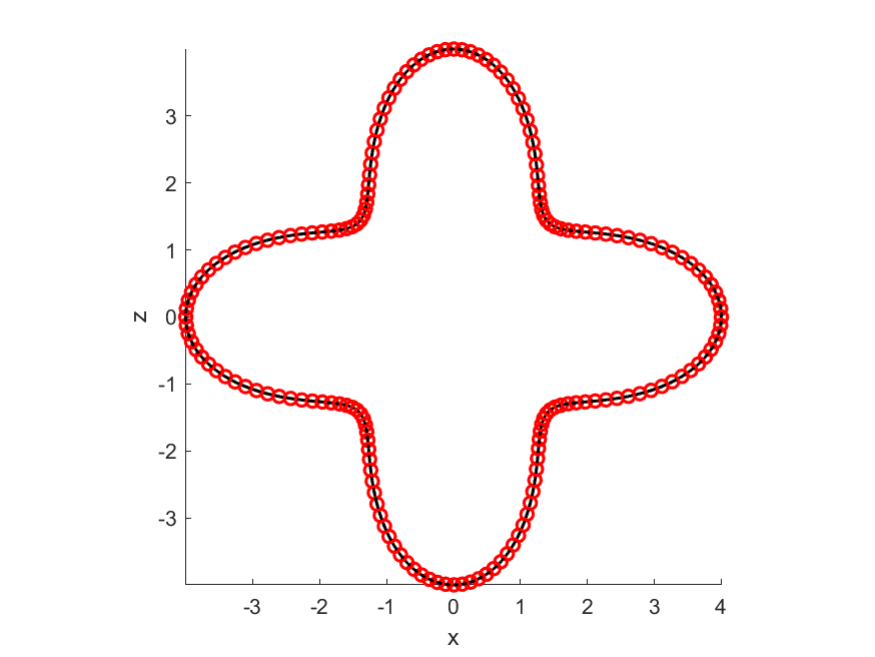}}
  \subfigure[front view of (c)]{
  \includegraphics[width=0.31\textwidth, trim=50 0 50 20,clip]{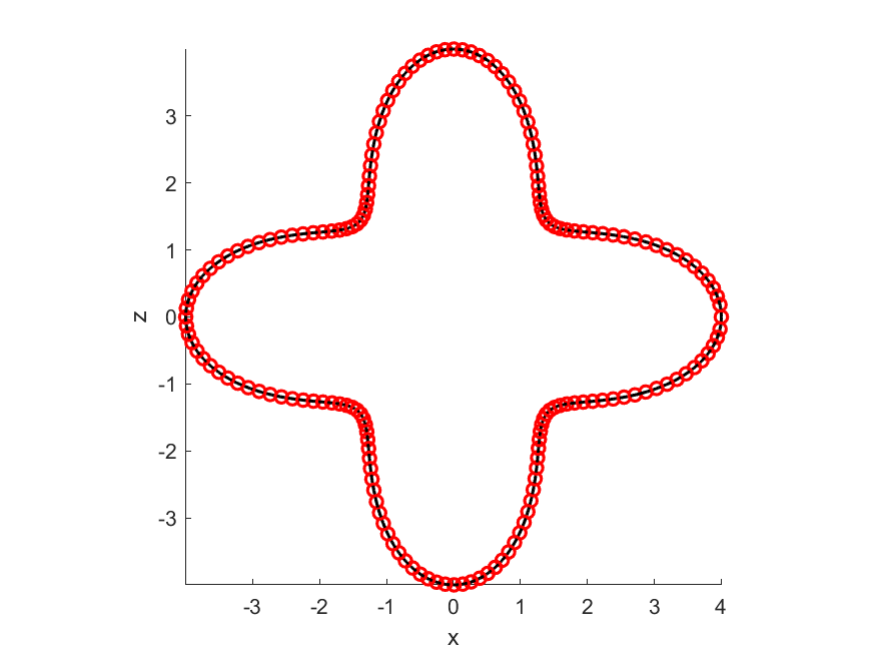}}
  \subfigure[top view of (a)]{
  \includegraphics[width=0.31\textwidth, trim=20 80 20 70,clip]{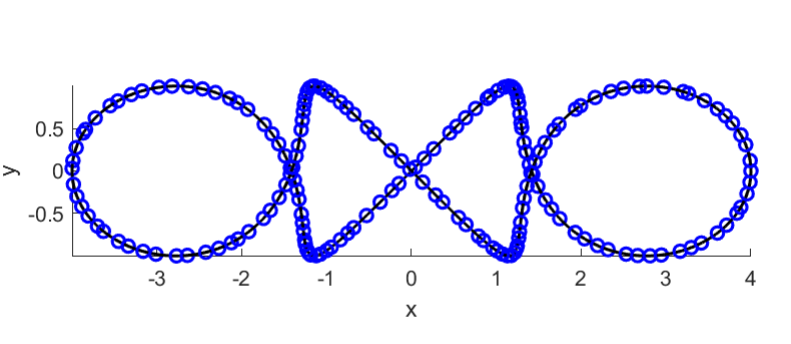}}
 \subfigure[top view of (b)]{
 \includegraphics[width=0.31\textwidth, trim=20 80 20 70,clip]{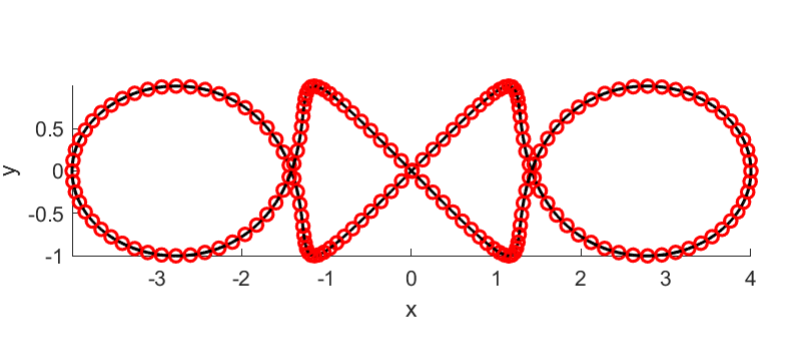}}
  \subfigure[top view of (c)]{
  \includegraphics[width=0.31\textwidth, trim=20 80 20 70,clip]{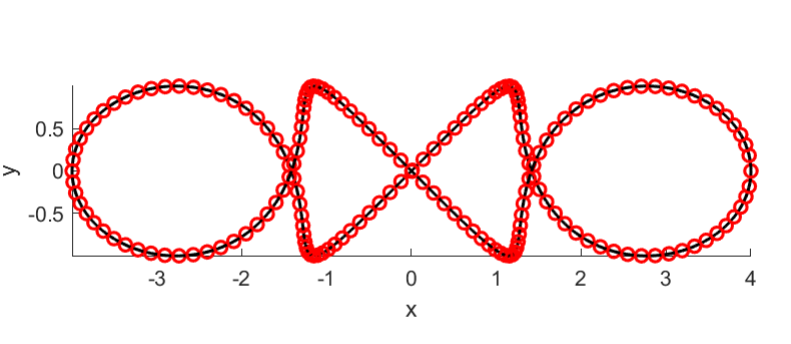}}
  \caption{{\bf Example} \ref{ex:TorusCurve}.
  Meshes ($N=180$) of the 3D torus curve are obtained by the unifying MMPDE method.
   }
  \label{Fig:TorusCurve}
  \end{figure}

%%%

 \begin{example}\label{ex:Hyperbaloid}
 (3D Hyperboloid surface)
\end{example}
In this example, we consider a 3D hyperboloid surface having the parametric representation as
\begin{equation}
\begin{cases}
x(s,\zeta) = \sqrt{(1+\zeta^2)}\cos(s), \\
y(s,\zeta) = \sqrt{(1+\zeta^2)}\sin(s),\\
z(s,\zeta) = \zeta,
\end{cases}
\quad s\in [0,2\pi],~\zeta\in[-2,2] .
\end{equation}
A randomly perturbed nonuniform initial mesh of $N=3872$ is given in Fig.~\ref{Fig:Hyperbaloid}(a).

The final meshes with the Euclidean metric $\mathbb{M} = \mathbb{I}$ and curvature-based metric $\mathbb{M} = \bar{k}\,\mathbb{I}$
are shown in Figs.~\ref{Fig:Hyperbaloid}(b) and (c), respectively.
We can see that the unifying moving mesh method works well for surface mesh movement and
provides an effective control of mesh concentration through the metric tensor.
More specifically, the final mesh with the Euclidean metric $\mathbb{M} = \mathbb{I}$ is evenly distributed while
the mesh with the curvature-based metric tensor is visibly coarser at the middle, top, and bottom sections
where the curvature is smaller.

 \begin{figure}[H]
 \centering
 \subfigure[Initial mesh]{
 \includegraphics[width=0.31\textwidth, trim=60 0 60 20,clip]{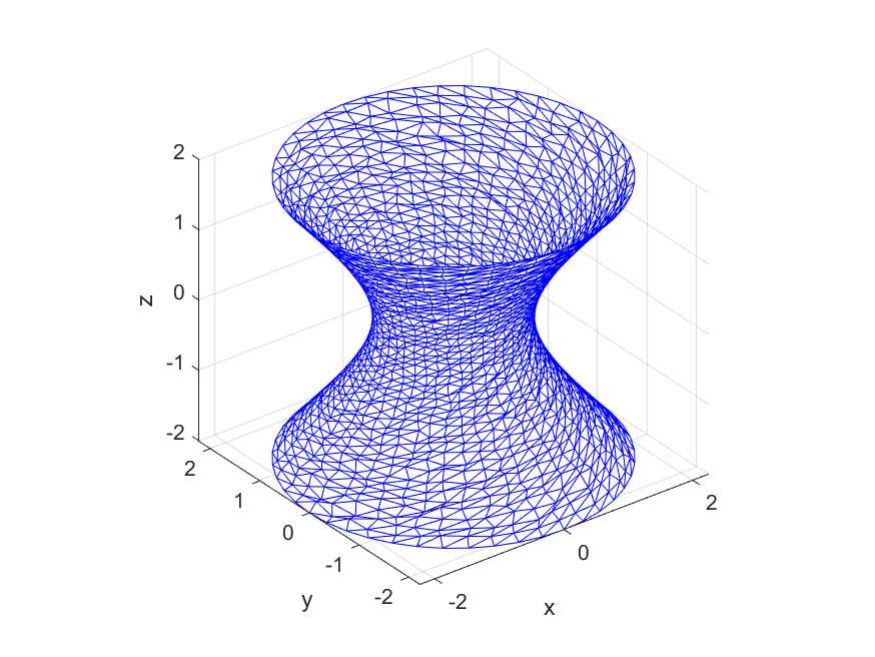}}
 \subfigure[Final mesh: $\mathbb{M}_K = \mathbb{I}$]{
 \includegraphics[width=0.31\textwidth, trim=60 0 60 20,clip]{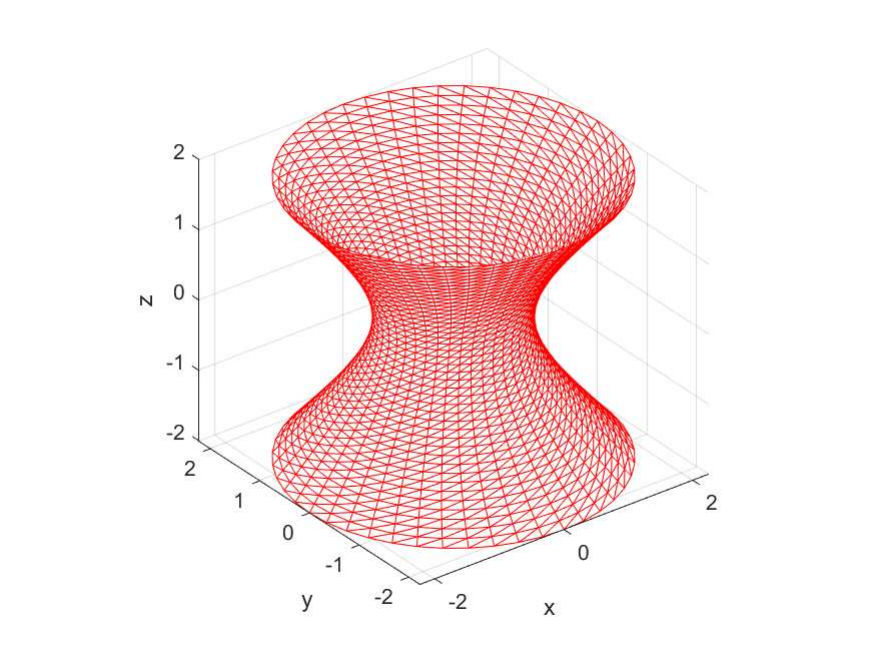}}
 \subfigure[Final mesh: $\mathbb{M} = \bar{k}\,\mathbb{I}$]{
 \includegraphics[width=0.31\textwidth, trim=60 0 60 20,clip]{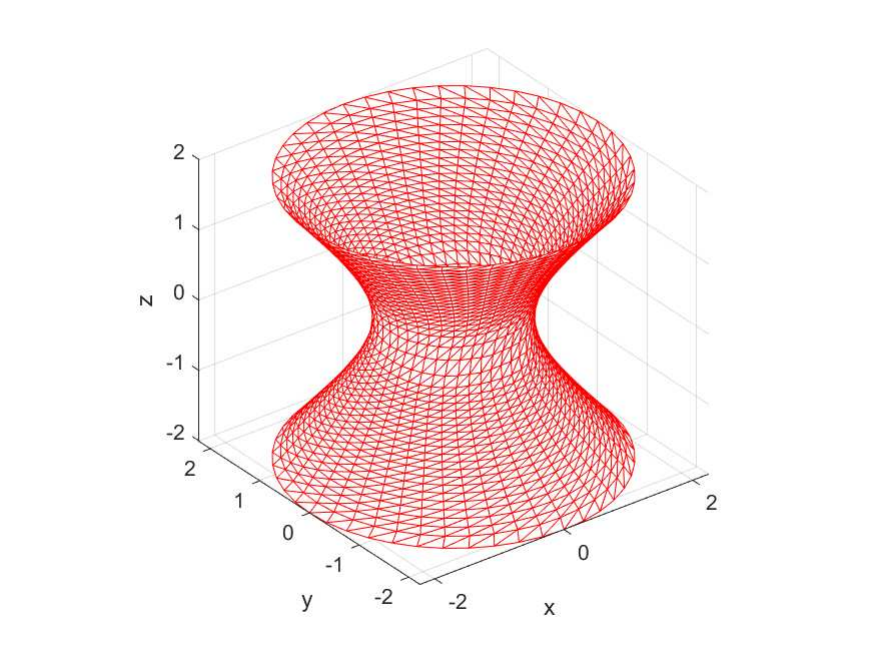}}
  \subfigure[front view of (a)]{
 \includegraphics[width=0.31\textwidth, trim=40 0 40 20,clip]{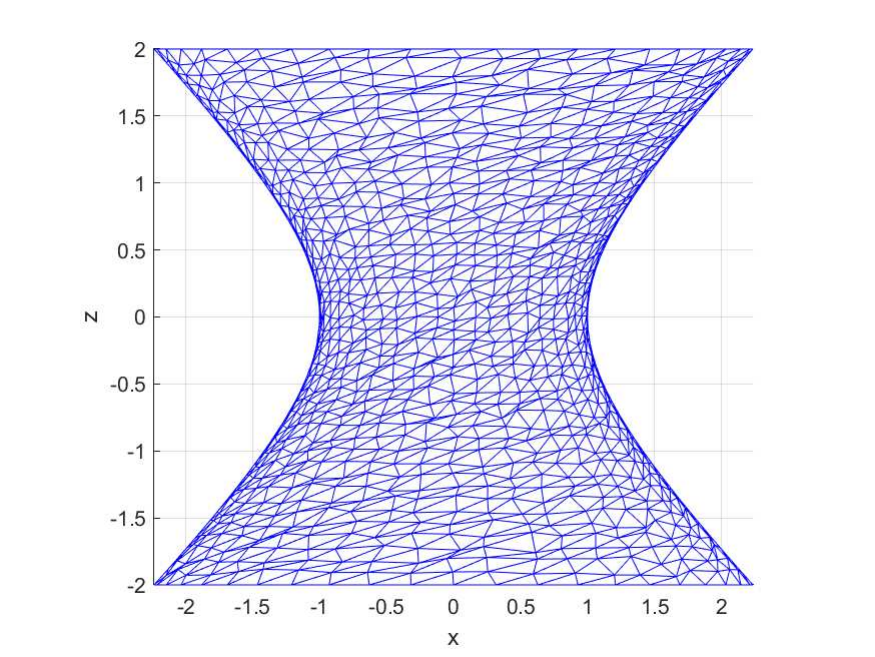}}
 \subfigure[front view of (b)]{
 \includegraphics[width=0.31\textwidth, trim=40 0 40 20,clip]{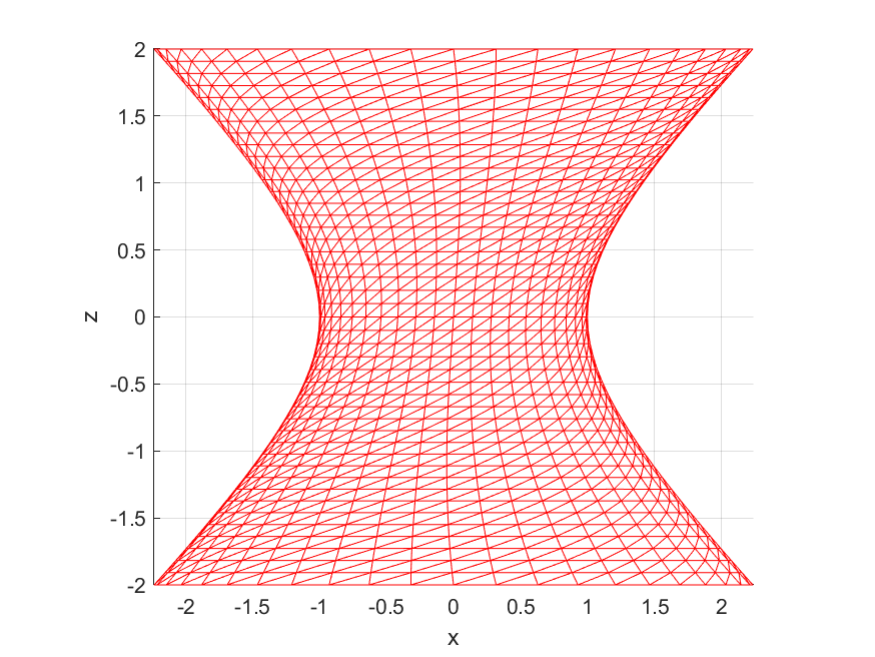}}
 \subfigure[front view of (c)]{
 \includegraphics[width=0.31\textwidth, trim=40 0 40 20,clip]{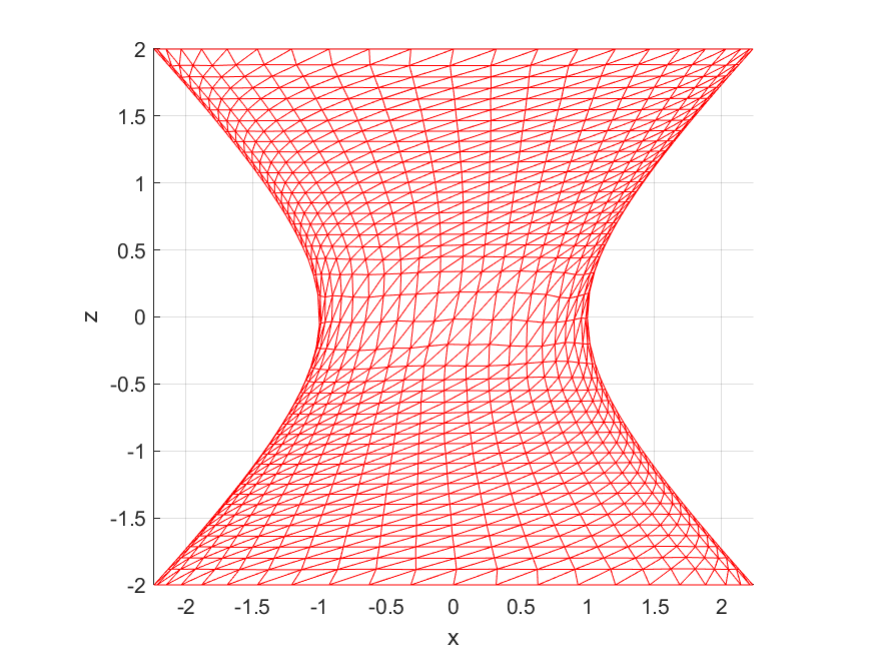}}
 \caption{{\bf Example} \ref{ex:Hyperbaloid}.
 Meshes of $N=3872$ for a 3D hyperboloid surface are obtained by the unifying MMPDE method.
 }
 \label{Fig:Hyperbaloid}
 \end{figure}

  \begin{example}\label{ex:Cavatappi}
  (3D Cavatappi surface)
\end{example}
In this example, we consider the 3D Cavatappi surface that can be expressed parametrically as
\begin{equation}
\begin{cases}
x(s,\zeta) = 3+2\cos(\frac{\pi}{35}s)
+0.1\cos(\frac{2\pi}{7}s)\cos(\frac{\pi}{30}\zeta), \\
y(s,\zeta) =  3+2\cos(\frac{\pi}{35}s)
+0.1\cos(\frac{2\pi}{7}s)\sin(\frac{\pi}{30}\zeta),\\
z(s,\zeta) =  3+2\sin(\frac{\pi}{35}s)
+0.1\sin(\frac{2\pi}{7}s) + \frac{1}{6}\zeta ,
\end{cases}
~~ s\in [0,70],~\zeta\in[0,150] .
\end{equation}
A randomly perturbed nonuniform initial mesh of $N=21000$ is given in Fig.~\ref{Fig:Cavatappi}(a).
The final meshes with the Euclidean and the curvature-based metric tensors are shown in Figs.~\ref{Fig:Cavatappi}(b) and (c), respectively.
They are almost identical since the curvature of the surface is constant and evenly distributed as expected for the metric tensors.
This example confirms that the unifying moving mesh method can work well for complex surfaces.

 \begin{figure}[H]
 \centering
 \subfigure[Initial mesh]{
 \includegraphics[width=0.31\textwidth, trim=125 0 130 20,clip]{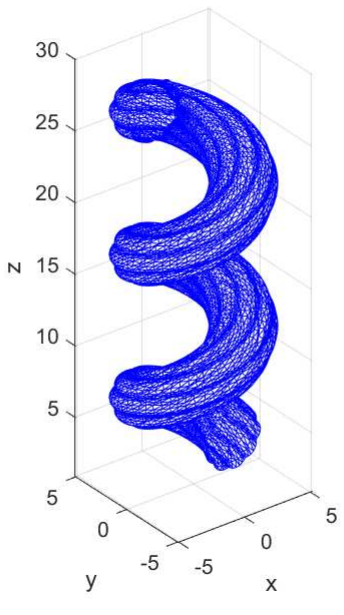}}
 \subfigure[Final mesh: $\mathbb{M} = \mathbb{I}$]{
 \includegraphics[width=0.31\textwidth, trim=125 0 130 20,clip]{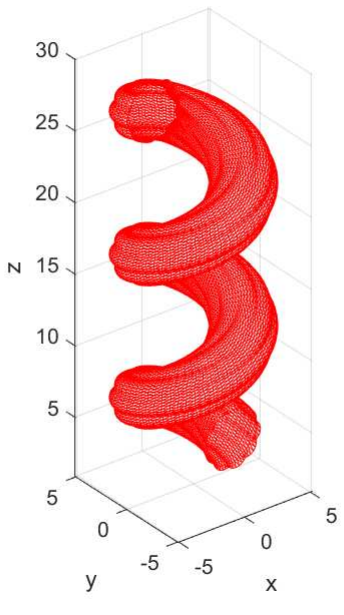}}
 \subfigure[Final mesh: $\mathbb{M} = \bar{k}\,\mathbb{I}$]{
 \includegraphics[width=0.31\textwidth, trim=125 0 130 20,clip]{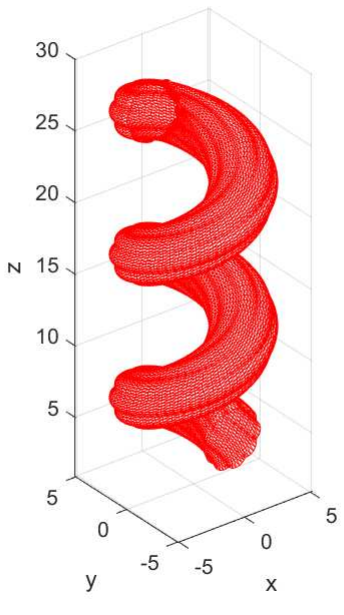}}
 \subfigure[top view of (a)]{
 \includegraphics[width=0.31\textwidth, trim=20 0 30 20,clip]{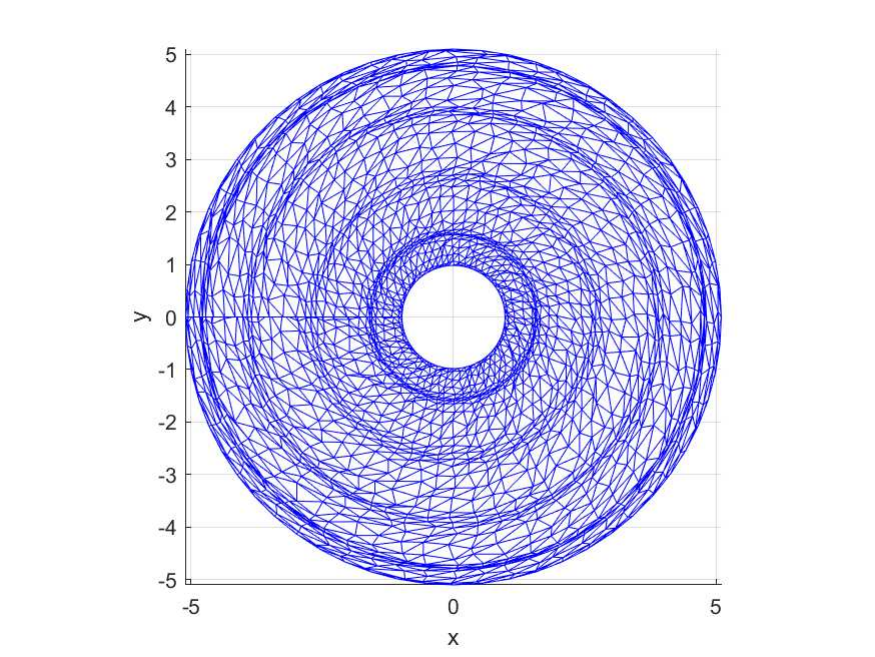}}
 \subfigure[top view of (b)]{
 \includegraphics[width=0.31\textwidth, trim=2 0 30 20,clip]{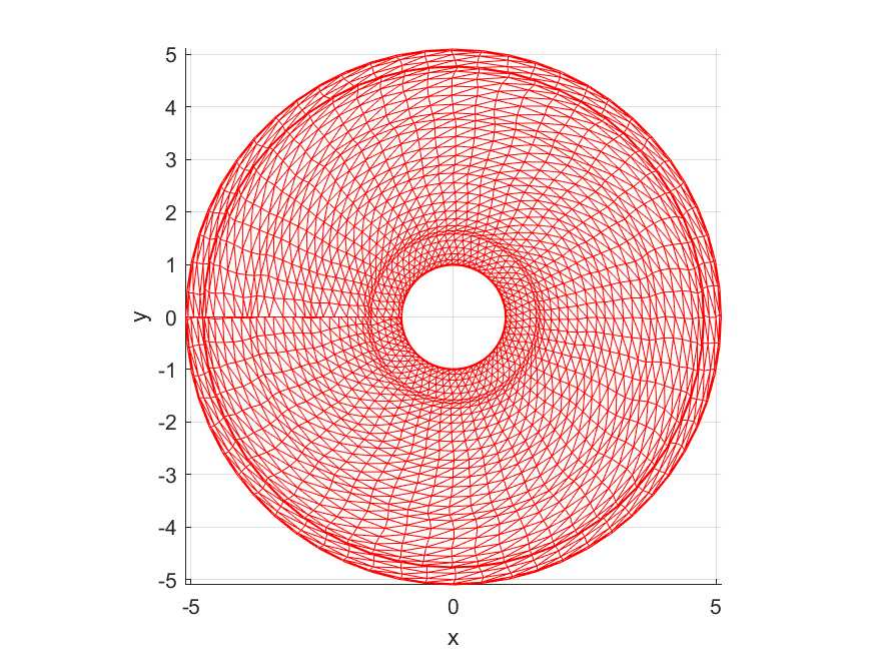}}
  \subfigure[top view of (c)]{
 \includegraphics[width=0.31\textwidth, trim=2 0 30 20,clip]{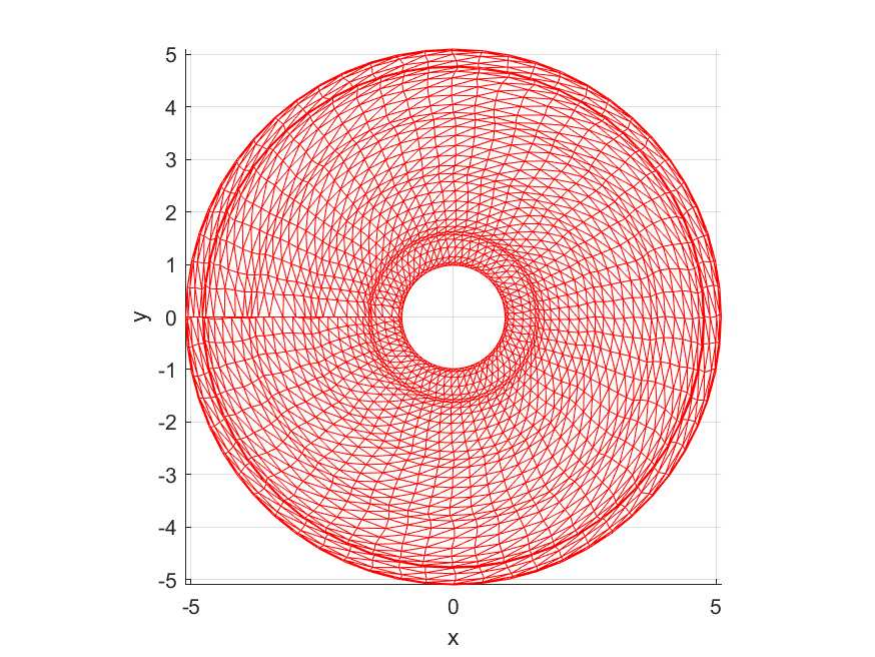}}
 \caption{{\bf Example} \ref{ex:Cavatappi}.
 Meshes of $N=21000$ for the 3D Cavatappi surface are obtained by the unifying MMPDE method.
 }
 \label{Fig:Cavatappi}
 \end{figure}

\section{Conclusions and remarks}
\label{sec:conclusion}

In the previous sections we have presented a unifying moving mesh method for a general $m$-dimensional geometry $S$
that can be a curve, a surface, or a domain in $\R^d$ ($d \ge 1$ and $1\le m \le d$). The method does not require the availability of
an analytical parametric representation of $S$.
Serval properties of edge matrices and affine mappings of general $m$-simplexes have been studied.
Based on these properties, we have established the mathematical characterization of the $m$-simplicial nonuniform meshes in the unifying form.
%of so-called equidistribution and alignment conditions.
The equidistribution and alignment conditions are used to characterize the size, shape, and orientation of the $m$-simplicial mesh and develop an energy function for mesh optimization.
The unifying moving mesh equation is defined based on the MMPDE approach, and
suitable projection of the nodal mesh velocities is employed to ensure the mesh nodes stay on $S$.
The analytical expression for the mesh velocities has been derived
in a compact matrix form by using scalar-by-matrix differentiation.
The mesh nonsingularity in adaptation and generation of our unifying moving mesh method has been proved.

The numerical results for curves ($m=1$) and surfaces ($m=2$) in $\mathbb{R}^2$ and $\mathbb{R}^3$ were presented to verify the ability of the unifying moving mesh method to move and concentrate the mesh points.
It is worth emphasizing that our unifying moving mesh method does not require the availability of an analytical parametric representation of the underlying curve/surface. Numerical approximation of
the needed information on the normal/tangent vector for $S$ (or mean curvature for the curvature-based metric tensor)
can be obtained using the initial or current mesh representation of $S$.

It is worth pointing out that we only consider both the Euclidean and curvature-based metric tensors
in the unifying moving mesh method in this work.
Studies of other Riemannian metric tensors of mesh adaptation and generation for general $m$-dimensional geometry $S$ in $\R^d$
will be an interesting research topic in the near future.
Other future work will include application of the unifying moving mesh method to the numerical solution of
geometric PDEs and PDEs defined on a general $m$-dimensional geometry $S$ in $\R^d$.

\section*{Acknowledgements}
Min Zhang was supported in part by the National Natural Science Foundation of China (Grant Number: 12301493).
The computational resources were partially supported by High-performance Computing Platform of Peking University.

\end{document}